\documentclass[12pt]{amsart}
\usepackage{amsmath,amssymb,setspace}
\usepackage{mathrsfs}
\usepackage[active]{srcltx}
\usepackage[pagebackref, colorlinks, linkcolor=red, citecolor=blue, urlcolor=blue, hypertexnames=true]{hyperref}
\usepackage[backrefs]{amsrefs}
\usepackage{enumerate}
\setlength{\textwidth}{15cm} \setlength{\textheight}{21cm}
\setlength{\oddsidemargin}{0.5cm} \setlength{\topmargin}{0cm}
\setlength{\evensidemargin}{0.5cm} \setlength{\topmargin}{0cm}
\setlength{\parindent}{0pt}

\allowdisplaybreaks
\usepackage[all,cmtip]{xy}

\usepackage{soul}
\usepackage{color,xcolor}

\newtheorem{thm}{Theorem}[section]
\newtheorem{condition}[thm]{Condition}

\newtheorem{lem}[thm]{Lemma}

\newtheorem{definition}[thm]{Definition}
\newtheorem{example}[thm]{Example}
\newtheorem{remark}[thm]{Remark}
\newtheorem{Proposition}[thm]{Proposition}

\theoremstyle{definition}

\makeatletter
\newcommand{\rmnum}[1]{\romannumeral #1}
\newcommand{\Rmnum}[1]{\expandafter\@slowromancap\romannumeral #1@}
\makeatother

\keywords{Free semigroup actions; metric mean dimension; local metric mean dimensions; skew product; irregular set}
\subjclass[2020]{Primary: 37B05; 54F45; Secondary: 37B40; 37D35.}

\begin{document}
	\title[Metric mean dimension of free semigroup actions]{Metric mean dimension of free semigroup actions for non-compact sets}
	\author[Yanjie Tang, Xiaojiang Ye and Dongkui Ma]{}
	
	\email{yjtang1994@gmail.com}
	\email{yexiaojiang12@163.com}
	\email{dkma@scut.edu.cn}

	\date{\today}
	\thanks{{$^{*}$}Corresponding author: Dongkui Ma}
	
	\maketitle
	\centerline{\scshape Yanjie Tang$^1$, Xiaojiang Ye$^1$ and Dongkui Ma$^{*1}$}
	\medskip
	{\footnotesize
		\centerline{$^1$School of Mathematics, South China University of Technology, }
		\centerline{Guangzhou 510641, P.R. China}
	} 
	
	\hspace{2mm}
	
	\begin{abstract}
		In this paper, we introduce the notions of upper metric mean dimension, $u$-upper metric mean dimension, $l$-upper metric mean dimension  of free semigroup actions for non-compact sets via Carath\'{e}odory-Pesin structure. Firstly, the lower and upper estimations of the upper metric mean dimension of free semigroup actions are obtained by local metric mean dimensions. Secondly, one proves a variational principle that relates the $u$-upper metric mean dimension of free semigroup actions for non-compact sets with the corresponding skew product transformation. Furthermore, using the variational principle above, $\varphi$-irregular set acting on free semigroup actions shows full upper metric mean dimension in the system with the gluing orbit property. Our analysis generalizes the results obtained by Carvalho et al. \cite{MR4348410}, Lima and Varandas \cite{MR4308163}.
	\end{abstract}
	
	
	\section{Introduction}
	Topological entropy is a fundamental quantity used to measure the complexity of dynamical systems. Yano in \cite{MR579700} proved  that a closed manifold of dimension at least two the topological entropy is infinite for generic homeomorphisms. It is then a natural problem to distinguish the complexity of two systems with infinite topological entropy. In the late 1990s, Gromov \cite{MR1742309} proposed a new dynamical concept of dimension that was meant to extend the usual topological dimension to broader contexts. This notion, called mean topological dimension, is a topological invariant and defined for continuous maps on compact metric spaces in terms of the growth rate of refinements of coverings of the phase space, and is hard to compute in general. Further, Lindenstrauss and Weiss \cite{MR1749670} introduced the metric mean dimension to provide nontrivial information for infinite dimensional dynamical systems of infinite topological entropy and the well-known fact that it is an upper bound of mean topological dimension. Unlike the definition of topological entropy, the metric mean dimension depends on the selection of the metric. 
	
	\hspace{4mm}
	 It has several applications which cannot be touched within the framework of topological entropy  \cite{MR1749670,MR1793417,MR3939578,MR4025517,MR3763403, MR4308163,MR3798396}.
	For instance, Lima and Varandas  in \cite{MR4308163} considered homeomorphisms homotopic to the identity on the torus and employed precisely the metric mean dimension as the finer scaling of complexity they needed to describe the multifractal aspects of the sets of points with prescribed rotation vectors.
	Recently, Lindenstrauss and Tsukamoto's pioneering work  \cite{MR3990194} connected mean dimension to some information-theoretic quantity, called {Double Variational Principle}, which is similar to the classical variational principle in dynamical systems for topological entropy. 
	
	\hspace{4mm}
	Given a continuous map $f: X \rightarrow X$ on a compact metric space $(X, d)$ and a continuous observable $\varphi: X \rightarrow \mathbb{R}^d(d \geq 1)$, the set of points with $\varphi$-irregular is
	$$
	X_{\varphi, f}:=\left\{x \in X: \lim _{n \rightarrow \infty} \frac{1}{n} \sum_{i=0}^{n-1} \varphi\left(f^i(x)\right)\text{ does not exist}
	\right\}.
	$$
	The term ‘historic behavior’ was coined after some dynamics where the phenomenon of the persistence of points with this kind of behavior occurs \cite{MR2396607, MR1858471}. The irregular set is not detectable from the measure-theoretic viewpoint as the Birkhoff’s ergodic theorem ensures the irregular set has zero measure with respect to any invariant probability measure.  However, it is an increasingly well-known phenomenon that the irregular set can be large from the point of view of dimension theory. It was first proved by Pesin and Pitskel$^\prime$ \cite{MR775933} that in the case of full shift on two symbols the set $X_{\varphi, f}$ is either empty or carries full topological entropy. Furthermore, Barreira and Schmeling \cite{MR1759398} proved that for subshifts of finite type, conformal repellers and conformal horseshoes,  the set $X_{\varphi, f}$ carries full topological entropy and full Hausdorff dimension. There are lots of advanced results to show that the irregular set can carry full entropy with specification-like, shadowing-like, see \cite{MR2158401, MR3833343,MR2931333,MR1942414}. To obtain yet another mechanism to describe the topological complexity of the set of points with historic behavior and to pave the way to multifractal analysis, Lima and Varandas \cite{MR4308163} introduced the metric mean dimension for any non-compact subset using Carath\'eodory-Pesin structure (see \cite{MR1489237}), and they proved that under the gluing orbit property, 
	\begin{center}
		if  $X_{\varphi, f}\neq\emptyset$, then $\overline{\mathrm{mdim}}_{X_{\varphi, f}}(f)=\overline{\mathrm{mdim}}_{X}(f)$,
	\end{center}
	where $\overline{\mathrm{mdim}}_{X_{\varphi, f}}(f)$ denotes the metric mean dimension of $X_{\varphi, f}$ defined by Lima and Varandas \cite{MR4308163}.
	
	\hspace{4mm}
	People have become increasingly concerned with the research of free semigroup actions in recent years. On the one hand, it is needed by some other disciplines, such as physics, to allow the system that describes the real events to readjust over time to account for the inevitable experimental errors in \cite{MR2808288}. Some dynamic system theories, on the other hand, are closely related to it, such as the case of a foliation on a manifold and a pseudo-group of holonomy maps. The geometric entropy of finitely generated pseudogroup has been introduced \cite{MR926526} and shown to be a useful tool for studying the topology and dynamics of foliated manifolds. Metric mean dimension on the whole phase space of free semigroup actions was introduced by Carvalho et al. \cite{MR4348410} which proved a variational principle that relates the metric mean dimension of the semigroup action with the corresponding notions for the associated skew product and the shift map.
	
	\hspace{4mm}
	The above results raise the question of whether similar sets exist in dynamical systems of free semigroup actions. In order to do so, we introduce the notion of metric mean dimension of a free semigroup action for non-compact subsets.
	
	\hspace{4mm}
	This paper is organized as follows. In Sect. \ref{main results}, we give our main results. In Sect. \ref{Preliminaries}, we give some preliminaries. In Sect. \ref{Upper}, by using the Carath\'{e}odory-Pesin structure we give the new definitions of the upper metric mean dimension of free semigroup actions. Several of their properties are provided. In Sect. \ref{proofs}, we give the proofs of the main results.

\section{Statement of Main Results}\label{main results}
	Let $(X,d)$ and  $(Y,d_Y)$ be compact metric spaces, $f_y:X\to X$ be a continuous self-map for all $y\in Y$. Consider the free semigroup $(G,\circ)$ with generator $G_1:=\{f_y:y\in Y\}$ where the semigroup operation $\circ$ is the composition of maps.  In what follows, we will assume that the generator set $G_1$ is minimal, meaning that no transformation $f_y$, $y\in Y$ can be expressed as a composition of the remaining generators. Let $\mathcal{Y}$ be the set of all finite words formed by the elements of $Y$, that is, $\mathcal{Y}=\bigcup_{N\in\mathbb{N}}Y^N$. Obviously, $\mathcal{Y}$ with respect to the law of composition is a free semigroup generated by elements of $Y$ as generators. 
	
	\hspace{4mm}
	For convenience, we first recall the notion of words.
	For $w\in\mathcal{Y}$, we write $w^{\prime} \leq w$ if there exists a word $w^{\prime \prime} \in \mathcal{Y}$ such that $w=w^{\prime \prime} w^{\prime}$, $|w|$ stands for the length of $w$, that is, the number of symbols in $w$. If $\omega=(i_1i_2\cdots) \in Y^{\mathbb{N}}$, and $a, b \in \mathbb{N}$ with $a \leq b$, write $\omega|_{[a, b]}=w$ if $w=i_a i_{a+1} \cdots i_b$. Notice that $\emptyset \in \mathcal{Y}$ and $\emptyset \leq w$. For $w=i_1 i_2 \cdots i_k \in \mathcal{Y}$, denote $\overline{w}=i_k \cdots i_2 i_1$. For $w \in \mathcal{Y}$, $w=i_1 \cdots i_{k}$, let us write $f_w=f_{i_1} \circ \cdots \circ f_{i_{k}}$. Note that if $k=0$, that is, $w=\emptyset$, define $f_w=\operatorname{Id}$, where Id is the identity map. Obviously, $f_{w w^{\prime}}=f_w f_{w^{\prime}}$. We set $f_w^{-1}=(f_w)^{-1}$ for $w\in\mathcal{Y}$.
	
	\hspace{4mm}
	Our first main result is an estimate of the upper metric mean dimension using local metric mean dimensions inspired by Ma and Wen \cite{MR2412786} and Ju et al. \cite{MR3918203}. Let $\mathcal{M}(X)$ denote the set of all Borel probability measures on $X$. For $x\in X$ and $w\in\mathcal{Y}$, denote $B_w(x, \varepsilon)$ the $(w, \varepsilon)$-Bowen ball at $x$. Inspired by Ju et al. \cite{MR3918203}, we introduce the concepts of lower and upper local entropies of free semigroup actions as follows. For $\mu\in\mathcal{M}(X)$, 
	$$
	h_{\mu, G}^{L^{+}}(x):=\lim _{\varepsilon \rightarrow 0}h_{\mu, G}^{L^{+}}(x,\varepsilon),
	$$
	where
	$$
	h_{\mu, G}^{L^{+}}(x,\varepsilon):=\liminf _{n \rightarrow \infty}-\frac{1}{n+1} \log \inf _{w\in Y^n}\left\{\mu\left(B_w(x, \varepsilon)\right)\right\},
	$$
	is called the $L^{+}$ lower local entropy of $\mu$ at point $x$ with respect to $G$, while the quantity
	$$
	h_{\mu, G}^{L^{-}}(x):=\lim _{\varepsilon \rightarrow 0} h_{\mu, G}^{L^{-}}(x,\varepsilon),
	$$
	where
	$$
	h_{\mu, G}^{L^{-}}(x,\varepsilon):=\liminf _{n \rightarrow \infty}-\frac{1}{n+1} \log \sup _{w\in Y^n}\left\{\mu\left(B_w(x, \varepsilon)\right)\right\},
	$$
	is called the $L^{-}$ lower local entropy of $\mu$ at point $x$ with respect to $G$.
	\begin{remark}
		If $\sharp Y=m$, that is, $G_1=\{f_0,f_1,\cdots,f_{m-1}\}$, then $h_{\mu, G}^{L^{+}}(x)$ and $h_{\mu, G}^{L^{-}}(x)$ coincide with $L^+$ and $L^{-}$ lower local entropy of $\mu$ at point $x$ with respect to $G$ respectively defined by Ju et al. \cite{MR3918203}.
		If $\sharp Y=1$, that is $G_1=\{f\}$, then $h_{\mu, G}^{L^{+}}(x)=h_{\mu, G}^{L^{-}}(x)$, i.e., the lower local entropy for $f$ defined by Brin and Katok \cite{MR730261}.
	\end{remark}
	
	\hspace{4mm}
	In order to have a concept related to the metric mean dimension, we introduced the following concepts.
	\begin{definition}
		For $\mu\in\mathcal{M}(X)$, we define the $L^{+}$ upper local metric mean dimension as
		$$
		\overline{\mathrm{mdim}}_\mu (x,G):=\limsup_{\varepsilon\to 0}\frac{h_{\mu, G}^{L^{+}}(x,\varepsilon)}{\log\frac{1}{\varepsilon}},
		$$
		and define the $L^{-}$ lower local metric mean dimension as
		$$
		\underline{\mathrm{mdim}}_\mu (x,G):=\liminf_{\varepsilon\to 0}\frac{h_{\mu, G}^{L^{-}}(x,\varepsilon)}{\log\frac{1}{\varepsilon}}.
		$$
	\end{definition}
	
	\hspace{4mm}
	Now we give two estimations about the upper metric mean dimension of free semigroup action on $Z\subseteq X$:
	\begin{thm} \label{theorem1}
		Let  $\mu$ be a Borel probability measure on $X$, $Z$ a Borel subset of $X$ and $s\in (0,\infty)$. 
		\begin{enumerate}[(i)]
			\item \label{theorem5.1}
			If $\underline{\mathrm{mdim}}_\mu (x,G)\ge s$ for all $x\in Z$ and $\mu (Z)>0$ then $\overline{\mathrm{mdim}}_Z(G,d,\mathbb{P})\ge s$.
			\item \label{theorem3}
			If $\overline{\mathrm{mdim}}_\mu (x,G)\le s$ for all $x\in Z$ then $\overline{\mathrm{mdim}}_Z(G,d,\mathbb{P})\le s$.
		\end{enumerate}
	Here $\mathbb{P}$ is a random walk on $Y^{\mathbb{N}}$, $\overline{\mathrm{mdim}}_Z(G,d,\mathbb{P})$ denotes the upper metric mean dimension of free semigroup action $G$ with respect to $\mathbb{P}$ on the set $Z$ (see Sec. \ref{Upper}).
	\end{thm}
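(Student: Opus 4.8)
The plan is to prove both inequalities \emph{one scale at a time}, passing to the limit in $\varepsilon$ only at the end. Write $h_Z(G,\varepsilon,\mathbb{P})$ for the scale-$\varepsilon$ entropy-type quantity extracted from the Carath\'eodory--Pesin structure of Section~\ref{Upper}, so that $\overline{\mathrm{mdim}}_Z(G,d,\mathbb{P})=\limsup_{\varepsilon\to 0}h_Z(G,\varepsilon,\mathbb{P})/\log\tfrac1\varepsilon$. Fixing $\delta>0$ (to be sent to $0$ at the end), I will show that $h_Z(G,\varepsilon,\mathbb{P})\ge(s-2\delta)\log\tfrac1\varepsilon$ for all small $\varepsilon$ under the hypotheses of part~(i), and $h_Z(G,\varepsilon,\mathbb{P})\le(s+2\delta)\log\tfrac1\varepsilon$ for all small $\varepsilon$ under the hypotheses of part~(ii); dividing by $\log\tfrac1\varepsilon$ and taking $\limsup_{\varepsilon\to 0}$ then yields the two conclusions. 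At each fixed scale this is the mass-distribution / Billingsley argument of Ma and Wen~\cite{MR2412786} and Ju et al.~\cite{MR3918203}, the two novelties being that the Carath\'eodory exponent is now $s\log\tfrac1\varepsilon$ rather than $s$, and that the weights in the construction involve the random walk $\mathbb{P}$ on $Y^{\mathbb{N}}$.

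For part~(i), fix $\delta>0$ and write $Z=\bigcup_{M\ge 1}Z_M$ with $Z_M:=\{x\in Z:\ h_{\mu,G}^{L^{-}}(x,\varepsilon)\ge(s-\delta)\log\tfrac1\varepsilon\ \text{for all}\ 0<\varepsilon<1/M\}$. Since $\underline{\mathrm{mdim}}_\mu(x,G)\ge s$ on $Z$, the sets $Z_M$ increase to $Z$, so $\mu(Z)>0$ forces $\mu(Z_{M_0})>0$ for some $M_0$, and we fix $\varepsilon<1/M_0$. Unwinding the $\liminf$ in $h_{\mu,G}^{L^{-}}(x,\varepsilon)$, each $x\in Z_{M_0}$ admits $N(x)$ with $\sup_{w\in Y^{n}}\mu(B_w(x,\varepsilon))\le\varepsilon^{(n+1)(s-2\delta)}$ for all $n\ge N(x)$; splitting $Z_{M_0}$ by the value of $N(x)$ and using $\mu(Z_{M_0})>0$ once more, we obtain $N_0$ and a set $W\subseteq Z_{M_0}$ with $\mu(W)>0$ on which $\mu(B_w(x,\varepsilon))\le\varepsilon^{(n+1)(s-2\delta)}$ holds for \emph{every} $n\ge N_0$ and \emph{every} $w\in Y^{n}$. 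Then, for any admissible countable cover of $W$ by Bowen balls $B_{w_i}(x_i,\varepsilon)$ with $|w_i|\ge N_0$ --- after recentering at points of $W$, which costs only a doubling of $\varepsilon$ --- we get $\mu(W)\le\sum_i\mu(B_{w_i}(x_i,\varepsilon))\le\sum_i\varepsilon^{(|w_i|+1)(s-2\delta)}$, and the right-hand side is precisely a Carath\'eodory sum with exponent $(s-2\delta)\log\tfrac1\varepsilon$. Because the mass bound is uniform over all words of each length --- hence insensitive to which words the construction happens to use --- this forces the relevant Carath\'eodory sum to stay $\ge\mu(W)>0$, whence $h_Z(G,\varepsilon,\mathbb{P})\ge(s-2\delta)\log\tfrac1\varepsilon$.

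For part~(ii), no hypothesis on $\mu(Z)$ is needed. Write $Z=\bigcup_{M\ge 1}Z'_M$ with $Z'_M:=\{x\in Z:\ h_{\mu,G}^{L^{+}}(x,\varepsilon)\le(s+\delta)\log\tfrac1\varepsilon\ \text{for all}\ 0<\varepsilon<1/M\}$, which exhausts $Z$ since $\overline{\mathrm{mdim}}_\mu(x,G)\le s$. Fix $M$ and $\varepsilon<1/M$. For $x\in Z'_M$, the $\liminf$ defining $h_{\mu,G}^{L^{+}}(x,\varepsilon)$ being $\le(s+\delta)\log\tfrac1\varepsilon$ produces \emph{infinitely many} lengths $n$ with $\inf_{w\in Y^{n}}\mu(B_w(x,\varepsilon))\ge\varepsilon^{(n+1)(s+2\delta)}$, i.e.\ at those lengths \emph{every} word of length $n$ gives a Bowen ball of $\mu$-mass $\ge\varepsilon^{(n+1)(s+2\delta)}$. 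Fixing the word sequence $\omega\in Y^{\mathbb{N}}$ along which (and against $\mathbb{P}$ on which) the structure of Section~\ref{Upper} is built, and an arbitrary $N$, cover $Z'_M$ by assigning to each $x$ an admissible length $n(x)\ge N$ and the ball $B_{\omega|_{[1,n(x)]}}(x,\varepsilon)$ (which contains $x$). For fixed $n$ the chosen balls with $n(x)=n$ all use the word $\omega|_{[1,n]}$, hence live in the single genuine metric $d_{\omega|_{[1,n]}}$, so a standard covering argument --- maximal pairwise-disjoint families in $d_{\omega|_{[1,n]}}$, as in~\cite{MR2412786,MR3918203} --- passes to a countable subfamily of disjoint $\varepsilon$-balls whose dilations by a fixed factor still cover the length-$n$ part of $Z'_M$. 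As these disjoint balls each carry $\mu$-mass $\ge\varepsilon^{(n+1)(s+2\delta)}$ and $\mu$ is a probability, there are at most $\varepsilon^{-(n+1)(s+2\delta)}$ of them for each $n$, so the Carath\'eodory sum with any exponent $t>(s+2\delta)\log\tfrac1\varepsilon$ is bounded by a convergent geometric series in $n\ge N$ that tends to $0$ as $N\to\infty$; hence $h_{Z'_M}(G,\varepsilon,\mathbb{P})\le(s+2\delta)\log\tfrac1\varepsilon$.

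Three points are delicate, all specific to the free-semigroup/random-walk framework. First, one must track how $\mathbb{P}$ enters the Carath\'eodory sums of Section~\ref{Upper}: the $\sup_{w\in Y^{n}}$ in the definition of $h_{\mu,G}^{L^{-}}$ and the $\inf_{w\in Y^{n}}$ in that of $h_{\mu,G}^{L^{+}}$ were chosen precisely so that the resulting mass estimates are uniform over all length-$n$ words, hence dominate (resp.\ are dominated by) the data along $\mathbb{P}$-almost every sequence $\omega$, and the passage to the $\mathbb{P}$-weighted (or $\mathbb{P}$-integrated) quantity is then a Fubini-type step --- this is exactly why part~(i) uses the $\sup$-over-words local entropy and part~(ii) the $\inf$-over-words one. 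Second, in part~(ii) the covering bookkeeping must run uniformly over the infinitely many admissible lengths $n(x)$ at once, and with $Y$ --- hence each $Y^{n}$ --- possibly infinite, so care is needed to keep the extracted subfamilies countable and their weighted sums under control. The third and main obstacle is the assembly step in part~(ii): passing from the per-piece, per-scale bounds $h_{Z'_M}(G,\varepsilon,\mathbb{P})\le(s+2\delta)\log\tfrac1\varepsilon$, valid only for $\varepsilon<1/M$, to the single bound $\overline{\mathrm{mdim}}_Z(G,d,\mathbb{P})\le s$ requires controlling the interchange of the countable union $\bigcup_M$ with $\limsup_{\varepsilon\to 0}$; this is where one must invoke the stability and monotonicity properties of the Carath\'eodory construction established in Section~\ref{Upper}, and it is the point at which the argument genuinely departs from the topological-entropy case of~\cite{MR2412786,MR3918203}.
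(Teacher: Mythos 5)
Your proposal is correct and follows essentially the same scheme as the paper: exhaust $Z$ by subsets $Z_k$ on which the local mass estimate holds uniformly for $\varepsilon\in(0,1/k)$, run a mass-distribution argument at each fixed scale (a lower bound on the Carath\'eodory sum via $\mu(W)>0$ in part (i); a Vitali-type covering plus disjointness and $\mu(X)=1$ in part (ii)), divide by $\log\frac{1}{\varepsilon}$ and take $\limsup_{\varepsilon\to 0}$, and finish with countable stability of $\overline{\mathrm{mdim}}$ (Proposition~\ref{prop1}(\ref{item2})). You also correctly pair the $\sup$-over-words entropy $h_{\mu,G}^{L^-}$ with part (i) and the $\inf$-over-words $h_{\mu,G}^{L^+}$ with part (ii), and correctly observe that this word-uniformity is precisely what makes the $\mathbb{P}$-integration a non-issue.

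The one substantive place you depart from the paper is the covering step of part (ii), and your variant is cleaner. The paper assigns to each $x\in Z_k$ its own sequence $\omega_x$, assembles the single family $\mathcal{F}_w=\{B_{\omega_x|_{[1,n_j^x]}}(x,\varepsilon)\}$, and applies the Bowen-ball covering Lemma~\ref{coverlemma} across all words and lengths at once; there the ``triple the selected ball'' step must compare the Bowen metrics $d_{w_1}$ and $d_{w_2}$ of two balls whose words share only the prefix $w$ and are otherwise unrelated, which is a delicate point (domination $d_{w_1}\le d_{w_2}$ holds only when $w_1$ is a prefix of $w_2$). You instead exploit the fact that the $\inf$-over-words mass bound is uniform in the word to fix \emph{one} common $\omega$, so that all chosen balls of a given length $n$ carry the same word $\omega|_{[1,n]}$ and live in the single genuine metric $d_{\omega|_{[1,n]}}$; the ordinary Vitali lemma then applies length by length, the disjoint balls are at most $\varepsilon^{-(n+1)(s+2\delta)}$ in number, and the Carath\'eodory sum is dominated by a geometric series in $n\ge N$. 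This sidesteps the metric-comparability issue entirely, at the mild cost of per-length bookkeeping. Finally, your flag on the ``assembly'' step is apt: passing from $\overline{\mathrm{mdim}}_{Z_k}\le s$ for every $k$ to $\overline{\mathrm{mdim}}_{Z}\le s$ uses countable stability of the $\limsup_{\varepsilon\to 0}$ quantity, not merely of the fixed-$\varepsilon$ Carath\'eodory dimension; the paper delegates this to Proposition~\ref{prop1}(\ref{item2}), and you quite reasonably point to the same source, though describing it as an interchange of $\bigcup_M$ with $\limsup_{\varepsilon\to 0}$ is slightly off --- what is actually used is the equality $\overline{\mathrm{mdim}}_{\bigcup_k Z_k}=\sup_k\overline{\mathrm{mdim}}_{Z_k}$ applied after the $\varepsilon$-limit has already been taken for each $k$.
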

	
	\hspace{4mm}
	Next, the second result describes a variational principle that relates the metric mean dimension of the semigroup action for non-compact sets with the corresponding notions for the associated skew product on $Y^\mathrm{N}\times X$, and compares them with the upper box dimension of $Y$. For $\nu\in \mathcal{M}(Y)$, denote $\mathrm{supp}\nu$ the support of $\nu$ on $Y$. Let $F: Y^\mathrm{N}\times X\to Y^\mathrm{N}\times X$ be the skew product transformation, $D$ be the product metric on $Y^\mathrm{N}\times X$. $\overline{\mathrm{dim}}_BY$ denotes the upper box dimension of $(Y,d_Y)$,  $\mathcal{H}_Y$ the set of such homogeneous Borel probability measures on $Y$, $\overline{\mathrm{umdim}}_{Y^{\mathbb{N}} \times Z}\left(F, D\right)$ the $u$-upper metric mean dimension with 0 potential of $F$  on the set $Y^{\mathbb{N}} \times Z$ (see \cite{MR4216094}). Then we have the following theorem:
	\begin{thm}\label{theorem4}
		For any subset $Z\subset X$, if $\overline{\mathrm{dim}}_BY<\infty$ and $\nu\in \mathcal{H}_Y$, then
		\begin{enumerate}[(i)]
			\item \label{theorem4-1}
			$\overline{\mathrm{dim}}_B(\mathrm{supp}\nu)+\overline{\mathrm{umdim}}_Z \left(G, d, \nu ^\mathbb{N}\right) \le \overline{\mathrm{umdim}}_{Y^{\mathbb{N}}\times Z}\left( F, D\right)$;
			\item \label{theorem4-2}
			if, $\mathrm{supp} \nu=Y$,
			\begin{equation}\label{equation5}
				\overline{\mathrm{dim}}_B Y+\overline{\mathrm{umdim}}_Z \left(G, d, \nu ^\mathbb{N} \right)= \overline{\mathrm{umdim}}_{Y^{\mathbb{N}} \times Z}\left(F, D\right).
			\end{equation}
		\end{enumerate}
	Here $\nu^\mathbb{N}$ denotes the product measure on $Y^\mathbb{N}$,  $\overline{\mathrm{umdim}}_Z \left(G, d, \nu ^\mathbb{N} \right)$ denotes the $u$-upper metric mean dimension of free semigroup action $G$ with respect to $\nu ^\mathbb{N}$ on the set $Z$ (see Sec. \ref{Upper}).
	\end{thm}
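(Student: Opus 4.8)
The plan is to peel back the definitions from Section~\ref{Upper} of $\overline{\mathrm{umdim}}_Z(G,d,\nu^{\mathbb{N}})$ and $\overline{\mathrm{umdim}}_{Y^{\mathbb{N}}\times Z}(F,D)$ to their underlying Carath\'eodory--Pesin (CP) structures, and to compare the associated CP ``entropy-like'' quantities scale by scale, \emph{before} applying the normalising limit $\limsup_{\varepsilon\to0}(\,\cdot\,)/\log(1/\varepsilon)$. Write $F(\omega,x)=(\sigma\omega,f_{\omega_1}(x))$ for the skew product, so that $\pi_X F^{n}(\omega,x)=f_{\overline{\omega|_{[1,n]}}}(x)$. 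Since $D$ is the product metric and $(y,x)\mapsto f_y(x)$ is jointly continuous on the compact space $Y\times X$ (equivalently, $F$ is continuous), the fibre dynamics are equicontinuous along the base coordinate: there is a modulus $\rho(\varepsilon)\downarrow0$ with $\log(1/\rho(\varepsilon))\sim\log(1/\varepsilon)$ such that
\[
B^{\sigma}_{n}\bigl(\omega,\rho(\varepsilon)\bigr)\times B_{\overline{\omega|_{[1,n]}}}\bigl(x,\rho(\varepsilon)\bigr)\ \subseteq\ B^{F,D}_{n}\bigl((\omega,x),\varepsilon\bigr)\ \subseteq\ B^{\sigma}_{n}(\omega,\varepsilon)\times B_{\overline{\omega|_{[1,n]}}}(x,\varepsilon),
\]
where $B^{\sigma}_{n}$ is the $(\sigma,n,\cdot)$-Bowen ball on $Y^{\mathbb{N}}$ and $B_{w}(x,\cdot)$ the $(w,\cdot)$-Bowen ball of the semigroup action. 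This sandwich is the bridge that converts a covering of $Y^{\mathbb{N}}\times Z$ by $F$-Bowen balls into (essentially) a product of a covering of $Y^{\mathbb{N}}$ by shift Bowen balls with, fibrewise, coverings of $Z$ by semigroup Bowen balls, and conversely.

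Next I would control the base contribution using $\nu\in\mathcal{H}_Y$. Homogeneity of $\nu$ makes the $\nu^{\mathbb{N}}$-weighted CP sums over $(\sigma,n,\varepsilon)$-Bowen balls of $Y^{\mathbb{N}}$ comparable, with constants independent of $n$, to the purely metric $\varepsilon$-covering numbers; and the $\varepsilon$-covering number $N_\varepsilon(\mathrm{supp}\,\nu)$ raised to the power $n$ controls, up to sub-exponential factors, the number of $(\sigma,n,\varepsilon)$-Bowen balls meeting $\mathrm{supp}(\nu^{\mathbb{N}})$. Hence $\tfrac1n\log(\text{number of base Bowen balls})=\log N_\varepsilon(\mathrm{supp}\,\nu)+o_n(1)$, and dividing by $\log(1/\varepsilon)$ and letting $\varepsilon\to0$ yields exactly $\overline{\mathrm{dim}}_B(\mathrm{supp}\,\nu)$; under $\mathrm{supp}\,\nu=Y$ this is $\overline{\mathrm{dim}}_B Y$. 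The hypothesis $\overline{\mathrm{dim}}_B Y<\infty$ ensures all these quantities are finite, so the arithmetic of the limits is legitimate.

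For \eqref{theorem4-1} I would show the skew-product CP quantity dominates the sum: fix $\varepsilon$, take an almost optimal covering family of $Y^{\mathbb{N}}\times Z$ by $(F,D,\varepsilon)$-Bowen balls, and slice it over the $\gtrsim N_\varepsilon(\mathrm{supp}\,\nu)^{n}$ distinct base Bowen balls it induces; on each slice the family must cover $Z$ by $(\overline{\omega|_{[1,n]}},\varepsilon)$-Bowen balls of the semigroup action, so the CP weight of the $F$-family is at least (base count)$\times$(CP weight of a $G$-covering of $Z$ at scale $\varepsilon$). Passing to CP critical exponents, dividing by $\log(1/\varepsilon)$ and taking $\limsup_{\varepsilon\to0}$ gives $\overline{\mathrm{dim}}_B(\mathrm{supp}\,\nu)+\overline{\mathrm{umdim}}_Z(G,d,\nu^{\mathbb{N}})\le\overline{\mathrm{umdim}}_{Y^{\mathbb{N}}\times Z}(F,D)$. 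For the reverse inequality in \eqref{theorem4-2}, where $\mathrm{supp}\,\nu=Y$, I would take an almost optimal covering of $Y^{\mathbb{N}}$ by $(\sigma,n,\rho(\varepsilon))$-Bowen balls and, above each, an almost optimal covering of $Z$ by $(w,\rho(\varepsilon))$-Bowen balls; by the first inclusion in the sandwich their products cover $Y^{\mathbb{N}}\times Z$, the CP weights multiply, and the same normalisation yields $\overline{\mathrm{umdim}}_{Y^{\mathbb{N}}\times Z}(F,D)\le\overline{\mathrm{dim}}_B Y+\overline{\mathrm{umdim}}_Z(G,d,\nu^{\mathbb{N}})$, hence \eqref{equation5}.

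The main obstacle I anticipate is making the Bowen-ball sandwich of the first step quantitatively uniform in \emph{both} $n$ and $\varepsilon$: because the fibre maps $f_{\overline{\omega|_{[1,n]}}}$ depend on the word, a base Bowen ball pins down the fibre dynamics only up to the modulus $\rho(\varepsilon)$, and one must verify that the resulting distortion costs only a multiplicative constant in the scale --- so that $\log(1/\rho(\varepsilon))\sim\log(1/\varepsilon)$ and nothing survives after dividing by $\log(1/\varepsilon)$ --- rather than an error that grows with $n$; the equicontinuity furnished by compactness of $Y\times X$ and continuity of $F$ is precisely what is needed. The second delicate point is the homogeneity bookkeeping: keeping the constants that relate $\nu^{\mathbb{N}}$-weighted CP sums to metric covering numbers independent of $n$ is where $\nu\in\mathcal{H}_Y$ enters, and the sharpening from the $\mathrm{supp}\,\nu$-bound in \eqref{theorem4-1} to the exact value $\overline{\mathrm{dim}}_B Y$ in \eqref{theorem4-2} is exactly what the full-support hypothesis buys. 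In effect these are the free-semigroup/CP analogues of the estimates of Carvalho et al. \cite{MR4348410} together with the $u$-metric-mean-dimension formalism of \cite{MR4216094}, and I would organise the write-up so that their compact-phase-space arguments are promoted to the Carath\'eodory--Pesin setting by systematically replacing spanning-set cardinalities with CP covering sums.
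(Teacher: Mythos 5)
The paper's proof is essentially a two-step reduction: first it identifies the Carath\'eodory--Pesin quantity with the integrated separated-set count,
$\overline{\mathrm{umdim}}_Z(G,d,\mathbb{P})=\limsup_{\varepsilon\to0}\limsup_{n\to\infty}\bigl(\log\int_{Y^{\mathbb{N}}}s(\omega|_{[1,n]},\varepsilon,Z,G)\,d\mathbb{P}(\omega)\bigr)/\bigl(n\log\tfrac1\varepsilon\bigr)$,
via $\Lambda^B_w(Z,n,\varepsilon,G,d)=r(w,\varepsilon,Z,G)$; second it observes that with this identity Carvalho et al.'s whole-space argument \cite{MR4348410} goes through verbatim with $X$ replaced by $Z$, and simply states the two lemmas with the proofs omitted. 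Your proposal, by contrast, tries to compare CP covering sums directly using a Bowen-ball ``sandwich.''

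That sandwich is where your argument breaks. Both inclusions you assert, and in particular the left one
$B^{\sigma}_{n}(\omega,\rho(\varepsilon))\times B_{\overline{\omega|_{[1,n]}}}(x,\rho(\varepsilon))\subseteq B^{F,D}_{n}((\omega,x),\varepsilon)$,
are false in general: the $F$-Bowen condition at time $i$ compares $f_{\overline{\omega|_{[1,i]}}}(x)$ with $f_{\overline{\omega'|_{[1,i]}}}(x')$, so the fibre orbit of $x'$ is driven by the \emph{perturbed} word $\omega'$, not by $\omega$. Equicontinuity of $(y,x)\mapsto f_y(x)$ controls one application of the map, but after $i$ compositions the discrepancy between $f_{\overline{\omega|_{[1,i]}}}$ and $f_{\overline{\omega'|_{[1,i]}}}$ can compound (think of each $f_y$ Lipschitz with constant $L>1$: the error grows like $L^{i}$), so no modulus $\rho(\varepsilon)$ independent of $n$ can make either inclusion hold. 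This is exactly the worry you flagged, but invoking compactness and joint continuity does not resolve it --- it only gives a per-step modulus, not a uniform-in-$n$ bound. The correct tool is the one you in fact gesture at later without noticing it bypasses the sandwich: slice a $D$-Bowen ball $B^{F,D}_n((\omega,x),\varepsilon)$ over a \emph{fixed} base point $\omega'$. The slice $\{x':(\omega',x')\in B^{F,D}_n((\omega,x),\varepsilon)\}$ involves only the single word $\omega'|_{[1,n]}$ in the fibre direction, so it is contained in a $(\omega'|_{[1,n]},2\varepsilon)$-$G$-Bowen ball (triangle inequality, no compounding), and conversely the product of $\sigma$-separated base points with $G$-separated fibre points is $F$-separated with no error term at all. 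These fibred estimates, combined with the homogeneity of $\nu$ to control the base count, are precisely the content of the two lemmas the paper cites. If you restate your first step in terms of this slicing and drop the sandwich, your outline becomes a correct (if more detailed) version of the paper's argument.
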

	\begin{remark}
		\item [(\rmnum{1})]
		If $Z=X$, Theorem \ref{theorem4} generalizes the result obtained by Carvalho et al. \cite{MR4348410}.
		\item [(\rmnum{2})]
		If $\sharp Y=m$ and $\mathbb{P}$ is generated by the probability vector $\nu:=\left(p_1, \cdots, p_m\right)$ with $\sum_{i=1}^mp_i=1$ and $p_i >0$ for all $i=1,\cdots,m$, it follows from Theorem \ref{theorem4} that 
		$$
		\overline{\mathrm{umdim}}_Z \left(G, d, \nu ^\mathbb{N}\right)=\overline{\mathrm{umdim}}_{Y^{\mathbb{N}} \times Z}\left(F, D\right).
		$$
	\end{remark}

	\hspace{4mm}
	Finally, the third result is that $\varphi$-irregular set of free semigroup actions carries full upper metric mean dimension using Theorem \ref{theorem4}. The irregular set arises in the context of multifractal analysis. As a consequence of Birkhoff’s ergodic theorem, the irregular set is not detectable from the point of view of an invariant measure.
           Let  $\varphi: X \rightarrow \mathbb{R}^d$ be a continuous function. Recall that a point $x \in X$ is called to be $\varphi$-irregular point of free semigroup action $G$ if there exists $\omega\in Y^{\mathbb{N}}$, the limit $\lim _{n \rightarrow \infty} \frac{1}{n} \sum_{j=0}^{n-1} \varphi\left(f_{\overline{\omega|_{[1,j]}}}(x)\right)$ does not exist, which was introduced by Zhu and Ma \cite{MR4200965}. Let $I_\varphi(G)$ denote the set of all $\varphi$-irregular points of free semigroup action, that is,
	$$
	I_\varphi(G):=\left\{x\in X: \lim_{n\to\infty}\frac{1}{n}\sum_{j=0}^{n-1}\varphi\left(f_{\overline{\omega|_{[1,j]}}}(x)\right)\text{ does not exist for some }\omega\in Y^{\mathbb{N}}\right\}.
	$$
	\begin{thm}\label{theorem5}
		Suppose that $G$ has the gluing orbit property,  $\overline{\mathrm{dim}}_B Y<\infty$ and $\nu$ is a homogeneous measure with $\mathrm{supp} \nu=Y$. Let $\varphi: X \rightarrow \mathbb{R}^d$ be a continuous function. If $I_\varphi(G)\neq\emptyset$, then
		$$
		\overline{\mathrm{umdim}}_{ I_{\varphi}(G)}(G,d,\mathbb{P})=\overline{\mathrm{mdim}}_M(X,G,d, \mathbb{P})=  \overline{\mathrm{umdim}}_{ X}(G,d,\mathbb{P}),
		$$
		where $\overline{\mathrm{mdim}}_M(X,G,d, \mathbb{P})$ denotes the upper metric mean dimension of free semigroup action $G$ with respect to $\mathbb{P}$ on the whole phase defined by Carvalho et al. \cite{MR4348410}.
	\end{thm}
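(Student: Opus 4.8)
The plan is to push the statement forward to the associated skew product $F$ on $Y^{\mathbb{N}}\times X$ by means of the variational principle of Theorem~\ref{theorem4}, and then to apply the single-transformation theorem of Lima and Varandas \cite{MR4308163} to $F$ (recall that here $\mathbb{P}=\nu^{\mathbb{N}}$). First I would dispose of the easy half: by the basic properties of the upper metric mean dimension established in Section~\ref{Upper} (in particular, that the Carath\'eodory--Pesin description on the whole phase space agrees with the definition of Carvalho et al.) one has $\overline{\mathrm{mdim}}_M(X,G,d,\mathbb{P})=\overline{\mathrm{umdim}}_X(G,d,\mathbb{P})$, and since $I_\varphi(G)\subseteq X$, monotonicity in the set gives $\overline{\mathrm{umdim}}_{I_\varphi(G)}(G,d,\mathbb{P})\le\overline{\mathrm{umdim}}_X(G,d,\mathbb{P})$. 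So the whole statement reduces to the reverse inequality $\overline{\mathrm{umdim}}_{I_\varphi(G)}(G,d,\nu^{\mathbb{N}})\ge\overline{\mathrm{umdim}}_X(G,d,\nu^{\mathbb{N}})$.

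Next I would use the hypotheses $\overline{\mathrm{dim}}_BY<\infty$ and $\mathrm{supp}\,\nu=Y$ to invoke Theorem~\ref{theorem4}(\ref{theorem4-2}) for $Z=I_\varphi(G)$ and for $Z=X$; cancelling the common finite term $\overline{\mathrm{dim}}_BY$ shows that the desired inequality is equivalent to
$$\overline{\mathrm{umdim}}_{Y^{\mathbb{N}}\times I_\varphi(G)}(F,D)\ \ge\ \overline{\mathrm{umdim}}_{Y^{\mathbb{N}}\times X}(F,D).$$
Now introduce the continuous observable $\tilde\varphi\colon Y^{\mathbb{N}}\times X\to\mathbb{R}^d$, $\tilde\varphi(\omega,x):=\varphi(x)$. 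Writing $F(\omega,x)=(\sigma\omega,f_{\omega_1}(x))$ and unwinding the composition and reversal conventions, one checks that the $X$-coordinate of $F^j(\omega,x)$ equals $f_{\overline{\omega|_{[1,j]}}}(x)$, so $\frac1n\sum_{j=0}^{n-1}\tilde\varphi(F^j(\omega,x))=\frac1n\sum_{j=0}^{n-1}\varphi\bigl(f_{\overline{\omega|_{[1,j]}}}(x)\bigr)$. Consequently the classical $\tilde\varphi$-irregular set of $F$ satisfies $I_{\tilde\varphi}(F)\subseteq Y^{\mathbb{N}}\times I_\varphi(G)$, and $I_{\tilde\varphi}(F)\neq\emptyset$ since $I_\varphi(G)\neq\emptyset$ (any $(\omega,x)$ for which $x$ is $\varphi$-irregular along $\omega$ lies in $I_{\tilde\varphi}(F)$). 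Hence, by monotonicity, $\overline{\mathrm{umdim}}_{Y^{\mathbb{N}}\times I_\varphi(G)}(F,D)\ge\overline{\mathrm{umdim}}_{I_{\tilde\varphi}(F)}(F,D)$, and it suffices to prove
$$\overline{\mathrm{umdim}}_{I_{\tilde\varphi}(F)}(F,D)=\overline{\mathrm{umdim}}_{Y^{\mathbb{N}}\times X}(F,D).$$

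This last equality is exactly the Lima--Varandas theorem for the single continuous map $F$ on the compact metric space $(Y^{\mathbb{N}}\times X,D)$: the $u$-upper metric mean dimension with $0$ potential coincides, both on $Y^{\mathbb{N}}\times X$ and on $I_{\tilde\varphi}(F)$, with the Lima--Varandas metric mean dimension for non-compact sets (Section~\ref{Upper}); we have $I_{\tilde\varphi}(F)\neq\emptyset$; and provided $F$ has the gluing orbit property, \cite{MR4308163} yields that $I_{\tilde\varphi}(F)$ carries the full metric mean dimension of $F$. Reading the chain of (in)equalities above backwards then produces $\overline{\mathrm{umdim}}_{I_\varphi(G)}(G,d,\mathbb{P})=\overline{\mathrm{umdim}}_X(G,d,\mathbb{P})=\overline{\mathrm{mdim}}_M(X,G,d,\mathbb{P})$, as desired.

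The hard part is the one remaining step, which I expect to be the main obstacle: showing that \emph{the gluing orbit property of the free semigroup action $G$ is inherited by the skew product $F$}. Given prescribed orbit segments $(\omega^{(1)},x_1),\dots,(\omega^{(k)},x_k)$ with times $n_1,\dots,n_k$, the $X$-coordinate of a shadowing orbit is produced by the gluing orbit property of $G$, and the words it furnishes for the segments, together with the gap words, concatenate into a single $\omega\in Y^{\mathbb{N}}$; the $Y^{\mathbb{N}}$-coordinate is then automatically controlled, because in the product metric $D$ being close to the shift-orbit of $\omega^{(i)}$ along the $i$-th segment only requires $\omega$ to agree with $\omega^{(i)}$ on a sufficiently long block, which is precisely how $\omega$ was constructed. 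The point that needs care is to keep the gap length uniform in $\varepsilon$: one takes the maximum of the gap function of $G$ and the length of the prefix of a $Y$-sequence needed to force $D$-closeness at the prescribed scale. Once this lemma is in place, the argument above closes.
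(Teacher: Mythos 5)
Your proposal is correct and follows essentially the same route as the paper: pass to the skew product $F$ via Theorem~\ref{theorem4}, observe that $I_{\tilde\varphi}(F)\subseteq Y^{\mathbb{N}}\times I_\varphi(G)$ for the lifted observable, establish (as the key lemma, Lemma~\ref{lemma1} in the paper) that $F$ inherits the gluing orbit property from $G$ by extending each segment by $\lceil -\log\delta\rceil$ symbols and enlarging the gap function accordingly, and invoke Lima--Varandas for $F$. One small caveat: on $I_{\tilde\varphi}(F)$ you do not actually know that $\overline{\mathrm{umdim}}$ agrees with the Lima--Varandas quantity (that equality is only established on the whole space or compact invariant sets), but the argument only needs the one-sided bound $\overline{\mathrm{mdim}}_Z\le\overline{\mathrm{umdim}}_Z$ from Proposition~\ref{prop1}(\ref{item3}), which is exactly what the paper uses.
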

\begin{remark}
	When $\sharp Y=1$, that is , $G_1=\{f\}$, Theorem \ref{theorem5} coincides with the result obtained by Lima and Varandas \cite{MR4308163}.
\end{remark}

	\section{Preliminaries}\label{Preliminaries}
	We start recalling the main concepts we use and describing the systems we will work with.
	
\subsection{Metric mean dimension on the whole space of free semigroup actions }\label{section2.1}
Let $(X,d)$ and  $(Y,d_Y)$ be compact metric spaces, $f_y:X\to X$ be a continuous self-map for all $y\in Y$, $G$ be the free semigroup acting on $X$ generated by $G_1=\{f_y:y\in Y\}$.
	
	\hspace{4mm}
	A random walk $\mathbb{P}$ on $Y^{\mathbb{N}}$ is a Borel probability measure in this space of sequences which is invariant by the shift map $\sigma$. For instance, we may consider a finite subset $E=\left\{y_0,y_1, \cdots, y_{m-1}\right\}\subset Y$, a probability vector $\mathbf{p}:=(p_0,p_1,\cdots, p_{m-1})$ with $p_i>0$ and $\sum_{i=0}^{m-1} p_i=1$, the probability measure $\nu:=\sum_{i=0}^{m-1} p_i \delta_{y_i}$ on $E$ and the Borel product measure $\nu^{\mathbb{N}}$ on $Y^{\mathbb{N}}$. Such Borel product measure $\nu^{\mathbb{N}}$ will be called a Bernoulli measure, which is said to be symmetric if $p_i=(1 / k)$ for every $i \in\{0,1, \ldots, m-1\}$. If $Y$ is a Lie group, a natural symmetric random walk is given by $\nu^{\mathbb{N}}$, where $\nu$ is the Haar measure. 
	
	\hspace{4mm}
	For $w \in \mathcal{Y}$, we assign a metric $d_w$ on $X$ by setting
	$$
	d_w\left(x_1, x_2\right):=\max _{w^{\prime} \leq \overline{w}} d\left(f_{w^{\prime}}\left(x_1\right), f_{w^{\prime}}\left(x_2\right)\right).
	$$
	Given a number $\delta>0$ and a point $x \in X$, define the $(w, \delta)$-Bowen ball at $x$ by
	$$
	B_w(x, \delta):=\left\{y \in X: d_w(x, y)<\delta\right\}.
	$$
	
	\hspace{4mm}
	Restate that the separated set and spanning set of free semigroup actions were introduced by Bufetov \cite{MR1681003}. For $Z\subset X$, $w\in \mathcal{Y}$ and $\varepsilon>0$, a subset $K\subset Z$ is called a $(w, \varepsilon, Z, G)$-separated set of $Z$ if, for any $x_1, x_2 \in K$ with $x_1 \neq x_2$, one has $d_w\left(x_1, x_2\right) > \varepsilon$. The maximum cardinality of a $(w, \varepsilon, Z, G)$-separated subset of $Z$ is denoted by $s(w, \varepsilon,Z, G)$. A subset $E \subset Z$ is said to be $(w, \varepsilon,G)$-spanning set of $Z$ if for every $x \in X$ there is $y \in E$ such that $d_w(x,y)\le\varepsilon$. The smallest cardinality of any $(w, \varepsilon, Z, G)$-spanning subset of $Z$ is denoted by $r(w, \varepsilon,Z, G)$.
	
	\hspace{4mm}
	Obviously, 
	\begin{equation}\label{equation3}
		r(w,\varepsilon,Z, G)\le s(w,\varepsilon,Z, G)\le r(w,\frac{\varepsilon}{2},Z, G).
	\end{equation}
	
	\hspace{4mm}
	We recall the definition of metric mean dimension on the whole phase space of free semigroup actions introduced by Carvalho et al. \cite{MR4348410}. Let $\mathbb{P}$ be a product measure on $Y^{\mathbb{N}}$ generated by Borel probability measure $\nu$ supported on $Y$.
	The topological entropy of free semigroup action $G$ is given by
	$$
	h_X(G,\mathbb{P}):=\lim_{\varepsilon\to 0}\limsup_{n\to\infty}\frac{1}{n} \log \int_{Y^{\mathbb{N}}} s\left(\omega|_{[1,n]}, \varepsilon,X, G\right) d \mathbb{P}(\omega),
	$$
	which was introduced by Carvalho et al. \cite{MR3784991}.  
	\begin{remark}
		If  $\sharp Y=m$ and the probability measure $\mathbb{P}$ is generated by $(\frac{1}{m},\cdots,\frac{1}{m})$, then $h_X(G,\mathbb{P})$ coincides with the definition of topological entropy introduced by Bufetov \cite{MR1681003}.
	\end{remark}
	
	\begin{definition}\cite{MR4348410}
		The upper and lower metric mean dimensions of the free semigroup action $G$ with respect to $\mathbb{P}$ are given respectively by
		$$
		\begin{aligned}
			& \overline{\operatorname{mdim}}_M(X, G, d, \mathbb{P})=\limsup _{\varepsilon \rightarrow 0} \frac{h(X, G, \mathbb{P}, \varepsilon)}{-\log \varepsilon}, \\
			& \underline{\operatorname{mdim}}_M(X, G, d, \mathbb{P})=\liminf _{\varepsilon \rightarrow 0} \frac{h(X, G, \mathbb{P}, \varepsilon)}{-\log \varepsilon},
		\end{aligned}
		$$
		where
		$$
		h(X, G, \mathbb{P}, \varepsilon)=\limsup _{n \rightarrow \infty} \frac{1}{n} \log \int_{Y^{\mathbb{N}}} s\left(\omega|_{[1,n]}, \varepsilon,X, G\right) d \mathbb{P}(\omega).
		$$
	\end{definition}
	\begin{remark}
		If $\sharp Y=1$,  these definitions coincide with the upper and lower metric mean dimensions of a single map on the whole phase defined by Lindenstrauss and Weiss \cite{MR1749670}.
	\end{remark}
	
	\hspace{4mm}
	The dynamical systems given by free semigroup action have a strong connection with the skew product which has been analyzed to obtain properties of free semigroup actions through fiber associated with the skew product (see for instance \cite{MR3784991, MR4200965,MR4348410}). Recall that the skew product transformation is given as follows:
	$$
	F: Y^{\mathbb{N}} \times X \rightarrow Y^{\mathbb{N}} \times X,(\omega, x) \mapsto\left(\sigma(\omega), f_{i_1}(x)\right),
	$$
	where $\omega=\left(i_1, i_2, \cdots\right)$ and $\sigma$ is the shift map of $Y^{\mathbb{N}}$. The metric $d^\prime$ on $Y^{\mathbb{N}}$ is given by 
	$$
	d^\prime(\omega,\omega^\prime):=\sum_{j=1}^\infty\frac{d_Y(i_j,i_j^\prime)}{2^j}.
	$$
	The metric $D$ on $Y^{\mathbb{N}} \times X$ is given by the formula
	$$
	D\left((\omega, x),\left(\omega^{\prime}, x^{\prime}\right)\right):=\max \left\{d^{\prime}\left(\omega, \omega^{\prime}\right), d\left(x, x^{\prime}\right)\right\}.
	$$
	
		\hspace{4mm}
	The specification property of free semigroup actions was introduced by Rodrigues and Varandas \cite{MR3503951}. 
	\begin{definition}\label{specification}
		\cite{MR3503951} We say that $G$ has the specification property if for any $\varepsilon>0$, there exists $\mathfrak{m}(\varepsilon)>0$, such that for any $k>0$, any points $x_{1},\cdots, x_{k} \in X$, any positive integers $n_{1}, \cdots, n_{k}$, any word $w_{(n_{1})} \in Y^{n_1}$,$\cdots$, $w_{(n_{k})} \in Y^{n_k}$, any $p_{1},\cdots, p_{k} \geq \mathfrak{m}(\varepsilon)$, any $w_{(p_{1})} \in Y^{p_1}$, $\cdots$, $w_{(p_{k})} \in Y^{p_k}$,  one has
		$$
		B_{w_{(n_1)}}\left (x_1,\varepsilon\right )\cap\left (\bigcap_{j=2}^k {f^{-1}_{\overline{w_{(p_{j-1})}}\, \overline{w_{(n_{j-1})}}\cdots\overline{w_{(p_1)}}\, \overline{w_{(n_1)}}}}B_{w_{(n_j)}}\left (x_j,\varepsilon\right )\right )\neq\emptyset.
		$$
	\end{definition}

	\hspace{4mm}
	If $m=1$,  the specification property of free semigroup actions coincides with the classical definition introduced by Bowen \cite{MR282372}.
	
	\subsection{Some concepts}
	Let $(Y,d_Y)$ be a compact metric space, $\nu$ be a Borel probability measure on $Y$. A balanced measure should give the same probability to any two balls with the same radius, but in general this is too strong a requirement. Bowen \cite{MR0274707} therefore introduced a definition of the chi-square measure. In this paper, we only need the following definition which is weaker than Bowen's \cite{MR0274707}. 
	
	\begin{definition}
		\cite{MR0274707}
		We say that $\nu$ is homogeneous if there exists $L>0$ such that
		$$
		\nu \left(B\left(y_1, 2 \varepsilon\right)\right) \leq L \nu\left(B\left(y_2, \varepsilon\right)\right) \quad \forall y_1, y_2 \in \operatorname{supp} \nu \quad \forall \varepsilon>0 .
		$$
	\end{definition}
	
	\hspace{4mm}
	For instance, the Lebesgue measure on $[0,1]$, atomic measures, and probability measures absolutely continuous with respect to the latter ones, with densities bounded away from zero and infinity, are examples of homogeneous probability measures. We denote by $\mathcal{H}_Y$ the set of such homogeneous Borel probability measures on $Y$. For a discussion on conditions on $Y$ which ensure the existence of homogeneous measures, we refer the reader to (\cite{MR3137474}, Sec. 4) and references therein.
	
	\hspace{4mm}
	Next, we recall the definition of upper box dimension, see e.g. \cite{MR2118797} for more details.
	\begin{definition}
		\cite{MR2118797}
		The upper box dimension of $(Y,d_Y)$ is given by 
		$$
		\overline{\mathrm{dim}}_B Y:=\limsup_{\varepsilon\to 0} \frac{\log N_{d_Y}(\varepsilon)}{\log\frac{1}{\varepsilon}},
		$$
		where $N_{d_Y}(\varepsilon)$ denotes the maximal cardinality of $\varepsilon$-separated set of $(Y,d_Y)$.
	\end{definition}

	\hspace{4mm}
	The gluing orbit property was introduced in \cite{MR2921897} (with the terminology of transitive specification property) and independently in \cite{MR3944271} for homeomorphisms and flows. It bridges between completely non-hyperbolic dynamics (equicontinuous and minimal dynamics \cite{MR3603272,MR3960495}) and uniformly hyperbolic dynamics (see e.g. \cite{MR3944271}). Both of these properties imply a rich structure on the dynamics (see e.g. \cite{MR3603272, MR4125519}).
	\begin{definition}
		\cite{MR2921897}
		Let $(X,d)$ be a compact metric space, $f: X\to X$ a continuous self-map. We say that $f$ satisfies the gluing orbit property if for any $\varepsilon>0$, there exists an integer $\mathfrak{p}(\varepsilon) \geq 1$, so that for any points $x_1, x_2, \cdots, x_k \in X$, any positive integers $n_1, \cdots, n_k$, there are $0 \leq p_1, \cdots, p_{k-1} \leq \mathfrak{p}(\varepsilon)$ and a point $y \in X$ hold
		$$
		B_{n_1}(x_1,\varepsilon)\cap\bigcap_{j=2}^k f^{-(n_1+p_1+\cdots+n_{j-1}+p_{j-1})}\left(B_{n_j}(x_j,\varepsilon)\right)\neq\emptyset.
		$$
		Here $B_n(x,\varepsilon)$ denotes the $(n,\varepsilon)$-Bowen ball of $f$.
	\end{definition}
	
	\hspace{4mm}
	It is not hard to check that irrational rotations satisfy the gluing orbit property \cite{MR3603272}, but fail to satisfy the shadowing or specification properties. Partially hyperbolic examples exhibiting the same kind of behavior have been constructed in \cite{MR4159675}.

	\hspace{4mm}
	Under the gluing orbit property, the metric mean dimension of the irregular set has been studied in Lima and Varandas \cite{MR4308163}, but the metric mean dimension of such set has not been studied in dynamical systems of free semigroup actions. In this paper, we focus on the metric mean dimension of such set of free semigroup actions and obtain more extensive results. Therefore, it is important and necessary to introduce the gluing orbit property of free semigroup actions.

	\hspace{4mm}
Next, we introduce the concept of the gluing orbit property of free semigroup actions:
\begin{definition}\label{definition5}
	We say that $G$ satisfies the gluing orbit property, if for any $\varepsilon>0$, there exists $\mathfrak{p}(\varepsilon)>0$, 
	such that for any $k\ge 2$, any points $x_1,\cdots,x_k\in X$, any positive integers $n_1,\cdots,n_k$, any words $w_{(n_1)}\in Y^{n_1},\cdots, w_{(n_k)}\in Y^{n_k}$, there exist $0\le p_1,\cdots,p_{k-1}\le \mathfrak{p}(\varepsilon)$, such that for any words $w_{(p_1)}\in Y^{p_1},\cdots,w_{(p_{k-1})}\in Y^{p_{k-1}}$, one has
	$$
	B_{w_{(n_1)}}\left (x_1,\varepsilon\right )\cap\left (\bigcap_{j=2}^k {f^{-1}_{\overline{w_{(p_{j-1})}}\, \overline{w_{(n_{j-1})}}\cdots\overline{w_{(p_1)}}\, \overline{w_{(n_1)}}}}B_{w_{(n_j)}}\left (x_j,\varepsilon\right )\right )\neq\emptyset.
	$$
\end{definition}
	\begin{remark}
		It is clear that the specification property (see Definition \ref{specification}) implies the gluing orbit property for free semigroup actions. If $m = 1$, the gluing orbit property of free semigroup actions coincides with the definition of a single map introduced by Bomfim and Varandas \cite{MR3944271}.
	\end{remark}

	\hspace{4mm}
	We describe an example to help us interpret the gluing orbit property of free semigroup actions.

	\begin{example}
		Let $M$ be a compact Riemannian manifold, $Y=\{0,1,\cdots,m-1\}$ and $G$ the free semigroup generated by $G_1=\{f_0, \cdots, f_{m-1}\}$ on $M$ which are $C^1$-local diffeomorphisms such that for any $j=0, \cdots, m-1$, $\left\|D f_j(x) v\right\| \geq \lambda_j\|v\|$ for all $x \in M$ and all $v \in T_x M$, where $\lambda_j$ is a constant larger than 1. It follows from Theorem 16 of \cite{MR3503951} that $G$ satisfies specification property. Given $\varepsilon>0$, let $\mathfrak{p}(\varepsilon):=2\mathfrak{m}(\varepsilon)$ where $\mathfrak{m}(\varepsilon)$ is the positive integer in the definition of specification property of $G$ (see Definition \ref{specification}). For any points $x_1,\cdots,x_k\in M$, any positive integers $n_1,\cdots,n_k$, any words $w_{(n_1)}\in Y^{n_1},\cdots, w_{(n_k)}\in Y^{n_k}$, pick $p_1=p_2=\cdots =p_{k-1}=\mathfrak{m}(\varepsilon)$,
		for any words $w_{(p_1)}\in Y^{p_1},\cdots,w_{(p_{k-1})}\in Y^{p_{k-1}}$, by specification property it holds that
		$$
		B_{w_{(n_1)}}\left (x_1,\varepsilon\right )\cap\left (\bigcap_{j=2}^k {f^{-1}_{\overline{w_{(p_{j-1})}}\, \overline{w_{(n_{j-1})}}\cdots\overline{w_{(p_1)}}\, \overline{w_{(n_1)}}}}B_{w_{(n_j)}}\left (x_j,\varepsilon\right )\right )\neq\emptyset.
		$$
		Hence, $G$ has the gluing orbit property.
	\end{example}

\section{Upper metric mean dimension of free semigroup actions for non-compact sets and properties} \label{Upper}
	In this section, using Carath\'{e}odory-Pesin structure, we introduce the definitions of  upper metric mean dimension, $u$-upper metric mean dimension, $l$-upper metric mean dimension  of free semigroup action $G$ with respect to $\mathbb{P}$ for non-compact sets by open covers and Bowen's balls, respectively,  and provide some properties of them.
	
\subsection{Upper metric mean dimension of free semigroup actions for non-compact sets using open covers}
	For $0<\varepsilon<1$, let $C(\varepsilon)$ be the set of all the open covers of $X$ with diameter less than $\varepsilon$. Consider an open cover $\mathcal{U}$ of $X$ and denote by $\mathcal{W}_{n+1}(\mathcal{U})$ the collection of all strings $\mathbf{U}=(U_0, \cdots, U_{n})$ with length $\mathfrak{l}(\mathbf{U})=n+1$ where $U_j \in \mathcal{U}$ for all $0\le j\le n$. We put the Cartesian product 
	$$
	\mathcal{S}_{n+1}(\mathcal{U}):=\left\{ \left(w_{\mathbf{U}}, \mathbf{U}\right): \mathbf{U}\in\mathcal{W}_{n+1}(\mathcal{U}), \, w_{\mathbf{U}}\in Y^n\right\},
	$$
	and $\mathcal{S}(\mathcal{U}):=\bigcup_{n \ge 1}\mathcal{S}_n(\mathcal{U})$.
	
	\hspace{4mm}
	For $(w_{\mathbf{U}}, \mathbf{U})\in\mathcal{S}_{n+1}$, $w_{\mathbf{U}}=i_1i_2\cdots i_n$, we associate the set 
	
	$$
	\begin{aligned}
		X_{w_{\mathbf{U}}}(\mathbf{U}) :& =\left\{x \in X: x\in U_0,\,f_{i_1}(x)\in U_1,\cdots, f_{i_n\cdots i_1}(x)\in U_n\right\} \\
		& =U_0 \cap\left(f_{i_1}\right)^{-1}\left(U_1\right) \cap \cdots \cap\left(f_{i_n\cdots i_1}\right)^{-1}\left(U_n\right).
	\end{aligned}
	$$
	
	\hspace{4mm}
	The theory of Carath\'eodory dimension characteristic ensures the following definitions. Fixed $N\in\mathbb{N}$, $w\in Y^N$, $\lambda \in \mathbb{R}$, $Z \subset X$ and $0<\varepsilon<1$, we set
	$$
	M_w(Z, \lambda, N, \varepsilon,G, d):=\inf _{\mathcal{U} \in C(\varepsilon)} \inf _{\mathcal{G}_w(\mathcal{U})}\left\{\sum_{(w_{\mathbf{U}}, \mathbf{U}) \in \mathcal{G}_w(\mathcal{U})} e^ { -\lambda \mathfrak{l}(\mathbf{U})} \right\},
	$$
	where the second infimum is taken over finite or countable collections of strings $\mathcal{G}_w(\mathcal{U}) \subset \mathcal{S}(\mathcal{U})$ such that $\mathfrak{l}(\mathbf{U})\ge N+1$ and $w_{\mathbf{U}}|_{[1,N]}=w$ for all $(w_{\mathbf{U}}, \mathbf{U})\in \mathcal{G}_w(\mathcal{U})$ and 
	$
	Z \subset \bigcup_{(w_{\mathbf{U}}, \mathbf{U})\in \mathcal{G}_w(\mathcal{U})} X_{w_{\mathbf{U}}}(\mathbf{U}).
	$
	
	\hspace{4mm}
	For $\omega\in Y^{\mathbb{N}}$, put $w(\omega):=\omega|_{[1,N]}$, we define
	$$
	M(Z, \lambda, N, \varepsilon,G, d,\mathbb{P}):=\int_{ Y^{\mathbb{N}}}  M_{w(\omega)}(Z, \lambda, N, \varepsilon,G, d) d\mathbb{P}(\omega).
	$$
	Moreover, the function $N \mapsto M(Z, \lambda, N, \varepsilon,G, d,\mathbb{P})$ is non-decreasing as $N$ increases. Therefore, the following limit exists
	$$
	m(Z, \lambda, \varepsilon,G, d,\mathbb{P}):=\lim _{N \rightarrow+\infty} M(Z, \lambda, N, \varepsilon,G, d,\mathbb{P}).
	$$
	
	\hspace{4mm}
	Similarly, we define
	$$
	\begin{aligned}
		R_w(Z, \lambda, N, \varepsilon,G, d):&=\inf _{\mathcal{U} \in C(\varepsilon)} \inf _{\mathcal{G}_w(\mathcal{U})}\left\{\sum_{(w_{\mathbf{U}}, \mathbf{U}) \in \mathcal{G}_w(\mathcal{U})} e^{-\lambda (N+1)}\right\}\\
		&=e^{-\lambda (N+1)} \Lambda_w(Z, N, \varepsilon,G, d),
	\end{aligned}
	$$
	where $
	\Lambda_w(Z, N, \varepsilon,G, d):=\inf _{\mathcal{U} \in C(\varepsilon)} \inf _{\mathcal{G}_w(\mathcal{U})} \left \{ \sharp  \mathcal{G}_w(\mathcal{U})\right \},$
	and the second infimum is taken over finite or countable collections of strings $\mathcal{G}_w(\mathcal{U}) \subset \mathcal{S}(\mathcal{U})$ such that $\mathfrak{l}(\mathbf{U})= N+1$ and $w_{\mathbf{U}}=w$ for all $(w_{\mathbf{U}}, \mathbf{U})\in \mathcal{G}_w(\mathcal{U})$ and 
	$
	Z \subset \bigcup_{(w_{\mathbf{U}}, \mathbf{U}) \in \mathcal{G}_w(\mathcal{U})} X_{w_{\mathbf{U}}}(\mathbf{U}).
	$
	
	\hspace{4mm}
	Let
	$$
	\begin{aligned}
		R(Z, \lambda, N, \varepsilon,G, d,\mathbb{P}):&=\int_{ Y^{\mathbb{N}}}R_{w(\omega)}(Z, \lambda, N, \varepsilon,G, d,\mathbb{P}) d\mathbb{P}(\omega)\\
		&=e^{-\lambda (N+1)}\Lambda(Z, N, \varepsilon,G, d,\mathbb{P}), 
	\end{aligned}
	$$
	where $\Lambda(Z, N, \varepsilon,G, d,\mathbb{P})=\int_{ Y^{\mathbb{N}}}\Lambda_{w(\omega)}(Z, N, \varepsilon,G, d)d\mathbb{P}(\omega).$ We set
	$$
	\begin{aligned}
		\overline{r}(Z, \lambda, \varepsilon,G, d,\mathbb{P}):&=\limsup _{N \rightarrow+\infty} R(Z, \lambda, N, \varepsilon,G, d,\mathbb{P}), \\
		\underline{r}(Z, \lambda, \varepsilon,G, d,\mathbb{P}):&=\liminf _{N \rightarrow+\infty} R(Z, \lambda, N, \varepsilon,G, d,\mathbb{P}).
	\end{aligned}
	$$
	
	\hspace{4mm}
	When $\lambda$ goes from $-\infty$ to $+\infty$ the $m(Z, \lambda, \varepsilon,G, d)$,  $\overline{r}(Z, \lambda, \varepsilon,G, d)$, $\underline{r}(Z, \lambda, \varepsilon,G, d)$, jump from $+\infty$ to 0 at a unique critical value. We denote the critical values respectively as
	$$
	\begin{aligned}
		\overline{\mathrm{mdim}}_Z(\varepsilon,G, d,\mathbb{P}) :&=\inf \left \{\lambda:m(Z, \lambda, \varepsilon,G, d,\mathbb{P})=0\right \} \\
		&=\sup\left \{\lambda: m(Z, \lambda, \varepsilon,G, d,\mathbb{P})=\infty\right \},\\
		\overline{\mathrm{umdim}}_Z(\varepsilon,G, d,\mathbb{P}) :&=\inf \left \{\lambda:\overline{r}(Z, \lambda, \varepsilon,G, d,\mathbb{P})=0\right \} \\
		&=\sup\left \{\lambda:\overline{r}(Z, \lambda, \varepsilon,G, d,\mathbb{P})=\infty\right \},\\
		\underline{\mathrm{lmdim}}_Z(\varepsilon,G, d,\mathbb{P}) :&=\inf \left \{\lambda:\underline{r}(Z, \lambda, \varepsilon,G, d,\mathbb{P})=0\right \} \\
		&=\sup\left \{\lambda:\underline{r}(Z, \lambda, \varepsilon,G, d,\mathbb{P})=\infty\right \}.
	\end{aligned}
	$$
	Put
	$$
	\begin{aligned}
		\overline{\mathrm{mdim}}_Z(G, d,\mathbb{P})&:=\limsup _{\varepsilon \rightarrow 0}\frac{\overline{\mathrm{mdim}}_Z(G, d,  \varepsilon,\mathbb{P})}{\log \frac{1}{\varepsilon}},\\
		\overline{\mathrm{umdim}}_Z(G,d,\mathbb{P})&:=\limsup _{\varepsilon \rightarrow 0}\frac{\overline{\mathrm{umdim}}_Z(G, d, \varepsilon,\mathbb{P})}{\log \frac{1}{\varepsilon}},\\
		\overline{\mathrm{lmdim}}_Z(G, d,\mathbb{P})&:=\limsup _{\varepsilon \rightarrow 0}\frac{\underline{\mathrm{lmdim}}_Z(G, d, \varepsilon,\mathbb{P})}{\log \frac{1}{\varepsilon}}.\\
	\end{aligned}
	$$

	\hspace{4mm}
	The quantities $\overline{\mathrm{mdim}}_{Z}(G,d,\mathbb{P})$, $ \overline{\mathrm{umdim}}_{Z}(G, d,\mathbb{P})$, $ \overline{\mathrm{lmdim}}_{Z}(G, d,\mathbb{P})$ are called the upper metric mean dimension, $u$-upper metric mean dimension, $l$-upper metric mean dimension  of free semigroup action $G$ with respect to $\mathbb{P}$ on the set $Z$, respectively. 
	\begin{remark} If $\sharp Y=1$, $G_1=\{f\}$, these quantities coincides with the upper metric mean dimension, $u$-upper metric mean dimension, $l$-upper metric mean dimension of $f$ with 0 potential on the set $Z$ defined by Cheng et al. \cite{MR4216094}, respectively.
	\end{remark}
	
	\subsection{Properties of the upper metric mean dimension of free semigroup actions for non-compact sets}
	Using the basic properties of the Carath\'eodory–Pesin dimension \cite{MR1489237} and definitions, we get the following basic properties of  upper metric mean dimension, $u$-upper metric mean dimension and $l$-upper metric mean dimension  of free semigroup actions for non-compact sets.

		\begin{Proposition} \label{prop1}
			Let $G$ be the free semigroup acting on $X$ generated by $G_1=\{f_y:\, y\in Y\}$. Then
		\begin{enumerate}[(i)]
			\item \label{item1}
			$\overline{\mathrm{mdim}}_{Z_1}(G, d, \mathbb{P}) \le \overline{\mathrm{mdim}}_{Z_2}(G, d, \mathbb{P})$,   $\overline{\mathrm{umdim}}_{Z_1}(G, d, \mathbb{P}) \le \overline{\mathrm{umdim}}_{Z_2}(G, d, \mathbb{P}), \\
			\overline{\mathrm{lmdim}}_{Z_1}(G, d, \mathbb{P}) \le \overline{\mathrm{lmdim}}_{Z_2}(G, d, \mathbb{P})$,  if $Z_1 \subset Z_2 \subset X$.
			
			\item \label{item2}
			$\overline{\mathrm{mdim}}_Z(G, d, \mathbb{P}) = \sup _{i \ge 1} \overline{\mathrm{mdim}}_{Z_i}(G, d, \mathbb{P}),  \\
			\overline{\mathrm{umdim}}_Z(G, d, \mathbb{P}) \ge \sup _{i \ge 1} \overline{\mathrm{umdim}}_{Z_i}(G, d, \mathbb{P}), \\
			\overline{\mathrm{lmdim}}_Z(G, d, \mathbb{P}) \ge \sup _{i \ge 1} \overline{\mathrm{lmdim}}_{Z_i}(G, d, \mathbb{P})$, if $Z=\bigcup_{i \ge 1} Z_i$.
			
			\item  \label{item3}
			$\overline{\mathrm{mdim}}_Z(G, d, \mathbb{P}) \le \overline{\mathrm{lmdim}}_Z(G,d, \mathbb{P}) \le \overline{\mathrm{umdim}}_Z(G, d, \mathbb{P})$ for any subset $Z \subset X$.
			
		\end{enumerate}
	\end{Proposition}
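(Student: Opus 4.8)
The three items are all of "Carathéodory–Pesin formalism" type, so the plan is to reduce each of them to a corresponding monotonicity/countable-stability/ordering property of the set functions $m$, $\overline r$, $\underline r$ from which the critical exponents $\overline{\mathrm{mdim}}_Z(\varepsilon,G,d,\mathbb P)$, $\overline{\mathrm{umdim}}_Z(\varepsilon,G,d,\mathbb P)$, $\underline{\mathrm{lmdim}}_Z(\varepsilon,G,d,\mathbb P)$ are defined, and then take $\limsup_{\varepsilon\to 0}\frac{(\cdot)}{\log(1/\varepsilon)}$. For item \eqref{item1}, first I would observe that if $Z_1\subset Z_2$ then every admissible covering family $\mathcal G_w(\mathcal U)$ for $Z_2$ (i.e. one with $Z_2\subset\bigcup X_{w_{\mathbf U}}(\mathbf U)$) is also admissible for $Z_1$; hence $M_w(Z_1,\lambda,N,\varepsilon,G,d)\le M_w(Z_2,\lambda,N,\varepsilon,G,d)$, and likewise $\Lambda_w(Z_1,\cdot)\le\Lambda_w(Z_2,\cdot)$. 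Integrating in $\omega$ against $\mathbb P$ preserves the inequality, so $M(Z_1,\ldots)\le M(Z_2,\ldots)$, and passing to the limit in $N$ (for $m$) and to $\limsup/\liminf$ in $N$ (for $\overline r,\underline r$) the same ordering holds. Since a smaller set function has a smaller (or equal) critical value $\lambda$ at which it drops to $0$, we get $\overline{\mathrm{mdim}}_{Z_1}(\varepsilon,\cdot)\le\overline{\mathrm{mdim}}_{Z_2}(\varepsilon,\cdot)$ for each fixed $\varepsilon$, and the three displayed inequalities follow by dividing by $\log\frac1\varepsilon>0$ and taking $\limsup_{\varepsilon\to 0}$.

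For item \eqref{item2}, the case of $\overline{\mathrm{mdim}}$ is the genuine "$\sigma$-stability" of an outer-measure-type quantity: the inequality $\ge$ is immediate from \eqref{item1}, and for $\le$ I would fix $\lambda>\sup_i\overline{\mathrm{mdim}}_{Z_i}(\varepsilon,G,d,\mathbb P)$, so that $m(Z_i,\lambda,\varepsilon,G,d,\mathbb P)=0$ for every $i$; then given $\eta>0$ choose for each $i$ an admissible family with $w$-integral bounded by $\eta 2^{-i}$ (this requires unwinding that $m(Z_i,\lambda,\ldots)=0$ means the limit in $N$ is $0$, and using that $M(\cdot,\lambda,N,\varepsilon,\ldots)$ is non-decreasing in $N$, together with Fatou/monotone convergence to move the $\varepsilon$-and-$\mathcal U$ infima through the integral $\int_{Y^{\mathbb N}}d\mathbb P$). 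The union of these families covers $Z=\bigcup_i Z_i$ and has total weight $\le\eta$, whence $m(Z,\lambda,\varepsilon,G,d,\mathbb P)\le\eta$ for all $\eta$, so it is $0$ and $\overline{\mathrm{mdim}}_Z(\varepsilon,\cdot)\le\lambda$; letting $\lambda$ decrease to the supremum and then taking $\limsup_{\varepsilon\to0}$ gives equality. For $\overline{\mathrm{umdim}}$ and $\overline{\mathrm{lmdim}}$ only the trivial "$\ge$" direction from \eqref{item1} is claimed, so nothing more is needed there — indeed, countable additivity genuinely fails for these $\limsup$/$\liminf$-in-$N$ quantities, which is exactly why the Proposition states only an inequality.

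For item \eqref{item3}, I would argue at the level of fixed $\varepsilon$ and compare the three set functions directly. The middle inequality $\overline{\mathrm{mdim}}_Z(\varepsilon,\cdot)\le\underline{\mathrm{lmdim}}_Z(\varepsilon,\cdot)$: any family $\mathcal G_w(\mathcal U)$ admissible in the definition of $R_w$ (all strings of length exactly $N+1$ with $w_{\mathbf U}=w$) is in particular admissible in the definition of $M_w$ (strings of length $\ge N+1$ with $w_{\mathbf U}|_{[1,N]}=w$), and on such a family $e^{-\lambda\mathfrak l(\mathbf U)}=e^{-\lambda(N+1)}$, so $M_w(Z,\lambda,N,\varepsilon,G,d)\le R_w(Z,\lambda,N,\varepsilon,G,d)$; integrating and taking $\liminf_{N}$ on the right while taking $\lim_N$ on the left gives $m(Z,\lambda,\varepsilon,G,d,\mathbb P)\le\underline r(Z,\lambda,\varepsilon,G,d,\mathbb P)$, hence the critical values are ordered the same way. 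The inequality $\underline r\le\overline r$ is immediate ($\liminf\le\limsup$), giving $\underline{\mathrm{lmdim}}_Z(\varepsilon,\cdot)\le\overline{\mathrm{umdim}}_Z(\varepsilon,\cdot)$. Dividing by $\log\frac1\varepsilon$ and taking $\limsup_{\varepsilon\to0}$ yields the chain of inequalities. I expect the only real technical point to be the measurability of $\omega\mapsto M_{w(\omega)}(Z,\lambda,N,\varepsilon,G,d)$ and the validity of interchanging the various $\inf$'s and limits with $\int_{Y^{\mathbb N}}(\cdot)\,d\mathbb P$ in item \eqref{item2}; this is handled by noting $w(\omega)=\omega|_{[1,N]}$ takes only finitely many values in $Y^N$ up to null sets when $\operatorname{supp}\mathbb P$ is as in the standing setup (or, in general, by the monotone/dominated convergence theorem since all integrands are non-negative and the $N$-monotonicity is pointwise in $\omega$), so the interchange is routine.
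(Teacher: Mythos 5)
Your overall strategy is exactly the one the paper relies on: the paper gives no written proof of this proposition, appealing instead to the standard properties of Carath\'eodory--Pesin structures in \cite{MR1489237}, and your treatment of items (i) and (iii) --- monotonicity of the admissible covering families in $Z$, the observation that every $R_w$-admissible family is $M_w$-admissible with the same weight, integration against $\mathbb{P}$, and comparison of critical values at fixed $\varepsilon$ before passing to $\limsup_{\varepsilon\to 0}$ --- is correct and is the intended argument.

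In item (ii), however, there is a genuine gap at the very last step. Your $\sigma$-subadditivity argument does yield $\overline{\mathrm{mdim}}_Z(\varepsilon,G,d,\mathbb{P})=\sup_{i}\overline{\mathrm{mdim}}_{Z_i}(\varepsilon,G,d,\mathbb{P})$ for each fixed $\varepsilon$ (modulo the small point that the families you unite for different $i$ are subordinate to different covers $\mathcal{U}_i\in C(\varepsilon)$; this is repaired by passing to the single cover $\bigcup_i\mathcal{U}_i$, which again has diameter at most $\varepsilon$). But ``taking $\limsup_{\varepsilon\to0}$ gives equality'' does not follow: for a countable family one only has
\[
\limsup_{\varepsilon\to 0}\frac{\sup_i a_i(\varepsilon)}{\log\frac{1}{\varepsilon}}\;\ge\;\sup_i\,\limsup_{\varepsilon\to 0}\frac{a_i(\varepsilon)}{\log\frac{1}{\varepsilon}},
\]
and the reverse inequality can fail when the index (nearly) realizing $\sup_i$ drifts as $\varepsilon\to 0$. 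In the topological-entropy setting of \cite{MR1489237} the analogous step is saved because $\varepsilon\mapsto h_Z(\varepsilon)$ is monotone and one takes a plain limit, so $\sup_i$ and $\lim_{\varepsilon\to0}$ commute; after dividing by $\log\frac{1}{\varepsilon}$ that monotonicity is lost. As written, your argument therefore proves only ``$\ge$'' for $\overline{\mathrm{mdim}}$ in item (ii) --- the same inequality as for the other two quantities --- and the asserted equality requires either an additional argument or a restriction (e.g.\ finitely many $Z_i$, for which $\limsup$ and $\max$ do commute). This is as much a defect of the statement as of your proof, but it is the one step that does not go through.
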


	\hspace{4mm}
	Similar to the Theorem 2.2 in \cite{MR1489237} and Lemma 3.2 in \cite{MR3918203},  we obtain the following result:
	\begin{Proposition}\label{Proposition1}
		For any subset $Z\subset X$, one has 
		$$
		\begin{aligned}
			\overline{\mathrm{lmdim}}_{Z}(G,d,\mathbb{P})&=\limsup_{\varepsilon\to 0}\liminf_{N \to \infty}\frac{\log \Lambda(Z, N, \varepsilon,G, d,\mathbb{P})}{N\log\frac{1}{\varepsilon}},\\
			\overline{\mathrm{umdim}}_{Z}(G, d,\mathbb{P})&=\limsup_{\varepsilon\to 0}\limsup_{N \to \infty}\frac{\log \Lambda(Z, N, \varepsilon,G, d,\mathbb{P})}{N\log\frac{1}{\varepsilon}}.\\
		\end{aligned}
		$$
	\end{Proposition}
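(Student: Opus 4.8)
The plan is to identify the critical values $\overline{\mathrm{umdim}}_Z(\varepsilon,G,d,\mathbb{P})$ and $\underline{\mathrm{lmdim}}_Z(\varepsilon,G,d,\mathbb{P})$ with the exponential growth rates of $\Lambda(Z,N,\varepsilon,G,d,\mathbb{P})$, and then divide by $\log\frac1\varepsilon$ and pass to the $\limsup$ over $\varepsilon$. Fix $\varepsilon$ and recall that $R(Z,\lambda,N,\varepsilon,G,d,\mathbb{P})=e^{-\lambda(N+1)}\Lambda(Z,N,\varepsilon,G,d,\mathbb{P})$, so that $\frac1{N+1}\log R(Z,\lambda,N,\varepsilon,G,d,\mathbb{P})=-\lambda+\frac1{N+1}\log\Lambda(Z,N,\varepsilon,G,d,\mathbb{P})$. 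First I would show that if $\lambda>\limsup_{N\to\infty}\frac1{N+1}\log\Lambda(Z,N,\varepsilon,G,d,\mathbb{P})$ then $\frac1{N+1}\log R(Z,\lambda,N,\varepsilon,G,d,\mathbb{P})\to-\infty$, hence $R(Z,\lambda,N,\varepsilon,G,d,\mathbb{P})\to 0$, so $\overline{r}(Z,\lambda,\varepsilon,G,d,\mathbb{P})=0$ and thus $\lambda\ge\overline{\mathrm{umdim}}_Z(\varepsilon,G,d,\mathbb{P})$; conversely if $\lambda<\limsup_{N\to\infty}\frac1{N+1}\log\Lambda(Z,N,\varepsilon,G,d,\mathbb{P})$ then along a subsequence $\frac1{N+1}\log R(Z,\lambda,N,\varepsilon,G,d,\mathbb{P})$ is bounded below by a positive constant, so $R(Z,\lambda,N,\varepsilon,G,d,\mathbb{P})\to\infty$ along that subsequence, giving $\overline{r}(Z,\lambda,\varepsilon,G,d,\mathbb{P})=\infty$, i.e. $\lambda\le\overline{\mathrm{umdim}}_Z(\varepsilon,G,d,\mathbb{P})$. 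This pins down
\[
\overline{\mathrm{umdim}}_Z(\varepsilon,G,d,\mathbb{P})=\limsup_{N\to\infty}\frac{\log\Lambda(Z,N,\varepsilon,G,d,\mathbb{P})}{N+1}.
\]
The identical argument with $\liminf$ in place of $\limsup$ throughout (using that $\underline{r}$ is defined via $\liminf_N R$) yields $\underline{\mathrm{lmdim}}_Z(\varepsilon,G,d,\mathbb{P})=\liminf_{N\to\infty}\frac{\log\Lambda(Z,N,\varepsilon,G,d,\mathbb{P})}{N+1}$.

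Next I would divide both sides by $\log\frac1\varepsilon$ and take $\limsup_{\varepsilon\to0}$. Since $\frac1{N+1}$ and $\frac1N$ are asymptotically equal as $N\to\infty$, replacing $N+1$ by $N$ in the denominator does not change the inner $\limsup_N$ or $\liminf_N$; this is the routine step where one checks $\frac{\log\Lambda(Z,N,\varepsilon,G,d,\mathbb{P})}{N+1}$ and $\frac{\log\Lambda(Z,N,\varepsilon,G,d,\mathbb{P})}{N}$ have the same upper and lower limits (noting $\Lambda\ge1$ so the logarithm is nonnegative, and $\Lambda$ grows at most exponentially because $X$ is compact and the covers are finite, so these limits are finite). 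Then by the definitions of $\overline{\mathrm{umdim}}_Z(G,d,\mathbb{P})$ and $\overline{\mathrm{lmdim}}_Z(G,d,\mathbb{P})$ in Section \ref{Upper} the two displayed formulas follow immediately.

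The one genuinely delicate point—and the place I would slow down—is the interchange of the integral over $Y^{\mathbb{N}}$ with the growth-rate computation, since $\Lambda(Z,N,\varepsilon,G,d,\mathbb{P})=\int_{Y^{\mathbb{N}}}\Lambda_{w(\omega)}(Z,N,\varepsilon,G,d)\,d\mathbb{P}(\omega)$ is already an average and the critical-value characterization of $R$ is phrased in terms of this averaged quantity. Fortunately, because $R(Z,\lambda,N,\varepsilon,G,d,\mathbb{P})=e^{-\lambda(N+1)}\Lambda(Z,N,\varepsilon,G,d,\mathbb{P})$ is exactly $e^{-\lambda(N+1)}$ times the already-integrated $\Lambda$, no Fatou/Fubini juggling inside the $\limsup_N$ is needed: the estimates above are carried out directly on the scalar sequence $N\mapsto\Lambda(Z,N,\varepsilon,G,d,\mathbb{P})$. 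So the main obstacle is really just bookkeeping: verifying the monotonicity/finiteness facts that make the critical values well-defined (these are asserted in Section \ref{Upper}) and confirming $\Lambda\ge1$ with at-most-exponential growth so that all the $\limsup$'s and $\liminf$'s are finite and the division by $\log\frac1\varepsilon$ behaves well as $\varepsilon\to0$.
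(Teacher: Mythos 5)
Your proposal is correct and follows essentially the same route as the paper's own proof: both reduce to showing, for fixed $\varepsilon$, that the critical value $\underline{\mathrm{lmdim}}_Z(\varepsilon,G,d,\mathbb{P})$ (resp.\ $\overline{\mathrm{umdim}}_Z(\varepsilon,G,d,\mathbb{P})$) equals $\liminf_N$ (resp.\ $\limsup_N$) of $\frac{\log\Lambda(Z,N,\varepsilon,G,d,\mathbb{P})}{N+1}$, by using $R=e^{-\lambda(N+1)}\Lambda$ and comparing exponential growth rates on both sides of the putative critical value, then dividing by $\log\frac1\varepsilon$. One tiny caveat: the $\liminf$ case is a mirror image rather than literally ``identical'' (the roles of ``along a subsequence'' and ``eventually for all $N$'' swap between the two directions), but that adjustment is routine and does not affect correctness.
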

	\begin{proof}
		We will prove the first equality; the second one can be proved in a similar fashion. It is enough to show that 
		$$
		\overline{\mathrm{lmdim}}_{Z}(\varepsilon,G,d,\mathbb{P})=\liminf_{N \to \infty}\frac{\log \Lambda(Z, N, \varepsilon,G, d,\mathbb{P})}{N}
		$$
		for any $0<\varepsilon<1$. This can be checked as follows. Put 
		$$
		\alpha=\overline{\mathrm{lmdim}}_{Z}(\varepsilon,G,d,\mathbb{P}),\quad
		\beta=\liminf_{N \to \infty}\frac{\log \Lambda(Z, N, \varepsilon,G, d,\mathbb{P})}{N}.
		$$
		Given $\gamma>0$, one can choose a sequence $N_j\to\infty$ such that
		$$
		0=\underline{r}(Z, \alpha+\gamma, \varepsilon,G, d,\mathbb{P})=\lim_{j\to\infty}R(Z, \alpha+\gamma, N_j, \varepsilon,G, d,\mathbb{P}).
		$$
		It follows that $R(Z, \alpha+\gamma, N_j, \varepsilon,G, d,\mathbb{P})<1$ for all sufficiently large $j$. Therefore, for such numbers $j$, 
		$$
		e^{-(\alpha+\gamma) (N_j+1)}\Lambda(Z, N_j, \varepsilon,G, d,\mathbb{P})<1.
		$$
		Moreover,
		$$
		\alpha+\gamma \geq \frac{\log \Lambda(Z, N_j, \varepsilon,G, d,\mathbb{P})}{N_j+1}.
		$$
		Therefore,
		$$
		\alpha+\gamma \geq \liminf_{N \rightarrow \infty} \frac{\log \Lambda(Z, N, \varepsilon,G, d,\mathbb{P})}{N}.
		$$
		Hence,
		\begin{equation}\label{prop1-1}
			\alpha \geq \beta-\gamma.
		\end{equation}
		
		\hspace{4mm}
		Let us now choose a sequence $N_j^{\prime}\to\infty$ such that
		$$
		\beta=\lim _{j \rightarrow \infty} \frac{\log \Lambda(Z, N_j^\prime, \varepsilon,G, d,\mathbb{P})}{N_j^{\prime}}.
		$$
		We have
		$$
		\lim_{j\to\infty}R(Z, \alpha-\gamma, N_j^\prime, \varepsilon,G, d,\mathbb{P})\ge \underline{r}(Z, \alpha-\gamma, \varepsilon,G, d,\mathbb{P})=\infty.
		$$
		This implies that $R(Z, \alpha-\gamma, N_j^\prime, \varepsilon,G, d,\mathbb{P})\ge1$ for all sufficiently large $j$. Therefore, for such $j$, 
		$$
		e^{-(\alpha-\gamma) (N_j^\prime+1)}\Lambda(Z,  N_j^\prime, \varepsilon,G, d, \mathbb{P})\ge1.
		$$
		and hence
		$$
		\alpha-\gamma \leq \frac{\log \Lambda(Z, N_j^\prime, \varepsilon,G, d, \mathbb{P})}{N_j^{\prime}+1}.
		$$
		Taking the limit as $j \rightarrow \infty$ we obtain that
		$$
		\alpha-\gamma \leq \liminf _{N \rightarrow \infty} \frac{\log \Lambda(Z, N, \varepsilon,G, d, \mathbb{P})}{N}=\beta,
		$$
		and consequently,
		\begin{equation}\label{prop1-2}
			\alpha \leq \beta+\gamma.
		\end{equation}
		Since $\gamma$ can be chosen arbitrarily small, the inequalities (\ref{prop1-1}) and (\ref{prop1-2}) imply that $\alpha=\beta$.
	\end{proof}

\hspace{4mm}
For the free semigroup $G$ acting on $X$ generated by $G_1=\{f_y\}_{y\in Y}$, a subset $Z\subset X$ is called $G$-invariant if $f_y^{-1}(Z)=Z$ for all $y\in Y$.  For an invariant set, similar to the topological entropy of a sing map \cite{MR1489237} and free semigroup actions \cite{MR3918203},  and the metric mean dimension \cite{MR4216094} of a sing map, we have the following theorem.
	\begin{Proposition}
		For any $G$-invariant subset $Z\subset X$, 
		$$
		\overline{\mathrm{lmdim}}_{Z}(G, d, \mathbb{P})=\overline{\mathrm{umdim}}_{Z}(G, d, \mathbb{P}).
		$$
	\end{Proposition}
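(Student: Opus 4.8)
The plan is as follows. By Proposition \ref{prop1}(iii) one already has $\overline{\mathrm{lmdim}}_{Z}(G,d,\mathbb{P})\le\overline{\mathrm{umdim}}_{Z}(G,d,\mathbb{P})$ for an arbitrary subset, so only the reverse inequality must be established under the hypothesis that $Z$ is $G$-invariant. Recall that $\mathbb{P}=\nu^{\mathbb{N}}$ is a product measure. Writing $a_N(\varepsilon):=\log\Lambda(Z,N,\varepsilon,G,d,\mathbb{P})$, Proposition \ref{Proposition1} gives $\overline{\mathrm{lmdim}}_{Z}(G,d,\mathbb{P})=\limsup_{\varepsilon\to0}\liminf_{N\to\infty}\frac{a_N(\varepsilon)}{N\log\frac1\varepsilon}$ and $\overline{\mathrm{umdim}}_{Z}(G,d,\mathbb{P})=\limsup_{\varepsilon\to0}\limsup_{N\to\infty}\frac{a_N(\varepsilon)}{N\log\frac1\varepsilon}$, so it suffices to show that for each fixed $\varepsilon\in(0,1)$ the limit $\lim_{N\to\infty}a_N(\varepsilon)/N$ exists. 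I plan to deduce this from Fekete's subadditive lemma once I have shown that $(a_N(\varepsilon))_{N\ge1}$ is subadditive; the subadditivity is exactly where $G$-invariance of $Z$ and the product structure of $\mathbb{P}$ enter.

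The core step is the word-level estimate $\Lambda_{w_1w_2}(Z,N_1+N_2,\varepsilon,G,d)\le\Lambda_{w_1}(Z,N_1,\varepsilon,G,d)\,\Lambda_{w_2}(Z,N_2,\varepsilon,G,d)$ for all $w_1\in Y^{N_1}$, $w_2\in Y^{N_2}$, with $w_1w_2\in Y^{N_1+N_2}$ the concatenation. To prove it I would fix $\eta>0$, choose $\mathcal{U}\in C(\varepsilon)$ and a collection $\{(w_1,\mathbf{U}^{(k)})\}_{k=1}^{K_1}\subset\mathcal{S}_{N_1+1}(\mathcal{U})$, $\mathbf{U}^{(k)}=(U^{(k)}_0,\dots,U^{(k)}_{N_1})$, with $Z\subset\bigcup_k X_{w_1}(\mathbf{U}^{(k)})$ and $K_1\le\Lambda_{w_1}(Z,N_1,\varepsilon,G,d)+\eta$, and similarly $\mathcal{V}\in C(\varepsilon)$ and $\{(w_2,\mathbf{V}^{(l)})\}_{l=1}^{K_2}$, $\mathbf{V}^{(l)}=(V^{(l)}_0,\dots,V^{(l)}_{N_2})$, with $Z\subset\bigcup_l X_{w_2}(\mathbf{V}^{(l)})$ and $K_2\le\Lambda_{w_2}(Z,N_2,\varepsilon,G,d)+\eta$. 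Set $\mathcal{W}:=\mathcal{U}\cup\mathcal{V}\cup\{U\cap V:U\in\mathcal{U},\,V\in\mathcal{V},\,U\cap V\neq\emptyset\}$; this is an open cover of $X$ all of whose members have diameter $<\varepsilon$, so $\mathcal{W}\in C(\varepsilon)$ and crucially no scale is lost. For the pairs $(k,l)$ with $U^{(k)}_{N_1}\cap V^{(l)}_0\neq\emptyset$ I form the length-$(N_1+N_2+1)$ string with word $w_1w_2$, namely $\mathbf{W}^{(k,l)}:=(U^{(k)}_0,\dots,U^{(k)}_{N_1-1},\,U^{(k)}_{N_1}\cap V^{(l)}_0,\,V^{(l)}_1,\dots,V^{(l)}_{N_2})\in\mathcal{W}_{N_1+N_2+1}(\mathcal{W})$. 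Given $x\in Z$, pick $k$ with $x\in X_{w_1}(\mathbf{U}^{(k)})$; since $Z$ is $G$-invariant, $f_{\overline{w_1}}(x)\in f_{\overline{w_1}}(Z)\subset Z$, so pick $l$ with $f_{\overline{w_1}}(x)\in X_{w_2}(\mathbf{V}^{(l)})$. Using $f_{\overline{w_2|_{[1,i]}}}\circ f_{\overline{w_1}}=f_{\overline{(w_1w_2)|_{[1,N_1+i]}}}$, the $w_1w_2$-orbit of $x$ lies in $U^{(k)}_0,\dots,U^{(k)}_{N_1}$ at times $0,\dots,N_1$ and in $V^{(l)}_0,\dots,V^{(l)}_{N_2}$ at times $N_1,\dots,N_1+N_2$; in particular at time $N_1$ it lies in $U^{(k)}_{N_1}\cap V^{(l)}_0$, hence $x\in X_{w_1w_2}(\mathbf{W}^{(k,l)})$. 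So $\{(w_1w_2,\mathbf{W}^{(k,l)})\}$ is a subcollection of $\mathcal{S}_{N_1+N_2+1}(\mathcal{W})$ of cardinality at most $K_1K_2$ whose associated sets cover $Z$; letting $\eta\to0$ gives the estimate.

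Finally, since $\mathbb{P}=\nu^{\mathbb{N}}$, writing $\omega|_{[1,N_1+N_2]}=(\omega|_{[1,N_1]})(\omega|_{[N_1+1,N_1+N_2]})$ with the blocks $\omega|_{[1,N_1]}$ and $\omega|_{[N_1+1,N_1+N_2]}$ independent and of laws $\nu^{N_1}$ and $\nu^{N_2}$, integrating the word-level estimate gives $\Lambda(Z,N_1+N_2,\varepsilon,G,d,\mathbb{P})\le\Lambda(Z,N_1,\varepsilon,G,d,\mathbb{P})\,\Lambda(Z,N_2,\varepsilon,G,d,\mathbb{P})$, i.e. $a_{N_1+N_2}(\varepsilon)\le a_{N_1}(\varepsilon)+a_{N_2}(\varepsilon)$ (for $Z\neq\emptyset$ these numbers are finite and $\ge0$; the case $Z=\emptyset$ is trivial). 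Fekete's lemma then gives $\lim_{N\to\infty}a_N(\varepsilon)/N=\inf_{N}a_N(\varepsilon)/N$, so $\liminf_N a_N(\varepsilon)/N=\limsup_N a_N(\varepsilon)/N$; dividing by $\log\frac1\varepsilon$ and taking $\limsup_{\varepsilon\to0}$, Proposition \ref{Proposition1} yields $\overline{\mathrm{umdim}}_{Z}(G,d,\mathbb{P})=\overline{\mathrm{lmdim}}_{Z}(G,d,\mathbb{P})$. I expect the main obstacle to be the splicing in the second paragraph: one must make sure the glued strings genuinely belong to one cover in $C(\varepsilon)$ (so the scale is not enlarged), have exactly length $N_1+N_2+1$ and word $w_1w_2$, and that the overlap $U^{(k)}_{N_1}\cap V^{(l)}_0$ at the junction time is never vacuous — this last point is precisely what $G$-invariance of $Z$ guarantees.
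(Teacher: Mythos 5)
Your proof is correct and follows essentially the same route as the paper: both establish submultiplicativity of $\Lambda(Z,\cdot,\varepsilon,G,d,\mathbb{P})$ by splicing covering strings using the $G$-invariance of $Z$ and the product structure of $\mathbb{P}$, and then invoke the subadditive (Fekete) lemma together with Proposition \ref{Proposition1}. The only difference is in the bookkeeping at the junction: the paper concatenates the two strings directly and inserts an extra symbol $i\in Y$ between the words (yielding $a_{p+q+1}\le a_p+a_q$), whereas you intersect the last set of the first string with the first set of the second inside a common refinement in $C(\varepsilon)$, obtaining exact subadditivity $a_{N_1+N_2}\le a_{N_1}+a_{N_2}$.
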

	
	\begin{proof}
		 Fix $0<\varepsilon<1$, $\mathcal{U}\in C(\varepsilon)$, $p,q\in\mathbb{N}$ and $w^{(1)}\in Y^p$, $w^{(2)}\in Y^q$. We can choose two collections of strings $\mathcal{G}_{w^{(1)}} \subset \mathcal{W}_{p+1}(\mathcal{U})$ and $\mathcal{G}_{w^{(2)}} \subset \mathcal{W}_{q+1}(\mathcal{U})$ which cover $Z$. Supposing that $(w^{(1)},\mathbf{U})\in \mathcal{G}_{w^{(1)}}$, $\mathbf{U}=\left(U_0, U_1 \cdots, U_{p}\right)$ and $(w^{(2)}, \mathbf{V})\in \mathcal{G}_{w^{(2)}}$, $\mathbf{V}=\left(V_0, V_1, \cdots, V_{q}\right)$, we define
		$$
		\mathbf{U V}:=\left(U_0, U_1, \cdots, U_{p}, V_0, V_1, \cdots, V_{q}\right).
		$$
		Fixed $i\in Y$, consider
		$$
		\mathcal{G}_{w^{(1)}iw^{(2)}}:=\left\{\left(w^{(1)}iw^{(2)},\mathbf{U V}\right): \mathbf{U} \in \mathcal{G}_{w^{(1)}} , \mathbf{V} \in \mathcal{G}_{w^{(2)}} \right\} \subset \mathcal{W}_{p+q+2}(\mathcal{U}).
		$$
		Then
		$$
		X_{w^{(1)}iw^{(2)}}(\mathbf{U V})=X_{w^{(1)}}(\mathbf{U}) \cap(f_{\overline{w^{(1)} i}})^{-1}\left(X_{w^{(2)}}(\mathbf{V})\right).
		$$
		Since $Z$ is a $G$-invariant set, the collection of strings $\mathcal{G}_{w^{(1)}iw^{(2)}}$ also covers $Z$. By the definition of $\Lambda_{w^{(1)}iw^{(2)}}(Z, p+q+1, \varepsilon,G, d)$, we have
		$$
		\Lambda_{w^{(1)}iw^{(2)}}(Z, p+q+1, \varepsilon,G, d)\le \sharp\mathcal{G}_{w^{(1)}iw^{(2)}}\le \sharp\mathcal{G}_{w^{(1)}}\times\sharp\mathcal{G}_{w^{(2)}}.
		$$
		This implies that
		$$
		\Lambda_{w^{(1)}iw^{(2)}}(Z, p+q+1, \varepsilon,G, d) \le \Lambda_{w^{(1)}}(Z, p, \varepsilon,G, d)\times\Lambda_{w^{(2)}}(Z, q, \varepsilon,G, d) .
		$$
		Then,
		$$
		\begin{aligned}
			\Lambda(Z, p+q+1, \varepsilon,G, d, \mathbb{P})&=\int_{ Y^{\mathbb{N}}} \Lambda_{w(\omega)}(Z, p+q+1, \varepsilon,G, d) d\mathbb{P}(\omega)\\
			&\le\int_{ Y^{\mathbb{N}}} \Lambda_{w^{(1)}(\omega)}(Z, p, \varepsilon,G, d)\times\Lambda_{w^{(2)}(\sigma^{p+1}\omega)}(Z, q, \varepsilon,G, d)d\mathbb{P}\\
			&= \Lambda(Z, p, \varepsilon,G, d, \mathbb{P})\times\Lambda(Z, q, \varepsilon,G, d, \mathbb{P}).
		\end{aligned}
		$$
		Therefore, 
		$$
		\Lambda(Z, p+q+1, \varepsilon,G, d, \mathbb{P}) \le \Lambda(Z, p, \varepsilon,G, d, \mathbb{P})\times\Lambda(Z, q, \varepsilon,G, d, \mathbb{P}) .
		$$
		Let $a_p:=\log \Lambda(Z, p, \varepsilon,G, d, \mathbb{P})$. Note that 
		$\Lambda(Z, p, \varepsilon,G, d, \mathbb{P}) \ge1.$
		Therefore, $\inf _{p\ge 1} \frac{a_p}{p}  >-\infty$. So, by Theorem 4.9 of \cite{MR648108}, the limit $\lim _{p \rightarrow \infty} \frac{a_p}{p}$ exists and coincides with $\inf _{p \rightarrow \infty} \frac{a_p}{p}$.
	\end{proof}
	
	\hspace{4mm}
	Next, we discuss the relationship between the upper metric mean dimension and $u$-upper metric mean dimension of free semigroup action $G$ on $Z$ when $Z$ is a compact $G$-invariant set. 	Let $0<\varepsilon<1$ be given. We choose any $\lambda>\overline{\mathrm{mdim}}_Z(\varepsilon,G, d, \mathbb{P})$, then
	$$
	m(Z, \lambda, \varepsilon,G, d, \mathbb{P})=\lim _{N \rightarrow\infty} M(Z, \lambda, N, \varepsilon,G, d, \mathbb{P})=0.
	$$
	It is easy to check that
	$$
	\inf_{\mathcal{U} \in C(\varepsilon)}\lim _{N \rightarrow\infty} M(Z, \lambda, N, \mathcal{U},G, d, \mathbb{P})=0,
	$$
	where 
	$$
	M(Z, \lambda, N, \mathcal{U},G, d,\mathbb{P}):=\int_{Y^{\mathbb{N}}}  M_{w(\omega)}(Z, \lambda, N, \mathcal{U},G, d) d\mathbb{P}(\omega),
	$$
	$$
	M_w(Z, \lambda, N, \mathcal{U},G, d, \mathbb{P}):=\inf _{\mathcal{G}_w(\mathcal{U})}\left\{\sum_{(w_{\mathbf{U}}, \mathbf{U}) \in \mathcal{G}_w(\mathcal{U})} e^ { -\lambda \mathfrak{l}(\mathbf{U})} \right\},
	$$
	and the  infimum is taken over finite or countable collections of strings $\mathcal{G}_w(\mathcal{U}) \subset \mathcal{S}(\mathcal{U})$ such that $\mathfrak{l}(\mathbf{U})\ge N+1$ and $w_{\mathbf{U}}|_{[1,N]}=w$ for all $(w_{\mathbf{U}}, \mathbf{U})\in \mathcal{G}_w(\mathcal{U})$ and $Z \subset \bigcup_{(w_{\mathbf{U}}, \mathbf{U})\in \mathcal{G}_w(\mathcal{U})} X_{w_{\mathbf{U}}}(\mathbf{U}).$
	There exists a open cover $\mathcal{U}\in C(\varepsilon)$ such that 
	$$
	\lim _{N \rightarrow\infty} M(Z, \lambda, N, \mathcal{U},G, d, \mathbb{P})=0.
	$$
	Note that $M(Z, \lambda, N, \mathcal{U},G, d, \mathbb{P})$ is non-decreasing as $N$ increases and non-negative, it follows that $M(Z, \lambda, N, \mathcal{U},G, d, \mathbb{P})=0$ for all $N\in\mathbb{N}$. Hence, for any $N\in\mathbb{N}$, we have
	$$
	M_w(Z, \lambda, N, \mathcal{U},G, d)=0, \quad \nu ^N-a.e. \,\, w\in Y^N.
	$$
	Then there exists a finite or countable collections of strings $\mathcal{G}_w \subset \mathcal{S}(\mathcal{U})$ with $w_{\mathbf{U}}|_{[1,N]}=w$ and $\mathfrak{l}(\mathbf{U})\ge N+1$ for all $(w_{\mathbf{U}}, \mathbf{U})\in \mathcal{G}_w$ and 
	$
	Z \subset \bigcup_{(w_{\mathbf{U}}, \mathbf{U})\in \mathcal{G}_w} X_{w_{\mathbf{U}}}(\mathbf{U})
	$
	such that 
	\begin{equation}\label{equation1}
		Q(G,Z,\lambda, \mathcal{G}_w):=\sum_{(w_{\mathbf{U}}, \mathbf{U})\in\mathcal{G}_w }e^{-\lambda\mathfrak{l}(\mathbf{U})}<p<1.
	\end{equation}
	Since $Z$ is compact we can choose $\mathcal{G}_w$ to be finite and $K\ge 3$ to be a constant such that
	\begin{equation}\label{equation2}
		\mathcal{G}_w\subset\bigcup_{j=1}^{K}\mathcal{S}_j(\mathcal{U}).
	\end{equation}
	For any $w^{(1)},w^{(2)}\in Y^N$ and $i\in Y$, we can construct
	$$
	\mathcal{A}_{\mathcal{G}_{w^{(1)}}i\mathcal{G}_{w^{(2)}}}:=\left\{(w_{\mathbf{U}}iw_{\mathbf{V}}, \mathbf{U V}): (w_{\mathbf{U}},\mathbf{U})\in\mathcal{G}_{w^{(1)}},  (w_{\mathbf{V}},\mathbf{V})\in\mathcal{G}_{w^{(2)}} \right\}, 
	$$
	where $\mathcal{G}_{w^{(1)}}$ and $\mathcal{G}_{w^{(2)}}$ satisfy (\ref{equation1}), (\ref{equation2}). Then 
	$$
	X_{w_{\mathbf{U}}iw_{\mathbf{V}}}(\mathbf{U V})=X_{w_{\mathbf{U}}}(\mathbf{U})\cap\left( f_{\overline{w_{\mathbf{U}}i}}\right) ^{-1}(X_{w_{\mathbf{V}}}\left (\mathbf{V})\right ),
	$$
	where $\mathfrak{l}(\mathbf{U V})\ge 2(N+1)$. Since  $Z$ is $G$-invariant, then $	\mathcal{A}_{\mathcal{G}_{w^{(1)}}i\mathcal{G}_{w^{(2)}}}$ covers $Z$. It is easy to see that
	$$
	Q\left (G,Z,\lambda, \mathcal{A}_{\mathcal{G}_{w^{(1)}}i\mathcal{G}_{w^{(2)}}}\right )\le Q\left (G,Z,\lambda, \mathcal{G}_{w^{(1)}}\right )\times Q\left (G,Z,\lambda, \mathcal{G}_{w^{(2)}}\right )<p^2.
	$$
	By the induction, for each $n\in\mathbb{N}$, $w^{(1)},\cdots,w^{(n)}\in Y^N$ and $i_1,\cdots,i_{n-1}\in Y$, we can define $\mathcal{A}_{\mathcal{G}_{w^{(1)}} i_1 \mathcal{G}_{w^{(2)}}\cdots  i_{n-1} \mathcal{G}_{w^{(n)}}}$ which covers $Z$ and satisfies
	$$
	Q\left (G,Z,\lambda,\mathcal{A}_{\mathcal{G}_{w^{(1)}}i_1 \mathcal{G}_{w^{(2)}}\cdots i_{n-1} \mathcal{G}_{w^{(n)}}}\right )<p^n.
	$$
	Let $\Gamma_{\mathcal{G}_{w^{(1)}}i_1\mathcal{G}_{w^{(2)}}\cdots }:=\mathcal{A}_{\mathcal{G}_{w^{(1)}}}\cup\mathcal{A}_{\mathcal{G}_{w^{(1)}}i_1\mathcal{G}_{w^{(2)}}}\cup \cdots.$ Since $Z$ is $G$-invariant, then $\Gamma_{\mathcal{G}_{w^{(1)}}i_1\mathcal{G}_{w^{(2)}}\cdots}$ covers $Z$ and 
	$$
	Q\left (G,Z,\lambda,\Gamma_{\mathcal{G}_{w^{(1)}}i_1\mathcal{G}_{w^{(2)}}\cdots }\right )\le \sum_{n=1}^\infty p^n<\infty.
	$$
	Therefore, for any $w^{(j)}\in Y^N$ and 
	$i_j\in Y$, $j\in\mathbb{N}$, there exists 
	$\Gamma_{\mathcal{G}_{w^{(1)}} i_1 \mathcal{G}_{w^{(2)}}\cdots }$ covering $Z$ and 
	$Q(G,Z,\lambda,\Gamma_{\mathcal{G}_{w^{(1)}}i_1\mathcal{G}_{w^{(2)}}\cdots })<\infty$. Put
	$$
	\mathcal{F}:=\left\{\Gamma_{\mathcal{G}_{w^{(1)}}i_1\mathcal{G}_{w^{(2)}}\cdots }:w^{(j)}\in Y^N,  i_j\in Y, j, N\in\mathbb{N}\right\}.
	$$
	\begin{condition}\label{Condition1}
		For any $N>0$ and $\nu^N-a.e.\, w\in Y^N$, there exists $\Gamma_{\mathcal{G}_{w}i_1\mathcal{G}_{w^{(2)}}\cdots }\in\mathcal{F}$ such that $w_{\mathbf{U}}|_{[1,N]}=w$ and $N+1\le \mathfrak{l}(\mathbf{U})\le N+K$ for any $(w_{\mathbf{U}},\mathbf{U})\in \Gamma_{\mathcal{G}_{w}i_1\mathcal{G}_{w^{(2)}}\cdots }$, where $K$ is a constant as that in (\ref{equation2}).
	\end{condition}
	\begin{Proposition}
		Under the Condition \ref{Condition1}, for any $G$-invariant and compact subset $Z\subset X$,
		$$
		\overline{\mathrm{mdim}}_{Z}(G, d)=\overline{\mathrm{lmdim}}_{Z}(G, d)=\overline{\mathrm{umdim}}_{Z}(G, d).
		$$
	\end{Proposition}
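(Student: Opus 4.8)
The plan is to reduce the triple equality to a single new inequality. By item (iii) of Proposition \ref{prop1} one always has $\overline{\mathrm{mdim}}_{Z}(G,d,\mathbb{P})\le\overline{\mathrm{lmdim}}_{Z}(G,d,\mathbb{P})\le\overline{\mathrm{umdim}}_{Z}(G,d,\mathbb{P})$, so it will be enough to prove the reverse inequality $\overline{\mathrm{umdim}}_{Z}(G,d,\mathbb{P})\le\overline{\mathrm{mdim}}_{Z}(G,d,\mathbb{P})$, which then forces all three quantities to coincide. Running the argument in the proof of Proposition \ref{Proposition1} with $\liminf$ replaced by $\limsup$ gives $\overline{\mathrm{umdim}}_{Z}(\varepsilon,G,d,\mathbb{P})=\limsup_{N\to\infty}\frac{\log\Lambda(Z,N,\varepsilon,G,d,\mathbb{P})}{N}$ for each $0<\varepsilon<1$, so the plan is to show
\[
\limsup_{N\to\infty}\frac{\log\Lambda(Z,N,\varepsilon,G,d,\mathbb{P})}{N}\le\overline{\mathrm{mdim}}_{Z}(\varepsilon,G,d,\mathbb{P})
\]
for every fixed $\varepsilon$; dividing by $\log\frac1\varepsilon>0$ and letting $\varepsilon\to0$ then yields the conclusion. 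One may assume $Z\neq\emptyset$, the empty case being trivial.

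The first step is to fix $0<\varepsilon<1$ and an arbitrary $\lambda>\overline{\mathrm{mdim}}_{Z}(\varepsilon,G,d,\mathbb{P})$; since $\overline{\mathrm{mdim}}_{Z}(\varepsilon,G,d,\mathbb{P})\ge0$ (for $\lambda<0$ the sum defining $M$ diverges, as every admissible string has length at least $N+1$), one may take $\lambda>0$. The construction carried out in the paragraphs preceding Condition \ref{Condition1} then provides a cover $\mathcal{U}\in C(\varepsilon)$, a constant $K$, and the family $\mathcal{F}$ each of whose members $\Gamma$ is a finite collection of strings covering $Z$ with $Q(G,Z,\lambda,\Gamma)\le\sum_{n\ge1}p^{n}=\frac{p}{1-p}=:C$, a bound independent of $N$. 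Invoking Condition \ref{Condition1}, for every $N\in\mathbb{N}$ and $\nu^{N}$-a.e.\ $w\in Y^{N}$ one obtains $\Gamma_{w}\in\mathcal{F}$ covering $Z$ all of whose strings $(w_{\mathbf{U}},\mathbf{U})$ satisfy $w_{\mathbf{U}}|_{[1,N]}=w$ and $N+1\le\mathfrak{l}(\mathbf{U})\le N+K$.

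The second step is a truncation to length $N+1$. For $(w_{\mathbf{U}},\mathbf{U})\in\Gamma_{w}$ with $\mathbf{U}=(U_{0},\dots,U_{\mathfrak{l}(\mathbf{U})-1})$, keep only $\mathbf{U}^{(N)}:=(U_{0},\dots,U_{N})$, whose associated word is exactly $w_{\mathbf{U}}|_{[1,N]}=w$. Since $X_{w}(\mathbf{U}^{(N)})\supseteq X_{w_{\mathbf{U}}}(\mathbf{U})$, the collection $\Gamma_{w}':=\{(w,\mathbf{U}^{(N)}):(w_{\mathbf{U}},\mathbf{U})\in\Gamma_{w}\}$ still covers $Z$, consists of strings of length exactly $N+1$ with word part $w$, and is therefore admissible for $\Lambda_{w}(Z,N,\varepsilon,G,d)$ (taken with the cover $\mathcal{U}$); hence $\Lambda_{w}(Z,N,\varepsilon,G,d)\le\sharp\Gamma_{w}'\le\sharp\Gamma_{w}$. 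On the other hand, using $\lambda>0$ together with $\mathfrak{l}(\mathbf{U})\le N+K$,
\[
\sharp\Gamma_{w}\,e^{-\lambda(N+K)}\le\sum_{(w_{\mathbf{U}},\mathbf{U})\in\Gamma_{w}}e^{-\lambda\mathfrak{l}(\mathbf{U})}=Q(G,Z,\lambda,\Gamma_{w})\le C,
\]
so $\Lambda_{w}(Z,N,\varepsilon,G,d)\le Ce^{\lambda(N+K)}$ for $\nu^{N}$-a.e.\ $w$. Integrating against $\mathbb{P}$, whose marginal on the first $N$ coordinates is $\nu^{N}$ (the exceptional null set of $w$'s contributing nothing to the integral), gives $\Lambda(Z,N,\varepsilon,G,d,\mathbb{P})\le Ce^{\lambda(N+K)}$ for all $N$, hence $\limsup_{N\to\infty}\frac1N\log\Lambda(Z,N,\varepsilon,G,d,\mathbb{P})\le\lambda$. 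Letting $\lambda$ decrease to $\overline{\mathrm{mdim}}_{Z}(\varepsilon,G,d,\mathbb{P})$ completes the verification of the displayed inequality, which by the reductions of the first paragraph completes the proof.

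I expect the routine parts to be the $\limsup$-analogue of Proposition \ref{Proposition1} and the elementary manipulations with $\Lambda$ and the integral. The main obstacle is the bookkeeping around Condition \ref{Condition1}: one must verify that the covers $\Gamma_{w}$ it supplies genuinely consist of strings of length at most $N+K$ with $K$ \emph{independent of $N$} --- in the raw construction before Condition \ref{Condition1} the iterated concatenations $\mathcal{A}_{\mathcal{G}_{w^{(1)}}i_1\mathcal{G}_{w^{(2)}}\cdots}$ produce strings of unbounded length, so without such uniform length control the exponent $\lambda(N+K)$ would not be of the form $\lambda N(1+o(1))$ and the sharp bound $\limsup\le\lambda$ would be lost --- and one must check that the length-$(N+1)$ truncation of these covers remains admissible for $\Lambda_{w}$, i.e.\ that it neither destroys the covering of $Z$ (it cannot, since the cylinders $X_{w}(\mathbf{U}^{(N)})$ only enlarge under truncation) nor alters the prescribed word prefix $w$.
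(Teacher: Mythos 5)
Your proposal is correct and follows essentially the same route as the paper: fix $\lambda$ above $\overline{\mathrm{mdim}}_{Z}(\varepsilon,G,d,\mathbb{P})$, take the covers $\Gamma_w\in\mathcal{F}$ supplied by Condition \ref{Condition1}, truncate each string to length $N+1$ (the paper's $\mathbf{U}^*$), and use the uniform bound $Q\le\sum_n p^n$ together with $\mathfrak{l}(\mathbf{U})\le N+K$ to bound $\Lambda(Z,N,\varepsilon,G,d,\mathbb{P})$ by a constant times $e^{\lambda(N+K)}$, forcing $\overline{\mathrm{umdim}}_{Z}(\varepsilon,G,d,\mathbb{P})\le\lambda$. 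The only cosmetic difference is that you pass through the $\limsup$-characterization of Proposition \ref{Proposition1} while the paper bounds $R(Z,\lambda,N,\varepsilon,G,d,\mathbb{P})$ directly, and you assume $\lambda>0$ where the paper inserts the factor $\max\{1,e^{\lambda K}\}$; both are immaterial.
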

	\begin{proof}
		Under Condition \ref{Condition1}. For any $N>0$ and $\nu^N-a.e.\, w\in Y^N$, there is $\Gamma_{\mathcal{G}_{w}i_1\mathcal{G}_{w^{(2)}}\cdots }\in\mathcal{F}$ covering $Z$ such that $w_{\mathbf{U}}|_{[1,N]}=w$ for any $(w_{\mathbf{U}}, \mathbf{U})\in\Gamma_{\mathcal{G}_{w}i_1\mathcal{G}_{w^{(2)}}\cdots }$. Then for any $x\in Z$, there exists a string $(w_{\mathbf{U}},\mathbf{U})\in\Gamma_{\mathcal{G}_{w}i_1\mathcal{G}_{w^{(2)}}\cdots }$, $\mathbf{U}=(U_0,U_1,\cdots,U_N,\cdots,U_{N+P})$, such that $x\in X_{w_{\mathbf{U}}}(\mathbf{U})$, where $0\le P\le K$. Let $U^*:=(U_0,U_1,\cdots,U_N)$. Then $X_{w_{\mathbf{U}}}(\mathbf{U})\subset X_w(\mathbf{U}^*)$. If $\Gamma_w^*$ denotes the collection of substrings $(w, \mathbf{U}^*)$ constructed above, then
		$$
		\begin{aligned}
			e^{-\lambda(N+1)}\Lambda_w(Z,N,\varepsilon,G,d)&\le e^{-\lambda(N+1)}\cdot \sharp\Gamma_w^*\\
			&\le \max\{1,e^{\lambda K}\}\cdot Q(G,Z,\lambda,\Gamma_{\mathcal{G}_{w}i_1\mathcal{G}_{w^{(2)}}\cdots})\\
			&\le \max\{1,e^{\lambda K}\}\cdot\sum_{n=1}^\infty p^n<\infty.
		\end{aligned}
		$$
		Therefore, 
		{
		$$
		\begin{aligned}
        R(Z,\lambda,N,\varepsilon,G,d, \mathbb{P})&=e^{-\lambda(N+1)}\Lambda(Z,N,\varepsilon,G,d, \mathbb{P})\\
			&=e^{-\lambda(N+1)}\int_{Y^{\mathbb{N}}} \Lambda_{w(\omega)}(Z,N,\varepsilon,G,d)d\mathbb{P}(\omega)<\infty.
		\end{aligned}
		$$
	}
		Then we have $\lambda>\overline{\mathrm{umdim}}_Z(\varepsilon,G,d, \mathbb{P})$. Hence,
		$$
		\overline{\mathrm{mdim}}_Z(\varepsilon,G,d, \mathbb{P})\ge \overline{\mathrm{umdim}}_Z(\varepsilon,G,d, \mathbb{P}).
		$$
		Dividing both sides of this inequality by $ \log\frac{1}{\varepsilon} $, and letting $\varepsilon\to 0$ to take the limitsup, we can get that
		$$
		\overline{\mathrm{mdim}}_Z(G,d, \mathbb{P})\ge \overline{\mathrm{umdim}}_Z(G,d, \mathbb{P}),
		$$
		as we wanted to prove.
	\end{proof}
	
	\subsection{Upper metric mean dimension of free semigroup actions for non-compact sets using open covers using Bowen balls}
	For $N\in\mathbb{N}$, $w\in Y^N$, $\lambda \in \mathbb{R}$, $Z \subset X$ and $0<\varepsilon<1$, we set
	$$
	M_w^B(Z, \lambda, N, \varepsilon,G, d):=\inf_{\Gamma_w}\left \{\sum_{i\in I}e^{-\lambda (|w_i|+1)}\right \},
	$$
	where the infimum is taken over all finite or countable collections $\Gamma_w=\left\{B_{w_i}\left(x_i, \varepsilon\right)\right\}_{i \in I}$ covering $Z$ with $|w_i| \ge N$ and $w_i|_{[1,N]}=w$.
		
	\hspace{4mm}
	For $\omega\in Y^\mathbb{N}$, put $w(\omega):=\omega|_{[1,N]}$, we define
	$$
	M^B(Z, \lambda, N, \varepsilon,G, d, \mathbb{P}):=\int_{Y^\mathbb{N}}M_{w(\omega)}^B(Z, \lambda, N, \varepsilon,G, d)d\mathbb{P}(\omega).
	$$
	Moreover, the function $N \mapsto M(Z, \lambda, N, \varepsilon,G, d)$ is non-decreasing as $N$ increases. Therefore, the following limit exists
	$$
	m^B(Z, \lambda, \varepsilon,G, d, \mathbb{P}):=\lim_{N\to\infty}M^B(Z, \lambda, N, \varepsilon,G, d, \mathbb{P}).
	$$
	
	\hspace{4mm}
	Similarly, we define
	$$
	\begin{aligned}
		R_w^B(Z, \lambda, N, \varepsilon,G, d):&=\inf_{\Gamma_w}\left \{\sum_{i\in I}e^{-\lambda (N+1)}\right \}\\
		&=e^{-\lambda (N+1)} \Lambda^B_w(Z, N, \varepsilon,G, d),
	\end{aligned}
	$$
	where $ \Lambda^B_w(Z, N, \varepsilon,G, d):=\inf_{\Gamma_w}\left \{\sharp\Gamma_w\right \},$
	and the infimum is taken over all finite or countable collections $\Gamma_w=\left\{B_{w}\left(x_i, \varepsilon\right)\right\}_{i \in I}$ covering $Z$ .
	
	\hspace{4mm}
	For $\omega\in Y^\mathbb{N}$, put $w(\omega):=\omega|_{[1,N]}$, we define
	$$
	\begin{aligned}
		R^B(Z, \lambda, N, \varepsilon,G, d,\mathbb{P}):&=\int_{Y^\mathbb{N}}R^B_{w(\omega)}(Z, \lambda, N, \varepsilon,G, d)d\mathbb{P}(\omega) \\
		&=e^{-\lambda (N+1)}\Lambda^B(Z, N, \varepsilon,G, d,\mathbb{P}),\\
	\end{aligned}
	$$
	where $\Lambda^B(Z, N, \varepsilon,G, d,\mathbb{P})=\int_{Y^\mathbb{N}}\Lambda^B_{w(\omega)}(Z, N, \varepsilon,G, d)d\mathbb{P}(\omega).$ We set
	$$
	\begin{aligned}
		\overline{r}^B(Z, \lambda, \varepsilon,G, d,\mathbb{P}):&=\limsup_{N\to\infty}R^B(Z, \lambda, N, \varepsilon,G, d,\mathbb{P}),\\
		\underline{r}^B(Z, \lambda, \varepsilon,G, d,\mathbb{P}):&=\liminf_{N\to\infty}R^B(Z, \lambda, N, \varepsilon,G, d,\mathbb{P}).
	\end{aligned}
	$$
	
	\hspace{4mm}
	It is readily to check that $m^B(Z, \lambda, \varepsilon,G, d,\mathbb{P}), \overline{r}^B(Z, \lambda, \varepsilon,G, d,\mathbb{P}), \underline{r}^B(Z, \lambda, \varepsilon,G, d,\mathbb{P})$ have a critical value of parameter $\lambda$ jumping from $\infty$ to 0 . We respectively denote their critical values as
	$$
	\begin{aligned}
		\overline{\mathrm{mdim}}_{Z}^B(\varepsilon,G, d,\mathbb{P}) :&=\inf \{\lambda: m^B(Z, \lambda, \varepsilon,G, d,\mathbb{P})=0\} \\
		&=\sup \{\lambda: m^B(Z, \lambda, \varepsilon,G, d,\mathbb{P})=\infty\}, \\
		\overline{\mathrm{umdim}}_{Z}^B(\varepsilon,G, d,\mathbb{P}) :&=\inf \{\lambda: \overline{r}^B(Z, \lambda, \varepsilon,G, d,\mathbb{P})=0\} \\
		&=\sup \{\lambda: \overline{r}^B(Z, \lambda, \varepsilon,G, d,\mathbb{P})=\infty\},\\
		\overline{\mathrm{lmdim}}_{Z}^B(\varepsilon,G, d,\mathbb{P}) :&=\inf \{\lambda: \underline{r}^B(Z, \lambda, \varepsilon,G, d,\mathbb{P})=0\} \\
		&=\sup \{\lambda: \underline{r}^B(Z, \lambda, \varepsilon,G, d,\mathbb{P})=\infty\}.
	\end{aligned}
	$$

	\hspace{4mm}
	
	\begin{thm}
		For any subset $Z\subset X$, one has
		$$
		\begin{aligned}
			\overline{\mathrm{mdim}}_{Z}(G, d,\mathbb{P})&=\limsup _{\varepsilon \rightarrow 0} \frac{\overline{\mathrm{mdim}}_{Z}^B(\varepsilon,G, d,\mathbb{P})}{\log \frac{1}{\varepsilon}}, \\
			\overline{\mathrm{umdim}}_{Z}(G, d,\mathbb{P})&=\limsup _{\varepsilon \rightarrow 0} \frac{\overline{\mathrm{umdim}}_{Z}^B(\varepsilon,G, d,\mathbb{P}) }{\log \frac{1}{\varepsilon}},\\
			\overline{\mathrm{lmdim}}_{Z}(G, d,\mathbb{P})&=\limsup _{\varepsilon \rightarrow 0} \frac{\overline{\mathrm{lmdim}}_{Z}^B(\varepsilon,G, d,\mathbb{P})}{\log \frac{1}{\varepsilon}}.
		\end{aligned}
		$$
	\end{thm}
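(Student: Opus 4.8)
The plan is to compare, scale by scale, the Carath\'eodory--Pesin quantities built from open covers of diameter $<\varepsilon$ with those built from Bowen balls of radius $\varepsilon$, and then to let $\varepsilon\to 0$ after dividing by $\log\frac1\varepsilon$. I will treat the first identity in detail; the argument is identical for the other two, with $M_w,M^B_w$ replaced by $R_w,R^B_w$ (equivalently $\Lambda_w,\Lambda^B_w$) and with $m,m^B$ replaced by $\overline r,\overline r^{B}$ for the $u$-identity and by $\underline r,\underline r^{B}$ for the $l$-identity, since $\lim_{N\to\infty}$, $\limsup_{N\to\infty}$ and $\liminf_{N\to\infty}$ all preserve non-strict inequalities.

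\emph{Covers dominate Bowen balls.} Fix $0<\varepsilon<1$, $\mathcal U\in C(\varepsilon)$ and a string $(w_{\mathbf{U}},\mathbf{U})\in\mathcal S(\mathcal U)$ with $X_{w_{\mathbf{U}}}(\mathbf{U})\neq\emptyset$. Writing $X_{w_{\mathbf{U}}}(\mathbf{U})=\bigcap_{w'\le\overline{w_{\mathbf{U}}}}f_{w'}^{-1}(U_{|w'|})$ and $B_{w_{\mathbf{U}}}(x,\varepsilon)=\bigcap_{w'\le\overline{w_{\mathbf{U}}}}f_{w'}^{-1}\big(B(f_{w'}(x),\varepsilon)\big)$, the fact that each $U_j$ has diameter $<\varepsilon$ gives $X_{w_{\mathbf{U}}}(\mathbf{U})\subset B_{w_{\mathbf{U}}}(x,\varepsilon)$ for any $x\in X_{w_{\mathbf{U}}}(\mathbf{U})$, with $|w_{\mathbf{U}}|+1=\mathfrak l(\mathbf{U})$. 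Hence any admissible family $\mathcal G_w(\mathcal U)$ covering $Z$ produces a cover of $Z$ by Bowen balls $B_{w_i}(x_i,\varepsilon)$ with $|w_i|\ge N$, $w_i|_{[1,N]}=w$ and total weight $\le\sum_{(w_{\mathbf{U}},\mathbf{U})\in\mathcal G_w(\mathcal U)}e^{-\lambda\mathfrak l(\mathbf{U})}$; when all strings have length exactly $N+1$, this gives a cover of $Z$ by at most $\sharp\mathcal G_w(\mathcal U)$ Bowen balls of word $w$. Taking infima, $M^B_w(Z,\lambda,N,\varepsilon,G,d)\le M_w(Z,\lambda,N,\varepsilon,G,d)$ and $\Lambda^B_w(Z,N,\varepsilon,G,d)\le\Lambda_w(Z,N,\varepsilon,G,d)$ for every $w\in Y^N$. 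Integrating against $\mathbb P$ (using the measurability of $w\mapsto M^B_w,\Lambda^B_w$, part of the construction in Section \ref{Upper}) and passing to the relevant limit in $N$ yields $m^B\le m$, $\overline r^{B}\le\overline r$, $\underline r^{B}\le\underline r$ at scale $\varepsilon$, hence $\overline{\mathrm{mdim}}^B_Z(\varepsilon,G,d,\mathbb P)\le\overline{\mathrm{mdim}}_Z(\varepsilon,G,d,\mathbb P)$, and likewise $\overline{\mathrm{umdim}}^B_Z(\varepsilon,G,d,\mathbb P)\le\overline{\mathrm{umdim}}_Z(\varepsilon,G,d,\mathbb P)$ and $\overline{\mathrm{lmdim}}^B_Z(\varepsilon,G,d,\mathbb P)\le\underline{\mathrm{lmdim}}_Z(\varepsilon,G,d,\mathbb P)$.

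\emph{Bowen balls dominate covers, up to a fixed change of scale.} There is a universal constant $c\in(0,1)$ such that for every $\varepsilon\in(0,1)$ there is a finite open cover $\mathcal U_\varepsilon\in C(\varepsilon)$ of $X$ with the property that every subset of $X$ of diameter $\le 2c\varepsilon$ is contained in some member of $\mathcal U_\varepsilon$ (for instance $\mathcal U_\varepsilon=\{B(y,\varepsilon/3):y\in E_\varepsilon\}$ for a finite $(\varepsilon/12)$-dense set $E_\varepsilon\subset X$, which works with $c=1/16$). Given a cover of $Z$ by Bowen balls $B_{w_i}(x_i,c\varepsilon)$ with $|w_i|\ge N$ and $w_i|_{[1,N]}=w$, for each $i$ and each $w'\le\overline{w_i}$ the set $f_{w'}\big(B_{w_i}(x_i,c\varepsilon)\big)\subset B(f_{w'}(x_i),c\varepsilon)$ has diameter $\le 2c\varepsilon$, hence lies in some $U^{(i)}_{|w'|}\in\mathcal U_\varepsilon$; assembling these into a string $\mathbf{U}^{(i)}$ of length $|w_i|+1$ with word $w_i$ gives $B_{w_i}(x_i,c\varepsilon)\subset X_{w_i}(\mathbf{U}^{(i)})$. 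Thus $\{(w_i,\mathbf{U}^{(i)})\}_i$ is an admissible family for $M_w(\cdot,\varepsilon)$ (for $\Lambda_w(\cdot,\varepsilon)$ when all $|w_i|=N$) covering $Z$ with the same weights, so $M_w(Z,\lambda,N,\varepsilon,G,d)\le M^B_w(Z,\lambda,N,c\varepsilon,G,d)$ and $\Lambda_w(Z,N,\varepsilon,G,d)\le\Lambda^B_w(Z,N,c\varepsilon,G,d)$ for all $w\in Y^N$. Integrating against $\mathbb P$, taking the limit in $N$, and comparing critical values yields $\overline{\mathrm{mdim}}_Z(\varepsilon,G,d,\mathbb P)\le\overline{\mathrm{mdim}}^B_Z(c\varepsilon,G,d,\mathbb P)$, and analogously $\overline{\mathrm{umdim}}_Z(\varepsilon,G,d,\mathbb P)\le\overline{\mathrm{umdim}}^B_Z(c\varepsilon,G,d,\mathbb P)$ and $\underline{\mathrm{lmdim}}_Z(\varepsilon,G,d,\mathbb P)\le\overline{\mathrm{lmdim}}^B_Z(c\varepsilon,G,d,\mathbb P)$.

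\emph{Passing to the limit.} Combining the two steps, for all sufficiently small $\varepsilon$,
$$\overline{\mathrm{mdim}}^B_Z(\varepsilon,G,d,\mathbb P)\ \le\ \overline{\mathrm{mdim}}_Z(\varepsilon,G,d,\mathbb P)\ \le\ \overline{\mathrm{mdim}}^B_Z(c\varepsilon,G,d,\mathbb P),$$
and the same chain holds with $\overline{\mathrm{mdim}}$ replaced by $\overline{\mathrm{umdim}}$, and with $\overline{\mathrm{lmdim}}^B_Z$ on the outside and $\underline{\mathrm{lmdim}}_Z$ in the middle. Dividing by $\log\frac1\varepsilon>0$ and taking $\limsup_{\varepsilon\to0}$: the left inequality gives ``$\le$'' in the asserted identity, and the right inequality gives ``$\ge$'' once one observes that $\limsup_{\varepsilon\to0}\overline{\mathrm{mdim}}^B_Z(c\varepsilon,G,d,\mathbb P)/\log\frac1\varepsilon=\limsup_{\eta\to0}\overline{\mathrm{mdim}}^B_Z(\eta,G,d,\mathbb P)/\log\frac1\eta$, which holds because $\log\frac1{c\varepsilon}=\log\frac1\varepsilon+\log\frac1c$ with $\log\frac1c>0$ a fixed constant, so the denominators $\log\frac1\varepsilon$ and $\log\frac1{c\varepsilon}$ are asymptotically equal, and each $\overline{\mathrm{mdim}}^B_Z(\eta,G,d,\mathbb P)$ lies in $[0,\infty)$. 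Hence all three identities follow. The one genuinely delicate point is the construction of $\mathcal U_\varepsilon$ in the second step: its diameter must stay below $\varepsilon$ while the largest diameter of a set guaranteed to lie inside a single member must remain comparable to $\varepsilon$ --- this ``diameter versus Lebesgue number'' discrepancy is exactly what forces the Bowen radius down from $\varepsilon$ to $c\varepsilon$, and it is essential that $c$ be independent of $\varepsilon$, for otherwise this loss would survive the division by $\log\frac1\varepsilon$.
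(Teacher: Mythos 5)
Your proof is correct and takes essentially the same approach as the paper's: compare $M_w$ (open covers) with $M^B_w$ (Bowen balls) scale by scale, using small diameter for one inclusion and a cover with a good Lebesgue-number property for the other, then absorb the fixed change of scale in the $\limsup_{\varepsilon\to 0}$. The only cosmetic differences are your choice of a finite $(\varepsilon/12)$-net cover where the paper uses $\{B(x,\varepsilon/2):x\in X\}$ (obtaining the two-sided inclusion $B_{w_{\mathbf U}}(x,\tfrac12\delta(\mathcal U))\subset X_{w_{\mathbf U}}(\mathbf U)\subset B_{w_{\mathbf U}}(x,2\operatorname{diam}\mathcal U)$), and your explicit justification that the constant $c$ being independent of $\varepsilon$ is what allows the scale change to vanish after dividing by $\log\tfrac1\varepsilon$ --- a point the paper leaves implicit.
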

	\begin{proof}
		We will prove the first equality; the second and third ones can be proved in a similar fashion.
		Let $\mathcal{U}$ be an open covers of $X$ with diameter less than $\varepsilon$, and $\delta(\mathcal{U})$ be the Lebesgue number of $\mathcal{U}$. It is easy to see that for every $x\in X$, if $x\in X_{w_{\mathbf{U}}}(\mathbf{U})$ for some $(w_{\mathbf{U}}, \mathbf{U})\in\mathcal{S}(\mathcal{U})$, then 
		\begin{equation}\label{equation8}
			B_{w_{\mathbf{U}}}(x,\frac{1}{2}\delta(\mathcal{U}))\subset X_{w_{\mathbf{U}}}(\mathbf{U})\subset B_{w_{\mathbf{U}}}(x,2\mathrm{diam}(\mathcal{U})).
		\end{equation}
		It follows that
		$$
		\begin{aligned}
			\inf _{\mathcal{G}_w(\mathcal{U})}\left\{\sum_{(w_{\mathbf{U}}, \mathbf{U}) \in \mathcal{G}_w(\mathcal{U})} e^ { -\lambda \mathfrak{l}(\mathbf{U})} \right\}
			&\ge M_w^B(Z, \lambda, N, 2\mathrm{diam} (\mathcal{U}),G, d)\\
			&\ge M_w^B(Z, \lambda, N, 2\varepsilon,G, d)
			,\quad \text{for any } \mathcal{U}\in C(\varepsilon).
		\end{aligned}
		$$
		Thus,
		\begin{equation}\label{equation9}
			M_w(Z, \lambda, N, \varepsilon,G, d)\ge M_w^B(Z, \lambda, N, 2\varepsilon,G, d).
		\end{equation}
		On the other hand, consider a open cover $\mathcal{U}:=\left\{B(x,\frac{\varepsilon}{2}): x\in X\right\}$. It is easy to check that $\frac{\varepsilon}{2}$ is a Lebesgue number of $\mathcal{U}$. It follows from (\ref{equation8}) that
		\begin{equation}\label{equation10}
			\begin{aligned}
				M_w^B(Z, \lambda, N, \frac{\varepsilon}{4},G, d)&\ge \inf _{\mathcal{G}_w(\mathcal{U})}\left\{\sum_{(w_{\mathbf{U}}, \mathbf{U}) \in \mathcal{G}_w(\mathcal{U})} e^ { -\lambda \mathfrak{l}(\mathbf{U})} \right\}\\
				&\ge M_w(Z, \lambda, N, \varepsilon,G, d).
			\end{aligned}
		\end{equation}
		We conclude by (\ref{equation9}) and (\ref{equation10}) that
		$$
		\overline{\mathrm{mdim}}_{Z}(G, d,\mathbb{P})=\limsup _{\varepsilon \rightarrow 0} \frac{\overline{\mathrm{mdim}}_{Z}^B(\varepsilon,G, d,\mathbb{P})}{\log \frac{1}{\varepsilon}}.
		$$
	\end{proof}
\begin{remark}
	\item [(\rmnum{1})]
         	If $\sharp Y=1$, then $\overline{\mathrm{mdim}}_{Z}(G, d,\mathbb{P})$ is equal to the upper mean metric dimension of a single map for non-compact subset $Z$ defined by Lima and Varandas \cite{MR4308163}.
	\item [(\rmnum{2})]
	If $\sharp Y=m$ and $\mathbb{P}$ is generated by the probability vector $\mathbf{p}:=\left(\frac{1}{m}, \cdots, \frac{1}{m}\right)$, then the critical values $\overline{\mathrm{umdim}}_Z(\varepsilon,G, d,\mathbb{P}),\overline{\mathrm{mdim}}_Z(\varepsilon,G, d,\mathbb{P}), \underline{\mathrm{lmdim}}_Z(\varepsilon,G, d,\mathbb{P})$ are equal to $h_Z(\varepsilon,G), \overline{\mathrm{Ch}}_Z(\varepsilon,G),\underline{\mathrm{Ch}}_Z(\varepsilon,G)$, respectively, as defined by Ju et al. \cite{MR3918203}.
	\item [(\rmnum{3})] 
	Ghys, Langevin and Walczak proposed in \cite{MR926526} the topological entropy of a semigroup action $G$ which differs from the way Bufetov \cite{MR1681003} was defined.  For $n \in \mathbb{N}$, let
	$$
	B_n^G(x, \varepsilon):=\left\{y \in X: d(f_{\overline{w}}(x), f_{\overline{w}}(y))<\varepsilon \text { for all } w\in Y^i, 0 \leq i \leq n\right\},
	$$
	called the $n t h$-dynamical ball of center $x$ and radius $\varepsilon$ (see \cite{MR3137474,MR926526} for more details). Rodrigues et al. \cite{WOS:000899898700001} introduced the metric mean dimension of free semigroup actions for non-compact sets in the GLW setting. For all $w\in Y^n$, we have 
	$$
	B_n^G(x, \varepsilon)\subset B_w(x,\varepsilon).
	$$
	Thus, the  metric mean dimension here defined is a lower bound for the dimension given in Rodrigues et al. \cite{WOS:000899898700001}.
\end{remark}

\section{The proofs of main results}\label{proofs}
\subsection{The proof of Theorem \ref{theorem1}}
In this subsection, we obtain lower and upper estimations of the upper metric mean dimension of free semigroup action $G$ generated by $G_1=\{f_y:y\in Y\}$ using local metric mean dimensions.
\begin{proof}[Proof of Theorem \ref{theorem1} (\ref{theorem5.1})]
	Fix $\gamma>0$. For each $k\ge 1$, put
	$$
	Z_k:=\left\{x\in Z: \liminf_{n\to\infty}\frac{-\log\sup_{w\in Y^n}\left \{\mu\left (B_w(x,\varepsilon)\right )\right \}}{(n+1)\log\frac{1}{\varepsilon}}>s-\gamma\text{ for all }\varepsilon\in \left(0,\frac{1}{k}\right) \right\}.
	$$
	Since $\underline{\mathrm{mdim}}_\mu (x,G)\ge s$ for all $x\in Z$, the sequence $\left\{Z_k\right\}_{k=1}^{\infty}$ increases to $Z$. So by the continuity of the measure, we have
	$$
	\lim _{k \to \infty} \mu\left(Z_k\right)=\mu(Z)>0 .
	$$
	Then fix some $k_0 \ge 1$ with $\mu(Z_{k_0})>\frac{1}{2} \mu(Z)$. For each $N \ge 1$, put
	$$
	\begin{aligned}
		Z_{k_0,N}:=&\left\{ x\in Z_{k_0}:\liminf_{n\to\infty}\frac{-\log\sup_{w\in Y^n}\left \{\mu\left (B_w(x,\varepsilon)\right )\right \}}{(n+1)\log\frac{1}{\varepsilon}}>s+\gamma   \right. \\
		&\quad\quad\quad\quad\quad\quad\quad\quad\quad\quad\quad\quad\quad\left.
		\text { for all } n \ge N \text { and } \varepsilon \in\left(0, \frac{1}{k_0}\right)
		\right\}.
	\end{aligned}
	$$
	Since the sequence $\left\{Z_{k_0, N}\right\}_{N=1}^{\infty}$ increases to $Z_{k_0}$, we may pick an $N^* \geq 1$ such that $\mu\left(Z_{k_0, N^*}\right)>\frac{1}{2} \mu\left(Z_{k_0}\right)$. Write $Z^*=Z_{k_0, N^*}$ and $\varepsilon^*=\frac{1}{k_0}$. Then $\mu(Z^*)>0$ and
	\begin{equation}\label{formula4-1}
		\sup_{w\in Y^n}\left \{\mu\left (B_w(x,\varepsilon)\right )\right \}<e^{-(s-\gamma)(n+1)\log\frac{1}{\varepsilon}}
	\end{equation}
for all $x \in Z^*$, $0<\varepsilon \leq\varepsilon^*$ and $n \ge N^*$.
	For any $N\ge N^*$ and $w\in Y^N$, set a countable cover of  $Z^*$
	$$
	\mathcal{F}_w:=\left\{B_{w^{(i)}}\left (x_i,\frac{\varepsilon}{2}\right ): w^{(i)}\in Y^{N^\prime},\,  N^\prime\ge N \text{ and } w^{(i)}|_{[1,N]}= w\right\},
	$$
	which satisfies
	$$
	Z^* \cap B_{w^{(i)}}\left (x_i,\frac{\varepsilon}{2}\right ) \neq \emptyset, \text { for all } i \geq 1 \text { and } 0<\varepsilon \le \varepsilon^* \text {. }
	$$
	For each $i \ge 1$, there exists an $y_i \in Z^* \cap B_{w^{(i)}}\left (x_i,\frac{\varepsilon}{2}\right )$. By the triangle inequality
	$$
	B_{w^{(i)}}\left (x_i,\frac{\varepsilon}{2}\right )\subset B_{w^{(i)}}\left (y_i,\varepsilon\right ).
	$$
	In combination with (\ref{formula4-1}), this implies
	$$
	\sum_{i \geq1} \mathrm{e}^{-(s-\gamma) (|w^{(i)}|+1)\log\frac{1}{\varepsilon}} \geq \sum_{i \geq 1} \mu\left(B_{w^{(i)}}\left (y_i,\varepsilon\right )\right) \geq \mu\left(Z^*\right)>0.
	$$
	Therefore, 
	$$
	M_w^B\left (Z^*, (s-\gamma)\log\frac{1}{\varepsilon}, N, \varepsilon,G, d\right )\ge \mu\left(Z^*\right)>0,
	$$
	for all $w\in F^N$ with $N \ge N^*$.
	Then 
	$$M^B\left (Z^*, (s-\gamma)\log\frac{1}{\varepsilon}, N, \varepsilon,G, d,\mathbb{P}\right )\ge \mu\left(Z^*\right)>0,
	$$
	and consequently
	$$
	m^B\left (Z^*, (s-\gamma)\log\frac{1}{\varepsilon}, \varepsilon,G, d,\mathbb{P}\right )=\lim_{N\to\infty}M^B\left (Z^*, (s-\gamma)\log\frac{1}{\varepsilon}, N, \varepsilon,G, d,\mathbb{P}\right )>0,
	$$
	which in turn implies that $\overline{\mathrm{mdim}}_{Z^*}^B(\varepsilon,G, d,\mathbb{P})\ge (s-\gamma)\log\frac{1}{\varepsilon}$. Dividing both sides of this inequality by $ \log\frac{1}{\varepsilon} $, and letting $\varepsilon\to 0$ to take the limitsup, we can get that 
	$$
	\overline{\mathrm{mdim}}_{Z^*}(G, d,\mathbb{P})\ge s-\gamma.
	$$
	Hence $\overline{\mathrm{mdim}}_{Z}(G, d,\mathbb{P})\ge s$ since  $\overline{\mathrm{mdim}}_{Z}(G, d,\mathbb{P})\ge \overline{\mathrm{mdim}}_{Z^*}(G, d,\mathbb{P})$ and $\gamma$ is arbitrary. The proof is completed now.
\end{proof}

\hspace{4mm}
First, we need the following lemma, which is much like the classical covering lemma, to prove Theorem \ref{theorem1} (\ref{theorem3}), and  the proof follows \cite{MR2412786} and is omitted.
\begin{lem}\cite{MR3918203}\label{coverlemma}
	Let $\varepsilon>0$ and $\mathcal{B}(\varepsilon):=\left\{B_w(x,\varepsilon): x \in X, w\in Y^N, N\in\mathbb{N}\right\}$. For any family $\mathcal{F} \subset \mathcal{B}(\varepsilon)$, there exists a (not necessarily countable) subfamily $\mathcal{G} \subset \mathcal{F}$ consisting of disjoint balls such that
	$$
	\bigcup_{B \in \mathcal{F}} B \subset \bigcup_{B_w(x, \varepsilon)\in \mathcal{G}} B_w\left (x, 3\varepsilon\right ).
	$$
\end{lem}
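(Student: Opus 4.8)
The plan is to reproduce the classical Vitali (``$5r$'') covering argument, the only new feature being that the members of $\mathcal{B}(\varepsilon)$ are balls for the \emph{different} dynamical metrics $d_w$ as the word $w$ varies. Since every such ball has the same ``radius'' $\varepsilon$, no stratification by radius is needed, and the constant $3$ (rather than the usual $5$) is precisely what two applications of the triangle inequality produce when the radii coincide.

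First I would produce $\mathcal{G}$ as a \emph{maximal} pairwise-disjoint subfamily of $\mathcal{F}$. Ordering the pairwise-disjoint subfamilies of $\mathcal{F}$ by inclusion, the union of any chain is again pairwise disjoint, so Zorn's lemma provides a maximal one. In order to control the enlargement step one should not take an arbitrary maximal family, but build it in increasing order of the word length: write $\mathcal{F}=\bigcup_{n\ge 1}\mathcal{F}_n$ with $\mathcal{F}_n=\{B_w(x,\varepsilon)\in\mathcal{F}:|w|=n\}$, choose recursively in $n$ a maximal disjoint subfamily $\mathcal{G}_n\subset\mathcal{F}_n$ among the balls of $\mathcal{F}_n$ that are disjoint from everything in $\mathcal{G}_1\cup\dots\cup\mathcal{G}_{n-1}$, and set $\mathcal{G}=\bigcup_n\mathcal{G}_n$; this $\mathcal{G}$ is pairwise disjoint and maximal.

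Next I would check the covering inclusion. Given $B_w(x,\varepsilon)\in\mathcal{F}$ not already in $\mathcal{G}$, maximality forces it to meet some $B_{w'}(x',\varepsilon)\in\mathcal{G}$; arranging (as in \cite{MR2412786}) that $w'$ may be taken to be an initial subword of $w$, one gets $d_{w'}\le d_w$, because the iterates $f_v$ entering the maximum that defines $d_{w'}$ form a sub-collection of those defining $d_w$. Picking $z\in B_w(x,\varepsilon)\cap B_{w'}(x',\varepsilon)$, then for every $y\in B_w(x,\varepsilon)$,
$$
d_{w'}(y,x')\le d_{w'}(y,z)+d_{w'}(z,x')\le d_w(y,z)+d_{w'}(z,x')<2\varepsilon+\varepsilon=3\varepsilon,
$$
using $d_w(y,z)\le d_w(y,x)+d_w(x,z)<2\varepsilon$ and $d_{w'}(z,x')<\varepsilon$. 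Hence $B_w(x,\varepsilon)\subset B_{w'}(x',3\varepsilon)$, and unioning over $B\in\mathcal{F}$ yields $\bigcup_{B\in\mathcal{F}}B\subset\bigcup_{B_w(x,\varepsilon)\in\mathcal{G}}B_w(x,3\varepsilon)$, as required.

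The step I expect to be the genuine obstacle is exactly this metric comparison. Unlike in the classical lemma, where all balls live in a single metric and the triangle inequality is immediate, the inequality $d_{w'}\le d_w$ exploited above is available \emph{only} when $w'$ is an initial subword of $w$; so the whole weight of the proof lies in organizing the selection of $\mathcal{G}$ — and, in the applications, the family $\mathcal{F}$ itself — so that the ball retained in $\mathcal{G}$ always carries the dominated dynamical metric. This is precisely the bookkeeping carried out by Ma and Wen \cite{MR2412786}, which is why the authors are content to cite it here.
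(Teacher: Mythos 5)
The paper itself omits the proof of this lemma (it defers to \cite{MR2412786} and \cite{MR3918203}), so the only comparison available is with the single-map argument you are adapting. Your skeleton is the right one — stratify $\mathcal{F}$ by word length, greedily extract a maximal disjoint subfamily stage by stage, then run the triangle inequality — and the $3\varepsilon$ computation is correct \emph{provided} $d_{w'}\le d_w$. The gap is precisely the step you flag and then defer to the citation: ``arranging that $w'$ may be taken to be an initial subword of $w$'' is not something maximality gives you, and for an arbitrary family $\mathcal{F}\subset\mathcal{B}(\varepsilon)$ it cannot be arranged at all. Maximality only yields \emph{some} ball $B_{w'}(x',\varepsilon)\in\mathcal{G}$ with $|w'|\le |w|$ meeting $B_w(x,\varepsilon)$. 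In the single-map case of \cite{MR2412786} the inequality $|w'|\le|w|$ already forces $d_{w'}\le d_w$, which is why that proof closes; for a genuine semigroup a shorter word need not be a prefix of $w$, and then neither $d_{w'}\le d_w$ nor its reverse holds, so $d_{w'}(y,z)$ cannot be bounded by $d_w(y,z)$.

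Moreover this is not a gap you could have closed by cleverer bookkeeping: without a hypothesis forcing the words in $\mathcal{F}$ to be prefix-comparable, the statement itself fails. Take $X=[0,1]^2$ with the max metric, $Y=\{0,1\}$, $f_0(s,t)=(\min(5s,1),t)$, $f_1(s,t)=(s,\min(5t,1))$, $\varepsilon=0.1$. Then $B_0((0,0),\varepsilon)=\{s<0.02,\,t<0.1\}$ and $B_1((0,0),\varepsilon)=\{s<0.1,\,t<0.02\}$ intersect, while $B_0((0,0),3\varepsilon)=\{s<0.06,\,t<0.3\}$ misses the point $(0.09,0.01)$ of the second ball and $B_1((0,0),3\varepsilon)=\{s<0.3,\,t<0.06\}$ misses the point $(0.01,0.09)$ of the first; hence for the two-element family $\mathcal{F}$ consisting of these balls no disjoint subfamily has the stated covering property. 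A correct version must either restrict $\mathcal{F}$ to balls whose words are totally ordered by the prefix relation, or enlarge to a ball $B_{w'}(x,3\varepsilon)$ indexed by a common prefix $w'$ (which is what the application to the sets $\mathcal{F}_w$ in the proof of Theorem \ref{theorem1}\,(\ref{theorem3}) actually needs, since there all words share only the prefix $w$ of length $N$). Your proof as written requires one of these modifications and supplies neither.
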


\begin{proof}[Proof of Theorem \ref{theorem1} (\ref{theorem3})]
	Since $\overline{\mathrm{mdim}}_\mu(x,G)\le s$ for all $x\in Z$, then for all $\omega\in Y^\mathbb{N}$ and $x\in Z$,
	$$
	\limsup_{\varepsilon\to 0}\liminf_{n\to\infty}\frac{-\log \mu\left( B_{\omega|_{[1,n]}}(x, \varepsilon)\right)}{(n+1)\log\frac{1}{\varepsilon}}\le\overline{\mathrm{mdim}}_\mu(x,G)\le s.
	$$
	Fixed $\gamma>0$, $N\in\mathbb{N}$ and $w\in Y^N$, we have $Z=\bigcup_{k\ge 1}Z_k$ where
	$$
	\begin{aligned}
		Z_k:=&\left\{ x\in Z:\liminf_{n\to\infty}\frac{-\log \mu\left(B_{\omega|_{[1,n]}}(x,\varepsilon)\right)}{(n+1)\log\frac{1}{\varepsilon}}<s+\gamma   \right. \\
		&\quad\quad\quad\quad\left.
		\text{for all }\varepsilon\in \left(0,\frac{1}{k}\right)\text{ for some } \omega\in Y^\mathbb{N}\text{ with }\omega|_{[1,N]}=w
		\right\}.
	\end{aligned}
	$$
	Now fix $k\ge 1$ and $0<\varepsilon<\frac{1}{3k}$. For each $x\in Z_k$, there exist $\omega_x\in Y^\mathbb{N}$ with $\omega_x|_{[1,N]}=w$ and a strictly increasing sequence $\{n_j^x\}_{j=1}^\infty$ such that
	$$
	\mu \left (B_{\omega_x|_{[1,n_j^x]}}(x,\varepsilon)\right )\ge e^{-(n_j^x+1)(s+\gamma)\log\frac{1}{\varepsilon}},\quad\text{for all } j\ge 1.
	$$
	So, the set $Z_k$ is contained in the union of the sets in the family
	$$
	\mathcal{F}_w:=\left\{ B_{\omega_x|_{[1,n_j^x]}}(x,\varepsilon): x\in Z_k,\omega_x\in Y^\mathbb{N},  \omega_x|_{[1,N]}=w, n_j^x\ge N\right\}.
	$$
	By Lemma \ref{coverlemma}, there exists a subfamily $\mathcal{G}_w=\{B_{\omega_{x_j}|_{[1,n_j]}}(x_j,\varepsilon)\}_{j\in J}\subset \mathcal{F}_w$ consisting of disjoint balls such that for all $j\in J$
	$$
	Z_k\subset \bigcup_{j\in J}B_{\omega_{x_j}|_{[1,n_j]}}(x_j,3\varepsilon)
	$$
	and
	$$
	\mu\left(B_{\omega_{x_j}|_{[1,n_j]}}(x_j,\varepsilon)\right) \ge e^{-(n_j+1)(s+\gamma)\log\frac{1}{\varepsilon}},\quad\text{for all } j\in J.
	$$
	The index set $J$ is at most countable since $\mu$ is a probability measure and $\mathcal{G}$ is a disjointed family of sets, each of which has a positive $\mu$-measure. Therefore, 
	$$
	\begin{aligned}
		M_w^B\left (Z_k, (s+\gamma)\log\frac{1}{\varepsilon}, N, 3\varepsilon,G, d,\mathbb{P}\right )&\le\sum_{j\in J} e^{-(n_j+1)(s+\gamma)\log\frac{1}{\varepsilon}}\\
		&\le \sum_{j\in J} \mu\left(B_{\omega_{x_j}|_{[1,n_j]}}(x_j,\varepsilon)\right) \le1,
	\end{aligned}
	$$
	where the disjointness of $\{B_{\omega_{x_j}|_{[1,n_j]}}(x_j,\varepsilon)\}_{j\in J}$ is used in the last inequality. It follows that 
	$$
	M^B\left (Z_k, (s+\gamma)\log\frac{1}{\varepsilon}, N, 3\varepsilon,G, d,\mathbb{P}\right )\le 1,
	$$ 
	and consequently
	$$
	m^B\left (Z_k, (s+\gamma)\log\frac{1}{\varepsilon}, 3\varepsilon,G, d,\mathbb{P}\right )=\lim_{N\to\infty}M^B\left (Z_k, (s+\gamma)\log\frac{1}{\varepsilon}, N, 3\varepsilon,G, d,\mathbb{P}\right )\le 1,
	$$
	which in turn implies that $\overline{\mathrm{mdim}}_{Z_k}^B(3\varepsilon,G,d,\mathbb{P})\le (s+\gamma)\log\frac{1}{\varepsilon}$ for any $0<\varepsilon<\frac{1}{3k}$. Dividing both sides of this inequality by $ \log\frac{1}{\varepsilon} $, and letting $\varepsilon\to 0$ to take the limitsup, we can get that
	$$
	\overline{\mathrm{mdim}}_{Z_k}(G,d,\mathbb{P})\le s+\gamma.
	$$ 
	As the arbitrariness of $\gamma$, we obtain that 
	$$
	\overline{\mathrm{mdim}}_{Z_k}(G,d,\mathbb{P})\le s, \quad\text{ for all }k\ge 1.
	$$
	By Proposition \ref{prop1} (\ref{item2}),
	$$
	\overline{\mathrm{mdim}}_{Z}(G,d,\mathbb{P})=\sup_{k\ge 1}\overline{\mathrm{mdim}}_{Z_k}(G,d,\mathbb{P})\le s.
	$$
	This finishes the proof of the theorem.
\end{proof}

\subsection{The Proof of Theorem \ref{theorem4}}
In the subsection, our purpose is to find the relationship between the $u$-upper metric mean dimension of free semigroup action $G$ generated by $G_1=\{f_y:\, y\in Y\}$ and the $u$-upper metric mean dimension of the corresponding skew product transformation $F$.

\hspace{4mm}
For  $Z\subset X$, $w\in Y^n$, and $0<\varepsilon<1$,  since $\Lambda_w^B(Z,n,\varepsilon,G,d)=r(w,\varepsilon,Z, G)$, then 
$$
\Lambda^B(Z,n,\varepsilon,G,d,\mathbb{P})=\int_{Y^\mathbb{N}} r(\omega|_{[1,n]},\varepsilon,Z, G) d\mathbb{P}(\omega).
$$
Therefore,
$$
\begin{aligned}
	\overline{\mathrm{umdim}}_Z(G,d,\mathbb{P})&=\limsup_{\varepsilon\to 0}\limsup_{n \rightarrow \infty}\frac{\log\int_{Y^\mathbb{N}} r(\omega|_{[1,n]},Z, \varepsilon,G) d\mathbb{P}(\omega)}{n\log\frac{1}{\varepsilon}}\\
	&=\limsup_{\varepsilon\to 0}\limsup_{n \rightarrow \infty}\frac{\log\int_{Y^\mathbb{N}} s(\omega|_{[1,n]},\varepsilon,Z, G) d\mathbb{P}(\omega)}{n\log\frac{1}{\varepsilon}}.
\end{aligned}
$$
and
$$
\begin{aligned}
	\overline{\mathrm{lmdim}}_Z(G,d,\mathbb{P})&=\limsup_{\varepsilon\to 0}\liminf_{n \rightarrow \infty}\frac{\log\int_{Y^\mathbb{N}} r(\omega|_{[1,n]},\varepsilon,Z, G) d\mathbb{P}(\omega)}{n\log\frac{1}{\varepsilon}}\\
	&=\limsup_{\varepsilon\to 0}\liminf_{n \rightarrow \infty}\frac{\log\int_{Y^\mathbb{N}} s(\omega|_{[1,n]},\varepsilon,Z, G) d\mathbb{P}(\omega)}{n\log\frac{1}{\varepsilon}}.
\end{aligned}
$$
\begin{remark}
	If $Z=X$, then $$\overline{\mathrm{umdim}}_X(G,d,\mathbb{P})=\overline{\mathrm{lmdim}}_X(G,d,\mathbb{P})=\overline{\mathrm{mdim}}_M(X,G,d,\mathbb{P}).$$
	Hence, the $u$-upper metric mean dimension and $l$-upper metric mean dimension of free semigroup actions on $X$ coincide with the upper metric mean dimension of free semigroup actions on $X$ defined by Carvalho et al.  \cite{MR4348410}.
\end{remark}

\hspace{4mm}
To prove Theorem \ref{theorem4}, we give the following two lemmas. The proof of these two lemmas is similar to that of Carvalho et al. \cite{MR4348410}. Therefore, we omit the proof.
\begin{lem}
	For any subset $Z\subset X$, if $\overline{\mathrm{dim}}_BY<\infty$ and $\nu\in \mathcal{H}_Y$, then
	$$\overline{\mathrm{dim}}_B(\mathrm{supp}\nu)+\overline{\mathrm{umdim}}_Z \left(G, d, \nu ^\mathbb{N}\right) \le \overline{\mathrm{umdim}}_{Y^{\mathbb{N}}\times Z}\left( F, D\right).$$
\end{lem}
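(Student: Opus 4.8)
\emph{Reduction.} The plan is to compare exponential growth rates of separated sets on the two sides. On the $G$-side, the computation preceding this lemma gives
\[
\overline{\mathrm{umdim}}_Z(G,d,\nu^{\mathbb N})=\limsup_{\varepsilon\to0}\limsup_{n\to\infty}\frac{1}{n\log\frac1\varepsilon}\,\log\int_{Y^n}s(w,\varepsilon,Z,G)\,d\nu^n(w),
\]
since $s(\omega|_{[1,n]},\varepsilon,Z,G)$ depends on $\omega$ only through $\omega|_{[1,n]}$ and $\nu^{\mathbb N}$ projects to $\nu^n$ on the first $n$ coordinates; also $\overline{\mathrm{dim}}_B(\mathrm{supp}\nu)=\limsup_{\varepsilon\to0}\frac{\log N^\nu(\varepsilon)}{\log\frac1\varepsilon}$, where $N^\nu(\varepsilon)$ is the maximal cardinality of an $\varepsilon$-separated subset of $\mathrm{supp}\nu$. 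On the $F$-side, since $F$ is a single map, the $u$-upper metric mean dimension of \cite{MR4216094} equals (arguing as in Proposition \ref{Proposition1}) $\limsup_{\varepsilon\to0}\limsup_{n\to\infty}\frac{\log\Lambda^B(Y^{\mathbb N}\times Z,n,\varepsilon,F,D)}{n\log\frac1\varepsilon}$, and any cover of $Y^{\mathbb N}\times Z$ by $(n,\varepsilon)$-Bowen balls of $F$ contains at least one ball per point of any $(n,2\varepsilon)$-separated set, so $\Lambda^B(Y^{\mathbb N}\times Z,n,\varepsilon,F,D)\ge s(n,2\varepsilon,Y^{\mathbb N}\times Z,F)$. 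Thus it suffices to bound $\limsup_{\varepsilon\to0}\limsup_{n\to\infty}\frac{1}{n\log\frac1\varepsilon}\log s(n,\varepsilon,Y^{\mathbb N}\times Z,F)$ below by the left-hand side.

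\emph{Fibrewise construction.} Fix a small $\varepsilon>0$ and $n\in\mathbb N$, let $\Gamma\subset\mathrm{supp}\nu$ be a maximal $4\varepsilon$-separated set, so $|\Gamma|=N^\nu(4\varepsilon)$, and for each word $w=i_1\cdots i_n\in\Gamma^n$ pick a maximal $(w,\varepsilon)$-separated set $E_w\subset Z$, with $\sharp E_w=s(w,\varepsilon,Z,G)$, and a fixed extension $\widehat w\in Y^{\mathbb N}$ with $\widehat w|_{[1,n]}=w$. Using $F^j(\omega,x)=(\sigma^j\omega,f_{\overline{\omega|_{[1,j]}}}(x))$ one checks directly that $\mathcal E:=\{(\widehat w,x):w\in\Gamma^n,\ x\in E_w\}$ is $(n+1,\varepsilon)$-separated for $(F,D)$: two points with the same underlying word are separated in the $X$-coordinate because $d_w>\varepsilon$ on $E_w$; two points with words $w\neq w'$ are separated already in the $Y^{\mathbb N}$-coordinate at the iterate corresponding to the first letter where $w$ and $w'$ differ, since there $d_Y>4\varepsilon$ and hence $d'>2\varepsilon$. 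Consequently
\[
s(n+1,\varepsilon,Y^{\mathbb N}\times Z,F)\ \ge\ \sum_{w\in\Gamma^n}s(w,\varepsilon,Z,G)\ =\ |\Gamma|^{\,n}\cdot\frac{1}{|\Gamma|^{\,n}}\sum_{w\in\Gamma^n}s(w,\varepsilon,Z,G).
\]

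\emph{Homogeneity and limits.} Because $\Gamma$ is a maximal $4\varepsilon$-separated subset of $\mathrm{supp}\nu$, the balls $\{B(i,2\varepsilon)\}_{i\in\Gamma}$ are pairwise disjoint while $\{B(i,4\varepsilon)\}_{i\in\Gamma}$ covers $\mathrm{supp}\nu$; combined with the homogeneity constant $L$ of $\nu$ this gives $\frac{1}{L|\Gamma|}\le\nu(B(y,2\varepsilon))\le\frac{L}{|\Gamma|}$ for every $y\in\mathrm{supp}\nu$. Choosing a measurable partition $\{P_i\}_{i\in\Gamma}$ of $\mathrm{supp}\nu$ with $P_i\subset B(i,4\varepsilon)$ and comparing, letter by letter, $s(w,\varepsilon,Z,G)$ for $w\in\Gamma^n$ with $s(v,C\varepsilon,Z,G)$ for $v\in P_{i_1}\times\cdots\times P_{i_n}$ (a universal $C$), one obtains, as in Carvalho et al. \cite{MR4348410}, $\frac{1}{|\Gamma|^{\,n}}\sum_{w\in\Gamma^n}s(w,\varepsilon,Z,G)\ge L^{-n}\int_{Y^n}s(v,C\varepsilon,Z,G)\,d\nu^n(v)$. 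Inserting this into the previous display, taking $\frac1n\log$, then $\limsup_{n\to\infty}$, then dividing by $\log\frac1\varepsilon$ and taking $\limsup_{\varepsilon\to0}$: the factor $|\Gamma|^{\,n}=N^\nu(4\varepsilon)^{\,n}$ contributes $\overline{\mathrm{dim}}_B(\mathrm{supp}\nu)$; the factor $L^{-n}$ contributes $-\log L$, which vanishes after division by $\log\frac1\varepsilon\to\infty$; the change $\varepsilon\mapsto C\varepsilon$ is likewise immaterial after the same division; and the remaining integral contributes $\overline{\mathrm{umdim}}_Z(G,d,\nu^{\mathbb N})$. This yields the asserted inequality.

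\emph{The main difficulty.} The only non-routine step is the letter-by-letter comparison above: a perturbation of a single letter of a word of length $n$ is propagated by up to $n$ compositions of generators, so a priori the discrepancy between the metrics $d_w$ and $d_v$ for coordinatewise-close words $w,v$ is controlled only by a modulus that may deteriorate as $n\to\infty$. Making this comparison uniform in $n$ — which is exactly where the hypotheses $\overline{\mathrm{dim}}_BY<\infty$ (so that $|\Gamma|=N^\nu(4\varepsilon)$ stays finite with controlled growth in $\varepsilon$) and $\nu$ homogeneous enter — is the substance of the argument of Carvalho et al. \cite{MR4348410} that we follow; all remaining manipulations are bookkeeping with the Carath\'{e}odory--Pesin critical values.
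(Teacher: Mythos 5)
The paper itself does not supply a proof of this lemma---it states that ``the proof of these two lemmas is similar to that of Carvalho et al. \cite{MR4348410}'' and omits it---so a line-by-line comparison is impossible; I can only assess your argument on its own terms.

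Your \emph{Fibrewise construction} is correct: with $\Gamma\subset\operatorname{supp}\nu$ a maximal $4\varepsilon$-separated set and $E_w$ a maximal $(w,\varepsilon)$-separated subset of $Z$ for each $w\in\Gamma^n$, the set $\mathcal E=\{(\widehat w,x):w\in\Gamma^n,\ x\in E_w\}$ is indeed $(n+1,\varepsilon)$-separated for $(F,D)$, and so $s(n+1,\varepsilon,Y^{\mathbb N}\times Z,F)\ge\sum_{w\in\Gamma^n}s(w,\varepsilon,Z,G)$.

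The gap is in the \emph{Homogeneity and limits} step. The inequality you invoke,
\[
\frac{1}{|\Gamma|^{n}}\sum_{w\in\Gamma^n}s(w,\varepsilon,Z,G)\ \ge\ L^{-n}\int_{Y^n}s(v,C\varepsilon,Z,G)\,d\nu^n(v),
\]
rests (as you say) on a letter-by-letter comparison $s(w,\varepsilon,Z,G)\ge s(v,C\varepsilon,Z,G)$ for $v\in P_{i_1}\times\cdots\times P_{i_n}$, i.e.\ on every $(v,C\varepsilon)$-separated set being $(w,\varepsilon)$-separated. That would require $|d_w(x_1,x_2)-d_v(x_1,x_2)|$ to be controlled uniformly in $n$ whenever $w,v$ are coordinatewise $4\varepsilon$-close; but a perturbation of each letter is pushed through up to $n$ compositions of generators, and in the absence of a uniform expansion/Lipschitz hypothesis on the generators there is no uniform-in-$n$ modulus. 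Your ``Main difficulty'' paragraph identifies precisely this issue, but the resolution you propose is wrong: the hypotheses $\overline{\dim}_B Y<\infty$ and $\nu\in\mathcal H_Y$ constrain only the geometry of $Y$ and the measure $\nu$, and say nothing whatever about how the Bowen metrics $d_w$ vary with the word $w$. So the step as written is not justified, and it is not the sort of thing that ``follows as in Carvalho et al.''

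The cross-word comparison can be avoided entirely by arguing with covers instead of separated sets. Take any cover $\mathcal C=\{B^F_n((\omega_i,x_i),\varepsilon)\}$ of $Y^{\mathbb N}\times Z$ by $(n,\varepsilon)$-Bowen balls of $F$. For a fixed $\omega$ and any $B\in\mathcal C$ with $B\cap(\{\omega\}\times Z)\neq\emptyset$, pick $x'_B$ with $(\omega,x'_B)\in B$; the triangle inequality then gives $B\cap(\{\omega\}\times Z)\subset\{\omega\}\times B_{\omega|_{[1,n-1]}}(x'_B,2\varepsilon)$. Thus the trace of $\mathcal C$ on the fiber over $\omega$ is a cover of $Z$ by $(\omega|_{[1,n-1]},2\varepsilon)$-Bowen balls of $G$ computed under the \emph{same} word $\omega$, so $\sharp\{B\in\mathcal C: B\cap(\{\omega\}\times Z)\neq\emptyset\}\ge\Lambda^B_{\omega|_{[1,n-1]}}(Z,n-1,2\varepsilon,G,d)$. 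Integrating over $\nu^{\mathbb N}$, using Fubini, and bounding $\nu^{\mathbb N}\bigl\{\omega:B^\sigma_n(\omega_0,\varepsilon)\ni\omega\bigr\}$ by $(L/N^\nu(c\varepsilon))^n$ via homogeneity then produces
\[
\Lambda^B(Y^{\mathbb N}\times Z,n,\varepsilon,F,D)\ \ge\ L^{-n}\,N^\nu(c\varepsilon)^{\,n}\,\Lambda^B(Z,n-1,2\varepsilon,G,d,\nu^{\mathbb N}),
\]
from which the lemma follows after the usual $\limsup$ bookkeeping. I recommend rewriting the last step along these lines, since in that form the homogeneity hypothesis does exactly the job you need, and no comparison between $d_w$ and $d_v$ for different words is ever invoked.
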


\hspace{4mm}
Therefore, we establish the proof of Theorem \ref{theorem4} (\ref{theorem4-1}).  Furthermore, we can get the following more general than Theorem \ref{theorem4} (\ref{theorem4-2}) similar to Proposition 4.2 in  \cite{MR4348410}.
\begin{lem}\label{lem1}
	For any subset $Z\subset X$, if $\overline{\mathrm{dim}}_BY<\infty$ and $\nu \in \mathcal{H}_Y$, then
	$$
	\overline{\mathrm{dim}}_B(\operatorname{supp} \nu)+\overline{\mathrm{umdim}}_Z\left(G, d,\nu^\mathbb{N}\right)=\overline{\mathrm{umdim}}_{(\mathrm{supp} \nu )^{\mathbb{N}}\times Z}\left(F, D\right)
	$$
\end{lem}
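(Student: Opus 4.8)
The plan is to establish the two inequalities separately; ``$\le$'' will be immediate from the previous lemma, and ``$\ge$'' carries all the content. Since $\nu^{\mathbb{N}}$ is carried by $(\mathrm{supp}\nu)^{\mathbb{N}}$, the quantity $\overline{\mathrm{umdim}}_Z(G,d,\nu^{\mathbb{N}})$ is unchanged if we replace the index set $Y$ by $\mathrm{supp}\nu$ and the generating family by $\{f_y:y\in\mathrm{supp}\nu\}$; likewise $(\mathrm{supp}\nu)^{\mathbb{N}}\times X$ is $F$-forward invariant, so $\overline{\mathrm{umdim}}_{(\mathrm{supp}\nu)^{\mathbb{N}}\times Z}(F,D)$ may be computed inside the subsystem $F|_{(\mathrm{supp}\nu)^{\mathbb{N}}\times X}$. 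Hence we may assume $\mathrm{supp}\nu=Y$, and then ``$\le$'' is exactly the previous lemma (the hypotheses $\overline{\mathrm{dim}}_B Y<\infty$ and $\nu\in\mathcal{H}_Y$ hold by assumption). It therefore remains to prove
$$\overline{\mathrm{umdim}}_{Y^{\mathbb{N}}\times Z}(F,D)\ \le\ \overline{\mathrm{dim}}_B Y+\overline{\mathrm{umdim}}_Z(G,d,\nu^{\mathbb{N}}).$$

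I would prove this reverse inequality by a Bowen-ball counting argument, rewriting all three terms accordingly. For the single map $F$ on $Y^{\mathbb{N}}\times X$, the Bowen-ball form of the Carath\'eodory--Pesin structure of Section \ref{Upper} (in the setting of Cheng et al. \cite{MR4216094}) gives $\overline{\mathrm{umdim}}_{Y^{\mathbb{N}}\times Z}(F,D)=\limsup_{\eta\to0}\limsup_{n\to\infty}\frac{\log r(n,\eta,Y^{\mathbb{N}}\times Z,F)}{n\log(1/\eta)}$, where $r(n,\eta,\cdot,F)$ is the least cardinality of an $(n,\eta)$-spanning set for $F$; the identity for $\overline{\mathrm{umdim}}_Z(G,d,\nu^{\mathbb{N}})$ derived above reads $\overline{\mathrm{umdim}}_Z(G,d,\nu^{\mathbb{N}})=\limsup_{\varepsilon\to0}\limsup_{n\to\infty}\frac{\log\int_{Y^{\mathbb{N}}}r(\omega|_{[1,n]},\varepsilon,Z,G)\,d\nu^{\mathbb{N}}(\omega)}{n\log(1/\varepsilon)}$; and, by an argument as in \cite{MR4348410}, the metric mean dimension of the shift $(Y^{\mathbb{N}},\sigma,d')$ equals $\overline{\mathrm{dim}}_B Y$, that is $\limsup_{\eta\to0}\limsup_{n\to\infty}\frac{\log r(n,\eta,Y^{\mathbb{N}},\sigma)}{n\log(1/\eta)}=\overline{\mathrm{dim}}_B Y$.

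The core step is to build an efficient $(n,\eta)$-spanning set of $Y^{\mathbb{N}}\times Z$ for $F$. Fix a minimal $(n,\tfrac{\eta}{2})$-spanning set $\{\omega_k\}_{k\in K}$ of $Y^{\mathbb{N}}$ for $\sigma$, chosen so that the $\nu^{\mathbb{N}}$-masses of the associated $(n,\tfrac{\eta}{2})$-$\sigma$-Bowen balls $B^\sigma_n(\omega_k,\tfrac{\eta}{2})$ are comparable (possible by the product form of $\nu^{\mathbb{N}}$ and the homogeneity of $\nu$), and for each $k$ a minimal $(\omega_k|_{[1,n]},\tfrac{\eta}{2})$-spanning set of $Z$ for $G$, of cardinality $r(\omega_k|_{[1,n]},\tfrac{\eta}{2},Z,G)$. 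Using $D=\max\{d',d\}$, the identity $F^j(\omega,x)=(\sigma^j\omega,f_{\overline{\omega|_{[1,j]}}}(x))$, and the uniform continuity of $(y,x)\mapsto f_y(x)$ on $Y\times X$, one checks as in \cite{MR4348410} that the products of these $\sigma$- and $G$-Bowen balls lie in $(n,\eta)$-Bowen balls of $F$ and together cover $Y^{\mathbb{N}}\times Z$, whence $r(n,\eta,Y^{\mathbb{N}}\times Z,F)\le\sum_{k\in K}r(\omega_k|_{[1,n]},\tfrac{\eta}{2},Z,G)$. Homogeneity then furnishes a constant $c=c(n,\eta)>0$ with $\nu^{\mathbb{N}}(B^\sigma_n(\omega_k,\tfrac{\eta}{2}))\ge c\,|K|^{-1}$ and $\frac{1}{n\log(1/\eta)}\log\frac{1}{c}\to0$ (first $n\to\infty$, then $\eta\to0$), and uniform continuity lets one dominate $r(\omega_k|_{[1,n]},\tfrac{\eta}{2},Z,G)$ by $r(\omega|_{[1,n]},\tfrac{\eta}{4},Z,G)$ for all $\omega\in B^\sigma_n(\omega_k,\tfrac{\eta}{2})$, so that $\sum_{k\in K}r(\omega_k|_{[1,n]},\tfrac{\eta}{2},Z,G)\le\frac{|K|}{c}\int_{Y^{\mathbb{N}}}r(\omega|_{[1,n]},\tfrac{\eta}{4},Z,G)\,d\nu^{\mathbb{N}}(\omega)$. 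Applying $\frac{1}{n\log(1/\eta)}\log(\cdot)$, then $\limsup_{n\to\infty}$ and $\limsup_{\eta\to0}$, and substituting the three reformulations above gives the desired inequality.

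The step I expect to be the main obstacle is the gluing used above: verifying that a product of a $\sigma$-Bowen ball with a $G$-Bowen ball is contained in an $F$-Bowen ball of comparable size, equivalently that the composition $f_{\overline{\omega|_{[1,j]}}}$ does not drift too far as $\omega$ runs over a thin $(n,\eta)$-$\sigma$-Bowen ball. This is delicate precisely because the $f_y$ need not be Lipschitz, so a naive telescoping estimate can compound along the $n$-step orbit; it is handled by the argument of \cite{MR4348410}, which also forces the passage from scale $\eta$ to scales $\eta/2$ and $\eta/4$. A secondary, purely bookkeeping point is turning the sum $\sum_k r(\omega_k|_{[1,n]},\cdot,Z,G)$ into the $\nu^{\mathbb{N}}$-integral, where the homogeneity of $\nu$ is indispensable; the non-compactness of $Z$ causes no additional difficulty, since the whole argument is cast in the level-$N$ quantities $M^B(Z,\lambda,N,\cdot)$ and $\Lambda^B(Z,N,\cdot)$ of Section \ref{Upper}, with the limits in $N$ taken before those in $n$ and $\eta$.
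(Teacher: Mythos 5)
The paper offers no argument for this lemma beyond the remark that it is ``similar to that of Carvalho et al.\ \cite{MR4348410}'', so your write-up is already more explicit than the text it replaces, and its architecture is the intended one: reduce to $\operatorname{supp}\nu=Y$ (this reduction is correct, since $\nu^{\mathbb{N}}$ charges only $(\operatorname{supp}\nu)^{\mathbb{N}}$ and $(\operatorname{supp}\nu)^{\mathbb{N}}\times X$ is $F$-invariant), obtain ``$\le$'' from the preceding lemma, identify all three quantities with spanning-number expressions, and prove $\overline{\mathrm{umdim}}_{Y^{\mathbb{N}}\times Z}(F,D)\le\overline{\mathrm{dim}}_BY+\overline{\mathrm{umdim}}_Z(G,d,\nu^{\mathbb{N}})$ by covering $Y^{\mathbb{N}}\times Z$ with products of $\sigma$-Bowen balls and $G$-Bowen balls and converting the resulting sum into a $\nu^{\mathbb{N}}$-integral via homogeneity.

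The gap is the one you half-identify, and it occurs in \emph{two} places, not one. Both the claim that the product of $B^{\sigma}_n(\omega_k,\tfrac{\eta}{2})$ with a $(\omega_k|_{[1,n]},\tfrac{\eta}{2})$-Bowen ball sits inside an $(n,\eta)$-Bowen ball of $F$, and the domination $r(\omega_k|_{[1,n]},\tfrac{\eta}{2},Z,G)\le r(\omega|_{[1,n]},\tfrac{\eta}{4},Z,G)$ for $\omega\in B^{\sigma}_n(\omega_k,\tfrac{\eta}{2})$, require comparing $f_{\overline{\omega|_{[1,j]}}}(x)$ with $f_{\overline{\omega_k|_{[1,j]}}}(x)$ for two words whose letters are merely $d_Y$-close of order $\eta$. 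Uniform continuity of $(y,x)\mapsto f_y(x)$ controls a single composition step, but the discrepancy then passes through the remaining maps in the word, each of which is only continuous and may magnify it arbitrarily; for fixed $\eta$ the accumulated error is not controlled uniformly in $j\le n$, and it cannot be hidden in the $\log(1/\eta)$ normalization. (When $Y$ is finite the problem evaporates, because letters that are $\eta$-close are equal for small $\eta$; for a general compact $Y$ this comparison is exactly the technical heart of the cited result.) So asserting that the second occurrence ``follows from uniform continuity'' is incorrect as stated, and deferring the first to ``the argument of \cite{MR4348410}'' leaves the only nontrivial content of the lemma --- the reverse inequality, which is precisely what this lemma adds to the previous one --- unproved. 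To close the proof you must either reproduce the actual device of \cite{MR4348410} for handling nearby words and verify that it survives the passage from $Z=X$ to an arbitrary (non-compact, non-invariant) subset $Z$, or impose an equicontinuity hypothesis on the family of compositions $\{f_w\}_{w\in\mathcal{Y}}$.
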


\subsection{The proof of Theorem \ref{theorem5}}
In order to prove Theorem \ref{theorem5}, we first need the following auxiliary result.
\begin{lem}\label{lemma1}
	If $G$ satisfies the gluing orbit property, then the skew product map $F$ corresponding to the maps $G_1=\{f_y:y\in Y\}$ has the gluing orbit property.
\end{lem}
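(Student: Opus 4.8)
The plan is to verify the (single‑map) gluing orbit property directly for $F$, viewed as a continuous self‑map of the compact metric space $(Y^{\mathbb{N}}\times X,D)$, by decomposing a Bowen ball of $F$ into a shift coordinate and an $X$‑fibre coordinate: the fibre part will be supplied by the gluing orbit property of $G$, and the shift part will be handled by hand, exploiting that the metric $d'$ on $Y^{\mathbb{N}}$ is summable along coordinates.

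First I would record the elementary estimate that, since $d'(\omega,\omega')=\sum_{j\ge 1}d_Y(i_j,i_j')/2^{j}\le \mathrm{diam}(Y)\,2^{-L}$ whenever $\omega$ and $\omega'$ agree on their first $L$ symbols, for every $\varepsilon>0$ there is an integer $L=L(\varepsilon)\ge 1$ such that agreement on the first $L$ coordinates forces $d'$‑distance $<\varepsilon$. Then I would unwind $F^{i}(\omega,x)=(\sigma^{i}\omega,\,f_{\overline{\omega|_{[1,i]}}}(x))$ together with the relation $f_{\overline{uv}}=f_{\overline v}\circ f_{\overline u}$, to see that membership of $(\omega,x)$ in the $(n,\varepsilon)$‑Bowen ball $B^{F}_{n}((\omega^{(j)},x_{j}),\varepsilon)$ of $F$ splits into: (a) a shift condition $d'(\sigma^{i}\omega,\sigma^{i}\omega^{(j)})<\varepsilon$ for $0\le i<n$; and (b), once the relevant symbols of $\omega$ have been forced to coincide with those of $\omega^{(j)}$, an $X$‑fibre condition $y\in B_{\omega^{(j)}|_{[1,n]}}(x_{j},\varepsilon)$, where $y$ is the image of $x$ under the composition read off the earlier symbols of $\omega$. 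Writing out the intersection that defines the gluing orbit property of $F$ and choosing the ``gap words'' to be appropriate sub‑blocks of the $\omega^{(j)}$'s, one sees that its $X$‑coordinate is exactly the intersection appearing in the gluing orbit property of $G$, for the same $\varepsilon$ and matching data.

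With this in place I would run the construction as follows. Given $\varepsilon>0$, set $L=L(\varepsilon)$, let $\mathfrak{p}_{G}(\varepsilon)$ be the constant in the gluing orbit property of $G$, and put $\mathfrak{p}_{F}(\varepsilon):=L-1+\mathfrak{p}_{G}(\varepsilon)$. Given $(\omega^{(1)},x_{1}),\dots,(\omega^{(k)},x_{k})$ and times $n_{1},\dots,n_{k}$, I would apply the gluing orbit property of $G$ to $x_{1},\dots,x_{k}$ but with the \emph{inflated} times $n_{j}':=n_{j}+L-1$ for $1\le j\le k-1$ and $n_{k}':=n_{k}$, and with the words $w_{(n_{j}')}:=\omega^{(j)}|_{[1,n_{j}']}$; this yields gap lengths $0\le p_{1},\dots,p_{k-1}\le\mathfrak{p}_{G}(\varepsilon)$ for which, for \emph{every} choice of gap words, the associated $G$‑intersection is nonempty. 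I would then choose the gap words to be $w_{(p_{j})}:=\omega^{(j)}|_{[n_{j}'+1,\,n_{j}'+p_{j}]}$, pick a point $x\in X$ in the resulting intersection, set $P_{j}:=(L-1)+p_{j}\in[L-1,\,\mathfrak{p}_{F}(\varepsilon)]$, and build $\omega\in Y^{\mathbb{N}}$ by concatenation: on the block of length $n_{j}'+p_{j}$ starting at position $n_{1}+P_{1}+\dots+n_{j-1}+P_{j-1}+1$ place the string $\omega^{(j)}|_{[1,\,n_{j}'+p_{j}]}$ (for $j<k$), and let the terminal tail of $\omega$ equal $\omega^{(k)}$. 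Finally I would check that $(\omega,x)$ lies in $B^{F}_{n_{1}}((\omega^{(1)},x_{1}),\varepsilon)\cap\bigcap_{j=2}^{k}F^{-(n_{1}+P_{1}+\dots+n_{j-1}+P_{j-1})}B^{F}_{n_{j}}((\omega^{(j)},x_{j}),\varepsilon)$: the fibre conditions hold because $x$ and the chosen gap words came out of the gluing orbit property of $G$ and $\omega^{(j)}|_{[1,i]}$ is a prefix of $w_{(n_{j}')}$ for $i<n_{j}\le n_{j}'$, while the shift conditions hold because, throughout the block carrying $x_{j}$, the sequences $\sigma^{i}\omega$ and $\sigma^{i}\omega^{(j)}$ agree on at least their first $L+p_{j}\ge L$ coordinates for every $0\le i<n_{j}$.

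The hard part, and really the only obstacle, is the coupling between the two coordinates: the gap lengths that the gluing orbit property of $G$ returns may be as small as $0$, which is far too short to keep the shifted sequences $d'$‑close near the end of a block, where $\sigma^{i}\omega$ already exhibits the symbols of the \emph{next} block rather than those of $\omega^{(j)}$. Inflating the times fed to $G$ by $L-1$ — equivalently, reserving an $(L-1)$‑symbol buffer that still follows $\omega^{(j)}$ and absorbing it into the $F$‑gap — is exactly what repairs this, at the mild cost of passing from gap constant $\mathfrak{p}_{G}(\varepsilon)$ to $L-1+\mathfrak{p}_{G}(\varepsilon)$. Everything else is routine bookkeeping with the reversal $w\mapsto\overline w$, the relation $f_{\overline{uv}}=f_{\overline v}\circ f_{\overline u}$, and the harmless off‑by‑one discrepancies between the index ranges used to define the Bowen balls of $F$ and those of $G$.
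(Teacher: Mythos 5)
Your proof is correct and follows essentially the same strategy as the paper's: feed $G$ the words $\omega^{(j)}$ padded by a buffer of fixed length (your $L-1$, the paper's $\lceil-\log\delta\rceil$) so that the shift coordinate stays $d'$-close near the end of each block, absorb that buffer into the $F$-gap (giving $\mathfrak{p}_F = \mathfrak{p}_G + \text{buffer}$), take gap words from the corresponding stretch of $\omega^{(j)}$, concatenate, and verify both coordinates of the $D$-Bowen balls. The only cosmetic differences are that you do not inflate the final time $n_k$ (harmless, since the tail of $\omega$ just continues with $\omega^{(k)}$) and you apply $G$'s gluing at tolerance $\varepsilon$ rather than at the rescaled $\delta=\varepsilon/(2\,\mathrm{diam}\,Y)$ the paper uses; your choice is in fact the cleaner one.
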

\begin{proof}
	Let $\varepsilon>0$ and set $\delta=\frac{\varepsilon}{2\cdot\mathrm{diam} Y}$. Then $\sum_{j>\lceil -\log\delta\rceil}2^{-j}\cdot \mathrm{diam} Y <\varepsilon$. Let $\mathfrak{p}_G(\delta)$ be the positive integer in the definition of gluing orbit property of $G$ (see Definition \ref{definition5}). 
	Let $(\omega_1,x_1),\cdots,(\omega_k,x_k)\in Y^{\mathbb{N}}\times X$ and $n_1,\cdots,n_k$ be given. 	Let 
	$$
	\begin{aligned}
		w_{(n_1)}&:=\omega_1|_{[1,\, n_1+\lceil -\log\delta\rceil]},\\
		w_{(n_2)}&:=\omega_2|_{[1,\, n_2+\lceil -\log\delta\rceil]},\\
		&\cdots\\
		w_{(n_k)}&:=\omega_k|_{[1,\, n_k+\lceil -\log\delta\rceil]}.
	\end{aligned}
	$$
	Assume that the integers $0\le p_1^G,\cdots,p_{k-1}^G\le \mathfrak{p}_G(\delta)$ satisfy the Definition \ref{definition5}. Pick
	$$
	\begin{aligned}
		w_{(p_1^G)}&:=\omega_1|_{[n_1+\lceil -\log\delta\rceil+1,\, n_1+\lceil -\log\delta\rceil+p_1^G]},\\
		w_{(p^G_2)}&:=\omega_2|_{[n_2+\lceil -\log\delta\rceil+1,\, n_2+\lceil -\log\delta\rceil+p_2^G]},\\
		&\cdots\\
		w_{(p_{k-1}^G)}&:=\omega_{k-1}|_{[n_{k-1}+\lceil -\log\delta\rceil+1, \, n_{k-1}+\lceil -\log\delta\rceil+p_{k-1}^G]}.
	\end{aligned}
	$$
	Then there exists $y\in X$ such that $d_{w_{(n_1)}}(y,x_1)\le\delta$ and 
	$$
	d_{w_{(n_j)}}(f_{\overline{w_{(p_{j-1}^G)}w_{(n_{j-1})}\cdots w_{(p_1^G)}w_{(n_1)}}}(y),x_j)\le \delta,\quad\text{for all }2\le j\le k.
	$$
	Consider
	$$
	\omega:=w_{(n_1)}w_{(p_1^G)}w_{(n_2)}w_{(p_2^G)}\cdots w_{(n_{k-1})}w_{(p_{k-1}^G)}w_{(n_k)}\cdots\in Y^\mathbb{N}.
	$$
	Let $\mathfrak{p}_F(\varepsilon):=\mathfrak{p}_G(\delta)+\lceil -\log\delta\rceil$ and 
	$$
	p^F_1:=p_1^G+\lceil -\log\delta\rceil,\, p^F_2:=p_2^G+\lceil -\log\delta\rceil, \cdots,p^F_{k-1}:=p_{k-1}^G+\lceil -\log\delta\rceil.
	$$
	It is easy to see that $0\le p_1^F,p_2^F,\cdots, p_{k-1}^F\le\mathfrak{p}_F(\varepsilon)$. Hence,
	$$
	\begin{aligned}
		D\left( F^i(\omega,y),F^i(\omega_1,x_1)\right)&=\max\left\{ d^\prime (\sigma^i(\omega),\sigma^i(\omega_1)), d(f_{\overline{\omega|_{[1,i]}}}(y),f_{\overline{\omega_1|_{[1,i]}}}(x_1))\right\}\\
		&=\max\left\{ d^\prime (\sigma^i(\omega),\sigma^i(\omega_1)), d(f_{\overline{w_{(1)}|_{[1,i]}}}(y),f_{\overline{w_{(1)}|_{[1,i]}}}(x_1))\right\}\\
		&\le\varepsilon,
	\end{aligned}
	$$
	for all $0\le i\le n_1-1$, and 
	$$
	\begin{aligned}
		&D\left( F^{M_{j-1}+i}(\omega,y),F^i(\omega_j,x_j)\right)\\
		=&\max\left\{ d^\prime (\sigma^{M_{j-1}+i}(\omega),\sigma^i(\omega_j)),d(f_{\overline{\omega|_{[1,M_{j-1}+i]}}}(y),f_{\overline{\omega_j|_{[1,i]}}}(x_j))  \right\}\\
		=&\max\left\{ d^\prime (\sigma^{M_{j-1}+i}(\omega),\sigma^i(\omega_j)),d(f_{\overline{w_{(j)}|_{[1,i]}w_{(p_{j-1}^G)}w_{(j-1)}\cdots w_{(p_1^G)}w_{(1)}}}(y),f_{\overline{w_{(j)}|_{[1,i]}}}(x_j))  \right\}\\
		\le &\varepsilon,
	\end{aligned}
	$$
	where $M_{j-1}:= n_1+p_1^F+\cdots +n_{j-1}+p^F_{j-1}$, for all $2\le j \le k$ and $0\le i\le n_j-1$.
\end{proof}
\begin{proof}[Proof of Theorem \ref{theorem5}]
	Suppose $F: Y^{\mathbb{N}} \times X \rightarrow Y^{\mathbb{N}} \times X$ is the skew product transformation corresponding to the maps $G_1=\{f_y:y\in Y\}$. Define a function $\psi: Y^\mathbb{N} \times X \rightarrow \mathbb{R}^d$ such that for any $\omega=\left(i_1 i_2 \cdots\right) \in Y^\mathbb{N}$, the map $\psi$ satisfies $\psi(\omega, x)=\varphi(x)$, then $\psi \in C\left(Y^\mathbb{N} \times X, \mathbb{R}^d\right)$. Let
	$$
	I_\psi(F):=\left\{(\omega, x) \in Y^\mathbb{N} \times X: \lim _{n \rightarrow \infty} \frac{1}{n} \sum_{j=0}^{n-1} \psi\left(F^j(\omega, x)\right) \text { does not exist }\right\}.
	$$
	From Lemma \ref{lemma1}, $F$ has the gluing orbit property. In \cite{MR4308163}, the authors proved the following result:
	\begin{center}
		(a) either $I_\psi(F)=\emptyset$; or (b) $\overline{\mathrm{mdim}}_{I_\psi(F)}(F,D)=\overline{\mathrm{mdim}}_M(Y^\mathbb{N} \times X, F,D)$.
	\end{center}
	We just consider the case of $I_{\varphi}(G) \neq \emptyset$. For any $(\omega,x) \in Y^\mathbb{N}\times X$, it's easy to see that
	$$
	\psi\left(F^j(\omega, x)\right)=\varphi\left(f_{\overline{\omega|_{[1,j]}}}(x)\right),
	$$
	then we have
	\begin{equation}\label{equation4}
		\frac{1}{n} \sum_{j=0}^{n-1} \psi\left(F^j(\omega, x)\right)=\frac{1}{n} \sum_{j=0}^{n-1} \varphi\left(f_{\overline{\omega|_{[1,j]}}}(x)\right).
	\end{equation}
	
	For any $(\omega, x) \in I_\psi(F)$ and by (\ref{equation4}), we obtain that if 
	$$
	\lim _{n \rightarrow \infty}\frac{1}{n} \sum_{j=0}^{n-1} \varphi\left(f_{\overline{\omega|_{[1,j]}}}(x)\right)
	$$ 
	does not exist, then $x \in I_{\varphi}(G)$. So $(\omega, x) \in Y^\mathbb{N} \times I_{\varphi}(G)$. It implies that
	$$
	I_\psi(F) \subseteq Y^\mathbb{N} \times I_{\varphi}(G) \subseteq Y^\mathbb{N} \times X
	$$
	By Theorem \ref{theorem4}, we get
	\begin{equation}\label{equation6}
		\overline{\mathrm{umdim}}_{Y^\mathbb{N}\times I_{\varphi}(G)}(F,D)=\overline{\mathrm{umdim}}_{ I_{\varphi}(G)}(G,d,\mathbb{P}).
	\end{equation}
	Since $I_\psi(F) \subseteq Y^\mathbb{N} \times I_{\varphi}(G)$ and (b), we get
	\begin{equation}\label{equation7}
		\begin{aligned}
			\overline{\mathrm{mdim}}_M(Y^\mathbb{N}\times X, F,D)=\overline{\mathrm{mdim}}_{I_\psi(F)}(F,D)&\le \overline{\mathrm{mdim}}_{Y^\mathbb{N}\times I_\psi(F)}(F,D)\\
			&\le \overline{\mathrm{umdim}}_{Y^\mathbb{N}\times I_\psi(F)}(F,D).
		\end{aligned}
	\end{equation}
	From (\ref{equation5}), (\ref{equation6}) and (\ref{equation7}), we have
	$$
	\begin{aligned}
		\overline{\mathrm{dim}}_B Y +\overline{\mathrm{mdim}}_M(X,G,d, \mathbb{P})&=\overline{\mathrm{mdim}}_M(Y^\mathbb{N} \times X, F,D) \\
		&=\overline{\mathrm{mdim}}_{I_\psi(F)}(F,D)\\
		&\le \overline{\mathrm{umdim}}_{Y^\mathbb{N}\times I_\varphi(G)}(F,D)\\
		&= \overline{\mathrm{dim}}_B Y +\overline{\mathrm{umdim}}_{ I_{\varphi}(G)}(G,d,\mathbb{P}),
	\end{aligned}
	$$
	then
	$$
	\overline{\mathrm{umdim}}_{ X}(G,d,\mathbb{P})= \overline{\mathrm{mdim}}_M(X,G,d, \mathbb{P})\le \overline{\mathrm{umdim}}_{ I_{\varphi}(G)}(G,d,\mathbb{P}).
	$$
	Obviously,
	$$
	\overline{\mathrm{umdim}}_{ I_{\varphi}(G)}(G,d,\mathbb{P})\le\overline{\mathrm{mdim}}_M(X,G,d, \mathbb{P})=  \overline{\mathrm{umdim}}_{ X}(G,d,\mathbb{P}).
	$$
	Hence,
	$$
	\overline{\mathrm{umdim}}_{ I_{\varphi}(G)}(G,d,\mathbb{P})=\overline{\mathrm{mdim}}_M(X,G,d, \mathbb{P})=  \overline{\mathrm{umdim}}_{ X}(G,d,\mathbb{P}).
	$$
\end{proof}

\begin{remark}
	A similar result of the upper metric mean dimension of free semigroup actions for the whale phase in the GLW setting was obtained by Rodrigues et al. \cite{WOS:000899898700001}.
\end{remark}

	\bibliographystyle{abbrv}
	\bibliography{references-metricmeandimensions.bib} 
	
\end{document}